\def\figurename{Figure} 
\renewcommand{\fnum@figure}[1]{\figurename~\thefigure.}
\def\tablename{Table} 
\renewcommand{\fnum@table}[1]{\tablename~\thetable.}
\newtheorem{theorem}{Theorem}[section]
\newtheorem{corollary}[theorem]{Corollary}
\newtheorem{proposition}[theorem]{Proposition}
\theoremstyle{example}
\newtheorem{example}[theorem]{Example}
\theoremstyle{definition}
\newtheorem{definition}[theorem]{Definition}
\theoremstyle{remark}
\numberwithin{equation}{section}
\begin{document}

\title{\bfseries\scshape{Classification, $\alpha^k$-Derivations and Centroids of $\textbf{4}$-dimensional Complex Hom-associative Dialgebras}}

\author{\bfseries\scshape \bfseries\scshape Ahmed Zahari\thanks{e-mail address: zaharymaths@gmail.com}\\
Universit\'{e} de Haute Alsace,\\
 IRIMAS-D\'{e}partement de Math\'{e}matiques,\\
18, rue des Fr\`eres Lumi\`ere F-68093 Mulhouse, France.
}

\date{}
\maketitle


\noindent\hrulefill

\noindent {\bf Abstract.}
The basic objective of the current research paper is to investigate the structure and the algebraic varieties of Hom-associative dialgebras. We elaborate a
classification of $n$-dimensional Hom-associative dialgebras for $n\leq4$. Additionally, using the classification result of Hom-associative dialgebras,
we characterize the $\alpha$-derivations and centroids of low-dimensional Hom-associative dialgebras. Furthermore, we equally tackle certain features of derivations and
centroids in the light of associative dialgebras and compute the centroids of low-dimensional associative dialgebras.

\noindent \hrulefill

\vspace{.3in}

 \noindent {\bf AMS Subject Classification: } .

\vspace{.08in} \noindent \textbf{Keywords}:
 Hom-associative dialgebra,  Classification, Derivation, Triple system.
\vspace{.3in}
\vspace{.2in}

\pagestyle{fancy} \fancyhead{} \fancyhead[EC]{ }
\fancyhead[EL,OR]{\thepage} \fancyhead[OC]{Ahmed Zahari} \fancyfoot{}
\renewcommand\headrulewidth{0.5pt}

\emph{
\begin{center}
Dedicated to the memory of professor Marie-Hélène TUILIER, Professor of physics  and  Responsible at the University of Haute-Alsace of doctoral school who has helped
me morally and financially during my doctoral years.
\end{center}
}

\section{Introduction}

A  Hom-diassociative algebra $(\mathcal{A}, \dashv, \vdash,\alpha)$  consists of a vector space, two multiplications, and a linear self map. It may be
regarded as a deformation of an associative algebra, where the associativity condition is twisted by a linear map $\alpha$ , such that
when $\alpha=id$, the Hom-associative dialgebra degenerates to exactly a associative dialgebra.
The central focus of this work is to explore the structure of Hom-associative dialgebras.
 Let $\mathcal{A}$ be an $n$-dimensional $\mathbb{K}$-linear
 vector space and  let $\left\{e_1, e_2, \cdots, e_n\right\}$ be a basis of $\mathcal{A}$, where $\mathbb{K}$ will always be an algebraically closed field of characteristic $0$. A Hom-dialgebra structure on
$\mathcal{A}$ with products $\mu$ and $\lambda$ is determined by
$2n^3$ structure constants $\mathcal{\gamma}_{ij}^k$ and $\mathcal{\delta}_{ij}^k$,  were $e_i\dashv e_j=\sum_{k=1}^n\gamma_{ij}^ke_k,\quad e_i\vdash e_j=\sum_{k=1}^n\delta_{ij}^ke_k$
and by $\alpha$ which is given by ${n^2}$ structure constants $a_{ij}$, where $\alpha(e_i)=\sum_{j=1}^na_{ji}e_j$. Requiring the algebra structure to be Hom-diassociative
and unital  gives rise to a sub-variety $\mathcal{H}d_n$ of $k^{2n^3+n^2}$. Base changes in $A$ result in the natural transport of structure action of $GL_n(k)$ on $\mathcal{H}d_n$. Thus isomorphism classes of $n$-dimensional Hom-dialgebras are in one-to-one correspondence with the orbits of the action of $GL_n(k)$ on $\mathcal{H}d_n$.

In this paper, we tackle the problem of classification. We set forward an algorithm to compute classification. We apply the algorithm in low-dimensional cases. We obtain the classification results of two and three-dimensional complex associative dialgebras from Rikhsiboev
\cite{MRW} and revise a list of four-dimensional nilpotent diassociative algebras from Rakhimov and Fiidov \cite{WRR}. Within this framwork,
A. Zahari and I. Bakayoko studied the classification of BiHom-associative dialgebras \cite{AI}.
The classification of two and three-dimensional Hom-associative dialgebras was undertaken by A. Zahari and A. Makhlouf \cite{AZAB2} and the
classification of $3$-dimensional BiHom-associative and BiHom-bialgebras \cite{AZB} was performed by A. Zahari. Furthermore, we shall consider the class of Hom-associative dialgebras.
We shall also establish a classification of these algebras up to isomorphism in low dimension $n\leq 4$.

The paper is laid out as follows. In the first section, we identify the  basics about Hom-associative dialgebras and provide several new properties.

In section $2$, we address the structure of Hom-associative dialgebras.

Section $3$ is devoted to  the description of the algebraic varieties of Hom-diassociative algebras, and  classifications, up to isomorphism,
of two-dimensional, three-dimensional and four-Hom-associative dialgebras are introduced.

In section $4$,  we determine certain new properties of derivations and we focus upon the classification of the derivations.

Eventually, in Section $5$, we handle the classification of the centroids. In this case, the concept of derivations and centroids is notably inspired  from that of finite-dimensional algebras. The algebra of centroids plays a key role in terms of the classification problems as well as in different applications
of algebras. As far as our work is concerned, we elaborate classification results of two, three and four-dimensional Hom-associative dialgebras.
All considered algebras and vectors spaces  are supposed to be over a field $\mathbb{K}$ of characteristic zero.

\section{Structure of Hom-associative dialgebras}
\begin{definition}\label{dia}
A Hom-associative dialgebra is a $4$-truple $(\mathcal{A}, \dashv, \vdash, \alpha)$ consisting of a  linear space $\mathcal{A}$  linear maps
 $\dashv, \vdash,: \mathcal{A}\times \mathcal{A} \longrightarrow \mathcal{A}$ and  $\alpha : \mathcal{A}\longrightarrow \mathcal{A}$ satisfying, for all $x, y, z\in \mathcal{A}$,
the following conditions :
\begin{eqnarray}
(x\dashv y)\dashv\alpha(z)&=&\alpha(x)\dashv(y\dashv z),\label{eq4}\\
(x\dashv y)\dashv\alpha(z)&=&\alpha(x)\dashv(y\vdash z),\label{eq5}\\
(x\vdash y)\dashv\alpha(z)&=&\alpha(x)\vdash(y\dashv z),\label{eq6}\\
(x\dashv y)\vdash\alpha(z)&=&\alpha(x)\vdash(y\vdash z),\label{eq7}\\
(x\vdash y)\vdash\alpha(z)&=&\alpha(x)\vdash(y\vdash z).\label{eq8}
\end{eqnarray}
\end{definition}
We call $\alpha$ ( in this order ) the structure maps of $\mathcal{A}$.

If in addition, $\alpha$ is an endomorphism with respect to $\dashv$ and $\vdash$, then $\mathcal{A}$ is said to be a multiplicative Hom-dialgebra :
\begin{equation}
\alpha(x\dashv y)=\alpha(x)\dashv\alpha(y)\quad \text{and}\quad\alpha(x\vdash y)=\alpha(x)\vdash\alpha(y)
\end{equation}
for $x, y, z$ in $\mathcal{A}.$

As we are dealing only with multiplicative  Hom-associative dialgebras, we shall call them Hom-diassociative algebras for simplicity. We denote the set of all Hom-associative dialgebras by
$\mathcal{H}d$.
The kernel and the image of homomorphism is defined naturally. One of the basic problems in structure theory of algebras resides in the problem of classification. The classification implies the description of the orbits under base change linear transformations and list representatives of the orbits.
\begin{definition}
Let $(\mathcal{A}_1, \dashv_1, \vdash_1, \alpha_1)$ and $(\mathcal{A}_2, \dashv_2,\vdash_2, \alpha_2)$ be  Hom-associative dialgebras over a field
$\mathbb{K}$. Then, a homomorphism of  Hom-associative dialgebras $\mathcal{A}_1$ to $\mathcal{A}_2$ is a $\mathbb{K}$-linear mapping
$\phi : \mathcal{A}_1\longrightarrow \mathcal{A}_2$ such that
$\phi : \mathcal{A}_1\longrightarrow \mathcal{}_2$ such that
\begin{equation}
\phi(x\dashv_1 y)=\phi(x)\dashv_2 \phi(y),\,\phi(x\vdash_1 y)=\phi(x)\vdash_2\phi(y),\,\phi\circ\alpha_1(x)=\alpha_2\circ\phi(x),
\end{equation}
for all $x,y\in \mathcal{A}.$ A bijective homomorphism is said to be an isomorphism.
\end{definition}

\begin{definition}
A bar unit in $\mathcal{A}$ is an element $e\in \mathcal{A}$ such that $x\dashv e=e\vdash x=\alpha(x)=x.$
\end{definition}

\begin{definition}
A Hom-dendrifom algebra is a quadriple $(\mathcal{A}, \prec, \succ, \alpha)$ consisting of a vector pace $\mathcal{A}$ on which the operations
$\prec, \succ : \mathcal{A}\times \mathcal{A} \longrightarrow \mathcal{A}$ and  $\alpha : \mathcal{A}\longrightarrow \mathcal{A}$ are linear maps satisfying :
\begin{equation}
\begin{aligned}
(x\prec y)\prec\alpha(z)&=\alpha(x)\prec(y\prec z+y \succ z),\\
\alpha(x)\succ(y\succ z)&=(x\prec y+x\succ y)\prec\alpha(z),\\
  (x\succ y)\prec\alpha(z)&=\alpha(x)\succ(y\prec z),
\end{aligned}
\end{equation}
for all $x,y, z\in \mathcal{A}.$
\end{definition}

\begin{example}
In fact, a Hom-dendrifom algebra structure on an n-dimensional vector space $V$ with a basis $\left\{e_1, e_2,\dots, e_n\right\}$ can
be obtained through defining the products and maps of the vectors $\left\{e_1, e_2,\dots, e_n\right\}$. In 2 dimensions, we have :
$$e_2 \prec e_1=e_1,\, e_2 \prec e_2=e_1,\, e_1 \succ e_2=e_1,\,  e_2 \succ e_1=e_1,\, e_2 \succ e_2=e_1,\,\alpha(e_2)=e_1.$$
On a 3-dimensional vector space, define the following Hom-dendrifom algebra structure respectively :
$$e_1 \prec e_1=e_1,\ , e_3 \prec e_2=e_3,\, e_3 \prec e_3=e_3,\, e_2 \succ e_3=e_3,\,  e_3 \succ e_3=e_3,\,\alpha(e_1)=e_1.$$
\end{example}	
\begin{definition}
A Hom-Zinbiel algebra is a triple $(R, \circ, \alpha)$ consisting of vector space A on which $\ast : A\otimes A\longrightarrow A$ and $\alpha : A \longrightarrow A$ are linear maps satisfying
\begin{equation}
(x\circ y)\circ\alpha(z)=\alpha(x)\circ(y\circ z)+\alpha(x)\circ(z\circ y)
\end{equation}
for $x, y, y\in \mathcal{A}.$
\end{definition}
\begin{example}
Let $\left\{e_1,e_2\right\}$ be a basis of $2$-dimensional multiplicative linear space $\mathcal{A}$ over $\mathbb{K}$. The following multiplications and linear map $\alpha$ on
$\mathcal{A}$ are defined by
\begin{enumerate}
\item $e_1\circ e_2=e_1,\quad e_2\circ e_2=e_1,\quad\alpha(e_1)=e_1,\quad \alpha(e_2)=e_2.$
\item $e_1\circ e_1=e_1,\quad e_2\circ e_2=e_1,\quad\alpha(e_1)=e_1,\quad \alpha(e_2)=e_1+e_2.$
	\item $e_1\circ e_1=e_2,\quad e_2\circ e_1=-e_2,\quad\alpha(e_1)=e_1,\quad \alpha(e_2)=e_1+e_2.$
\end{enumerate}
\end{example}	

\begin{proposition}
Let R be a Hom-Zinbiel algebra and put $x \prec y=x\circ y,\quad x\succ y=y\circ x,\,x,y\in R.$ Then $(R, \prec,\succ,\alpha)$ is a Hom-dendriform algebra. Conversely,
a commutative Hom-dendriform algebra $(i.e$ a Hom-dendriform algebra for which $x\prec y=y\succ x)$ is a Hom-dendriform algebra.
\end{proposition}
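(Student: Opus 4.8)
The plan is to verify the three Hom-dendriform axioms one at a time by substituting the definitions $x\prec y=x\circ y$ and $x\succ y=y\circ x$ and reducing each identity to the single defining Hom-Zinbiel relation. First I would record that relation in the form $(x\circ y)\circ\alpha(z)=\alpha(x)\circ(y\circ z)+\alpha(x)\circ(z\circ y)$ and note that its right-hand side is symmetric under interchanging the last two arguments $y$ and $z$. This symmetry is the mechanism that will make the two ``order-reversing'' dendriform axioms collapse onto one and the same Zinbiel relation, so that no further hypothesis is needed.

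For the axiom $(x\prec y)\prec\alpha(z)=\alpha(x)\prec(y\prec z+y\succ z)$ the substitution is immediate: the left side is $(x\circ y)\circ\alpha(z)$ and the right side is $\alpha(x)\circ(y\circ z+z\circ y)$, so the identity is literally the Hom-Zinbiel relation. For the axiom $(x\succ y)\prec\alpha(z)=\alpha(x)\succ(y\prec z)$ I would rewrite both sides as $(y\circ x)\circ\alpha(z)$ and $(y\circ z)\circ\alpha(x)$ respectively; expanding each by the Zinbiel relation, both reduce to the common symmetric sum $\alpha(y)\circ(x\circ z)+\alpha(y)\circ(z\circ x)$, giving equality. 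The axiom $(x\prec y+x\succ y)\succ\alpha(z)=\alpha(x)\succ(y\succ z)$ is handled the same way: the left side becomes $\alpha(z)\circ(x\circ y)+\alpha(z)\circ(y\circ x)$ while the right side $(z\circ y)\circ\alpha(x)$ expands, via the Zinbiel relation, to $\alpha(z)\circ(y\circ x)+\alpha(z)\circ(x\circ y)$, and the two agree. Because the definition of $\succ$ reverses the order of its arguments, the only care needed is to keep track of which variable carries the twisting map $\alpha$ after each substitution.

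For the converse I would start from a Hom-dendriform algebra satisfying the commutativity hypothesis $x\prec y=y\succ x$ and set $x\circ y:=x\prec y$. The hypothesis then yields $x\succ y=y\prec x=y\circ x$, hence $y\prec z+y\succ z=y\circ z+z\circ y$. Substituting this into the first Hom-dendriform axiom $(x\prec y)\prec\alpha(z)=\alpha(x)\prec(y\prec z+y\succ z)$ gives precisely $(x\circ y)\circ\alpha(z)=\alpha(x)\circ(y\circ z)+\alpha(x)\circ(z\circ y)$, which is the Hom-Zinbiel identity, so $(R,\circ,\alpha)$ is a Hom-Zinbiel algebra and nothing more is required.

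I expect the only genuine obstacle to be bookkeeping rather than anything conceptual: in each substitution one must make sure $\alpha$ is placed on the correct factor, since the swap inherent in $\succ$ and the asymmetric placement of $\alpha$ in the twisted relations are easy to mismatch. It is also worth flagging two apparent typographical slips in the statement that the argument silently corrects: the second Hom-dendriform axiom in Definition~\ref{dia}'s companion should terminate in $\succ\alpha(z)$ rather than $\prec\alpha(z)$, matching the classical relation $(x\prec y+x\succ y)\succ z=x\succ(y\succ z)$, and the concluding clause of the proposition should assert that a \emph{commutative} Hom-dendriform algebra is a \emph{Hom-Zinbiel} algebra; it is exactly this converse that the computation above establishes.
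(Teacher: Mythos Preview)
Your argument is correct and follows essentially the same route as the paper: both proofs verify the three Hom-dendriform identities by direct substitution of $x\prec y=x\circ y$, $x\succ y=y\circ x$ and reduction to the single Hom-Zinbiel relation, using the symmetry of its right-hand side in the inner variables for the two ``mixed'' axioms. Your treatment is in fact a bit more explicit than the paper's (you expand both sides of the middle axiom rather than asserting equality), and you also supply the converse, which the paper states but does not prove; your identification of the two typographical slips is accurate as well.
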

\begin{proof}
Indeed,
$(i)$\, $(x\prec y)\prec\alpha(z)=(x\circ y)\circ\alpha(z)$.\\
But $\alpha(x)\prec(y\prec z)+\alpha(x)\prec(y\succ y)=\alpha(x)\circ(y\circ z)+\alpha(x)\circ(z\circ y).$ Therefore, (i) holds.\\
$(ii)$\, $(x\succ y)\prec\alpha(z)=(y\circ x)\circ\alpha(z)$ and $\alpha(x)\succ (y\prec z)=(y\circ z)\circ\alpha(x).$ However, these two expressions are the same according to the axioms
of Hom-Zinbiel algebras.\\
$(iii)$\, $(x\prec y)\succ\alpha(z)+(x\succ y)\succ\alpha(z)=\alpha(z)\circ(x\circ y)+\alpha(z)\circ(y\circ x)$, which is equal to $(z\circ y)\circ\alpha(z)=\alpha(x)\succ(y\succ z).$
As a matter of fact, $(iii)$ also holds.
\end{proof}
\begin{proposition}
Let $(R,\cdot, \alpha)$ be a Hom-Zinbiel algebra. Then, the symmetrized product $x\cdot y=x\circ y+y\circ x$ is Hom-associative (,i.e., under the symmetrized product, R becomes a Hom-associative and commutative algebra.
\end{proposition}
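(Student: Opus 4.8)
The plan is to verify directly the two defining properties of a Hom-associative and commutative algebra for the symmetrized product, which I will write as $x\ast y=x\circ y+y\circ x$, where $\circ$ denotes the Hom-Zinbiel product. Commutativity is immediate from the definition, since $x\ast y=x\circ y+y\circ x=y\circ x+x\circ y=y\ast x$, so the only real content is the Hom-associativity identity $(x\ast y)\ast\alpha(z)=\alpha(x)\ast(y\ast z)$.

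First I would expand the left-hand side by bilinearity: $(x\ast y)\ast\alpha(z)$ unfolds into the four terms $(x\circ y)\circ\alpha(z)+(y\circ x)\circ\alpha(z)+\alpha(z)\circ(x\circ y)+\alpha(z)\circ(y\circ x)$. The first two terms carry $\alpha(z)$ in the outer right slot, so the Hom-Zinbiel identity applies to each and rewrites them as $\alpha(x)\circ(y\circ z)+\alpha(x)\circ(z\circ y)+\alpha(y)\circ(x\circ z)+\alpha(y)\circ(z\circ x)$. Together with the two untouched terms, the left-hand side reduces to a sum of six summands, each of the shape $\alpha(a)\circ(b\circ c)$ running over the appropriate orderings of $x,y,z$.

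Next I would expand the right-hand side the same way: $\alpha(x)\ast(y\ast z)$ unfolds into $\alpha(x)\circ(y\circ z)+\alpha(x)\circ(z\circ y)+(y\circ z)\circ\alpha(x)+(z\circ y)\circ\alpha(x)$, and now it is the last two terms to which the Hom-Zinbiel identity applies, producing $\alpha(y)\circ(z\circ x)+\alpha(y)\circ(x\circ z)+\alpha(z)\circ(y\circ x)+\alpha(z)\circ(x\circ y)$. Collecting everything, the right-hand side becomes exactly the same symmetric sum of six terms $\alpha(a)\circ(b\circ c)$ that arose on the left, so the two sides agree and Hom-associativity follows.

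The computation is essentially mechanical, so there is no serious obstacle; the only point requiring care is the bookkeeping, namely applying the Hom-Zinbiel axiom to precisely those products in which $\alpha$ occupies the outer slot (the first two terms on the left, the last two on the right) and then matching up the six residual terms after both reductions. I would also remark that multiplicativity of $\alpha$ is never invoked: the Hom-Zinbiel identity alone forces Hom-associativity of the symmetrized product.
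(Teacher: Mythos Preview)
Your proof is correct and follows essentially the same approach as the paper: both expand $(x\ast y)\ast\alpha(z)$ and $\alpha(x)\ast(y\ast z)$ into four terms and then match them using the Hom-Zinbiel identity. Your version is in fact cleaner, reducing both sides to the same symmetric six-term sum $\sum\alpha(a)\circ(b\circ c)$, whereas the paper matches the four-term expansions more directly via the Hom-Zinbiel identity together with its consequence $(y\circ x)\circ\alpha(z)=(y\circ z)\circ\alpha(x)$.
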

\begin{proof}
Indeed,

$\begin{aligned}
(x\cdot y)\alpha(z)&=(x\circ y+y\circ x)\alpha(z)\\
&=(x\circ y+y\circ x)\circ\alpha(z)+\alpha(z)\circ(x\circ y+y\circ x)\circ\alpha(z)\\
&=(x\circ y)\circ\alpha(z)+(y\circ x)\circ\alpha(z)+\alpha(z)\circ(x\circ y)+(y\circ x)\circ\alpha(z)
\end{aligned}$

and

$\begin{aligned}
\alpha(x)(y\cdot z)&=\alpha(x)(y\circ z+z\circ y)\\
&=\alpha(x)\circ(y\circ z+z\circ y)+(y\circ z+z\circ y)\alpha(x)\\
&=\alpha(x)\circ(y\circ z)+\alpha(x)\circ(z\circ y)+(y\circ z)\circ\alpha(x)+(z\circ y)\circ(x).
\end{aligned}$

Now, if we take into account Hom-Zinbiel indentity and its consequence\\
$(y\circ x)\circ\alpha(z)=(y\circ z)\alpha(z)$, then we get the following required equality $(x\cdot y)\alpha(z)=(x\cdot y)\alpha(z).$
\end{proof}

\begin{definition}
A Hom-dipterous algebra is a quadruple $(Z, \prec, \ast, \alpha)$ consisting of a vector space $\mathcal{A}$  with the operations
$\prec, \ast : \mathcal{A}\otimes \mathcal{A}\longrightarrow \mathcal{A}$ and $\alpha : \mathcal{A}\longrightarrow \mathcal{A}$ which are linear maps satisfying
\begin{equation}\label{eq01}
(x\ast y)\succ\alpha(z)=\alpha(x)\succ(y\succ z),\quad (x\ast y)\ast \alpha(z)=\alpha(x)\ast(y\ast z)
\end{equation}
for all $x,y, z\in \mathcal{A}.$
\noindent Similarly, a right Hom-dipterous algebra is defined by the following relations
\begin{equation}\label{eq02}
(x\prec y)\prec\alpha(z)=\alpha(x)\prec(y \ast z),\quad (x\ast y)\ast \alpha(z)=\alpha(x)\ast(y\ast z),
\end{equation}
for all $x,y, z\in \mathcal{A}.$
\end{definition}

\begin{example}
Let $\left\{e_1,e_2,e_3\right\}$ be a basis of $3$-dimensional multiplicative linear space $A$ over $\mathbb{K}$. The following multiplications and linear map
$\alpha$ on $\mathcal{A}$ define the structure of a Hom-dipterous algebra:
\begin{enumerate}
\item $e_1\ast e_2=e_3,\,e_3\ast e_2=e_3,\,e_3\ast e_3=e_3,\quad e_1\prec e_2=e_3,\,e_2\prec e_3=e_3,\,\alpha(e_1)=e_1.$
\item $e_1\ast e_1=e_1,e_1\ast e_2=e_3, e_3\ast e_2=e_3,e_3\ast e_3=e_3,e_1\prec e_2=e_3,e_2\prec e_3=e_3,\, \alpha(e_1)=e_1.$
\end{enumerate}
\end{example}
\begin{proposition}
Let $(\mathcal{A}, \ast, \succ)$ be a dipterous algebra and let $\alpha : \mathcal{A}\longrightarrow \mathcal{A}$ be a dipterous algebra endomorphism. Hence,
$\mathcal{A}_\alpha=(\mathcal{A},\ast_\alpha,\succ_\alpha,\alpha)$, where $\ast_\alpha=\alpha\circ\ast$ and $\succ_\alpha=\alpha\circ\succ$, is a Hom-dipterous algebra.
\end{proposition}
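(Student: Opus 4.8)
The plan is to verify the two Hom-dipterous axioms of \eqref{eq01} directly for the twisted operations $\ast_\alpha=\alpha\circ\ast$ and $\succ_\alpha=\alpha\circ\succ$, following the Yau-twist philosophy: push the twisting map $\alpha$ inward using the endomorphism property, apply the untwisted dipterous identities, and then pull $\alpha$ back out. First I would record the three ingredients to be combined. The defining identities of the underlying dipterous algebra are $(x\ast y)\succ z=x\succ(y\succ z)$ and $(x\ast y)\ast z=x\ast(y\ast z)$, which are exactly \eqref{eq01} specialized to $\alpha=\mathrm{id}$. The multiplicativity of $\alpha$, namely $\alpha(x\ast y)=\alpha(x)\ast\alpha(y)$ and $\alpha(x\succ y)=\alpha(x)\succ\alpha(y)$, holds because $\alpha$ is assumed to be a dipterous endomorphism. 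Finally, the explicit form of the twisted products is $x\ast_\alpha y=\alpha(x\ast y)$ and $x\succ_\alpha y=\alpha(x\succ y)$.

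For the associative-type axiom $(x\ast_\alpha y)\ast_\alpha\alpha(z)=\alpha(x)\ast_\alpha(y\ast_\alpha z)$, I would expand the left-hand side as $\alpha\bigl(\alpha(x\ast y)\ast\alpha(z)\bigr)$, then use multiplicativity to rewrite $\alpha(x\ast y)=\alpha(x)\ast\alpha(y)$, invoke the $\ast$-associativity of the underlying dipterous algebra to reassociate $(\alpha(x)\ast\alpha(y))\ast\alpha(z)=\alpha(x)\ast(\alpha(y)\ast\alpha(z))$, and recognize the outcome as the right-hand side once $\alpha$ is reintroduced on the inner subproduct. The mixed axiom $(x\ast_\alpha y)\succ_\alpha\alpha(z)=\alpha(x)\succ_\alpha(y\succ_\alpha z)$ follows by the identical pattern, now using the dipterous identity $(a\ast b)\succ c=a\succ(b\succ c)$ in place of $\ast$-associativity, together with the $\succ$-part of multiplicativity.

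The computation carries no genuine obstacle; it reduces to careful bookkeeping of where the single outer application of $\alpha$ and the inner occurrences land. The one point I would watch is ensuring that the endomorphism property is applied to the correct factor: in each axiom, $\alpha$ must be distributed over the already-twisted subproduct \emph{before} the untwisted dipterous identity becomes applicable, so that the twisting maps align on both sides. An off-by-one in the placement of $\alpha$ would leave the two sides formally distinct even though the underlying identity is valid, so the verification is best presented as a single chain of equalities for each axiom with the justification annotated at every step.
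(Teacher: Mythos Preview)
Your proposal is correct and follows essentially the same Yau-twist approach as the paper. The paper's presentation is marginally more streamlined---it reduces each side directly to $\alpha^2$ applied to the corresponding untwisted expression (e.g.\ $(x\ast_\alpha y)\succ_\alpha\alpha(z)=\alpha^2((x\ast y)\succ z)$ and $\alpha(x)\succ_\alpha(y\succ_\alpha z)=\alpha^2(x\succ(y\succ z))$) and then invokes the dipterous identities once---whereas you distribute $\alpha$ inward, apply the identity to $\alpha(x),\alpha(y),\alpha(z)$, and reassemble; but these are the same computation read in two directions.
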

\begin{proof}
Notice that
$$\begin{aligned}
(x\ast_\alpha y)\succ_\alpha\alpha(z)&=\alpha^2((x\ast y)\succ z)\\
\alpha(x)\succ_\alpha(y\succ_\alpha z)&=\alpha^2(x\succ (y\succ z))\\
(x\ast_\alpha y)\ast_\alpha\alpha(z)&=\alpha^2((x\ast y)\ast z)\\
\alpha(x)\ast_\alpha(y\ast_\alpha z)&=\alpha^2(x\ast(y\ast z)).
\end{aligned}$$
Similarly,
$$\begin{aligned}
(x\prec_\alpha y)\prec_\alpha\alpha(z)&=\alpha^2((x\prec y)\prec z)\\
\alpha(x)\prec_\alpha(y\ast_\alpha z)&=\alpha^2(x\prec(y\ast z))\\
(x\ast_\alpha y)\ast_\alpha\alpha(z)&=\alpha^2((x\ast y)\ast z)\\
\alpha(x)\ast_\alpha(y\ast_\alpha z)&=\alpha^2(x\ast(y\ast z)).
\end{aligned}$$
 Thus, the identities (\ref{eq01}) and (\ref{eq02}) obviously follow from the identities satisfied by $(\mathcal{A}, \ast, \succ)$.
\end{proof}

\begin{definition}
Let $(\mathcal{A}, \dashv, \vdash, \alpha)$ be a Hom-associative dialgebra. If there is an associative dialgebra $(A, \dashv', \vdash')$ such that
$\dashv'=\alpha\circ\dashv$ and $\vdash'=\alpha\circ\vdash$, we state that $(\mathcal{A}, \dashv', \vdash')$ is the untwist of $(\mathcal{A}, \dashv, \vdash, \alpha)$.
\end{definition}

\begin{proposition}\label{p11}
Let $(\mathcal{A}, \dashv, \vdash, \alpha)$ be an $n$-dimensional Hom-associative dialgebra and let $\Phi : \mathcal{A}\rightarrow \mathcal{A}$ be an invertible linear map. Then, there is an
isomorphism with an n-dimensional Hom-associative dialgebra  $(\mathcal{A}, \dashv', \vdash', \Phi\alpha\Phi^{-1})$ where
$\dashv'=\Phi\circ\dashv\circ(\Phi^{-1}\otimes\Phi^{-1}$) and $\vdash'=\Phi\circ\vdash\circ(\Phi^{-1}\otimes\Phi^{-1}$).
\end{proposition}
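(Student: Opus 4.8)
The statement is a transport-of-structure result, so the natural plan is to show that the given invertible map $\Phi$ is itself the desired isomorphism. Since the underlying space is the same $\mathcal{A}$ in both cases, dimension is preserved automatically, and I would focus on two things: first, that $\Phi$ intertwines the two structures, i.e. that it is a homomorphism from $(\mathcal{A},\dashv,\vdash,\alpha)$ to $(\mathcal{A},\dashv',\vdash',\Phi\alpha\Phi^{-1})$; and second, that this twisted quadruple genuinely satisfies the five Hom-diassociative axioms (\ref{eq4})--(\ref{eq8}) together with multiplicativity, which I would deduce by pulling each axiom back through $\Phi$.

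The homomorphism property is essentially built into the definitions of $\dashv'$ and $\vdash'$. For any $x,y\in\mathcal{A}$ one computes
\[
\Phi(x)\dashv'\Phi(y)=\Phi\circ\dashv\circ(\Phi^{-1}\otimes\Phi^{-1})\bigl(\Phi(x)\otimes\Phi(y)\bigr)=\Phi\bigl(x\dashv y\bigr),
\]
since $\Phi^{-1}\Phi=\mathrm{id}$, and the identical computation gives $\Phi(x)\vdash'\Phi(y)=\Phi(x\vdash y)$. For the structure map, $\bigl(\Phi\alpha\Phi^{-1}\bigr)\circ\Phi=\Phi\circ\alpha$, so $\Phi$ also intertwines $\alpha$ with $\alpha'=\Phi\alpha\Phi^{-1}$. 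Thus $\Phi$ satisfies exactly the three conditions in the definition of a homomorphism, and being invertible it is a bijective homomorphism.

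It then remains to verify that the target is a Hom-associative dialgebra at all. Here I would use surjectivity of $\Phi$: given arbitrary $u,v,w\in\mathcal{A}$, write $u=\Phi(x)$, $v=\Phi(y)$, $w=\Phi(z)$. Using the intertwining relations just established, each side of, say, (\ref{eq4}) becomes
\[
(u\dashv'v)\dashv'\alpha'(w)=\Phi\bigl((x\dashv y)\dashv\alpha(z)\bigr),\qquad \alpha'(u)\dashv'(v\dashv'w)=\Phi\bigl(\alpha(x)\dashv(y\dashv z)\bigr),
\]
and the original axiom (\ref{eq4}) for $(\mathcal{A},\dashv,\vdash,\alpha)$ together with injectivity of $\Phi$ forces the two to agree. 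The remaining identities (\ref{eq5})--(\ref{eq8}), as well as the two multiplicativity relations $\alpha'(u\dashv'v)=\alpha'(u)\dashv'\alpha'(v)$ and its $\vdash'$ analogue, follow by the identical substitute-and-cancel argument.

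I do not expect a serious obstacle in this argument; the only real work is bookkeeping, and since all five axioms and the multiplicativity conditions transport by the same mechanism it suffices to carry out one of them in detail and remark that the others are verbatim analogous. The conclusion is then immediate: $\Phi$ is a bijective homomorphism between the two Hom-associative dialgebras, hence an isomorphism, which is exactly the assertion.
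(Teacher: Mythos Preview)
Your proposal is correct and follows essentially the same transport-of-structure argument as the paper: verify that the twisted quadruple satisfies the Hom-diassociative axioms and multiplicativity, and that $\Phi$ is the required bijective homomorphism. The only difference is order and presentation---the paper first unfolds the definitions of $\dashv'$, $\vdash'$, $\Phi\alpha\Phi^{-1}$ directly to check several of the axioms and multiplicativity, and only at the end observes that $\Phi$ is a morphism, whereas you establish the intertwining relations $\Phi(x)\dashv'\Phi(y)=\Phi(x\dashv y)$ etc.\ first and then use them uniformly to pull every axiom back through $\Phi$; your route is a bit cleaner but not a different idea.
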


\begin{proof}
We prove that for any invertible linear map $\Phi : \mathcal{A}\rightarrow \mathcal{A}, \, (\mathcal{A}, \dashv', \vdash', \Phi\alpha\Phi^{-1})$ is a Hom-associative dialgebra.
$$\begin{array}{ll}
(x\dashv'y)\dashv'\Phi\alpha\Phi^{-1}(z)
&=(\Phi\circ(\Phi^{-1}(x)\dashv\Phi^{-1}(y)))\dashv'\Phi\circ\alpha\circ\Phi^{-1}(z)\\
&=\Phi\circ(\Phi^{-1}(x)\dashv\Phi^{-1}(y))\dashv\alpha\circ\Phi^{-1}(z)\\
&=\Phi\circ\Phi^{-1}((x)\dashv(y))\dashv\alpha(z)\\
&=\Phi\circ\Phi^{-1}(\alpha(x)\dashv(y))\vdash z)\\
&=\Phi\circ(\alpha\Phi^{-1}(x)\dashv\Phi^{-1}(y))\vdash \Phi^{-1}(z))\\
&=\Phi\circ((\Phi^{-1}\otimes\Phi^{-1})(\Phi\otimes\Phi)\alpha\Phi^{-1}(x)\dashv\Phi^{-1}(y))\vdash \Phi^{-1}(z))\\
&=\Phi\circ(\Phi\circ\alpha\Phi^{-1}(x))\dashv\Phi\circ(\Phi^{-1}(y)\vdash \Phi^{-1}(z))\\
&=\Phi\circ\alpha\Phi^{-1}(x)\dashv'(y\vdash'z).
\end{array}$$

$$\begin{array}{ll}
(x\vdash'y)\dashv'\Phi\alpha\Phi^{-1}(z)
&=\Phi\circ(\Phi^{-1}(x)\vdash\Phi^{-1}(y))\dashv'\Phi\circ\alpha\circ\Phi^{-1}(z)\\
&=\Phi\circ(\Phi^{-1}(x)\vdash\Phi^{-1}(y))\dashv\alpha\circ\Phi^{-1}(z)\\
&=\Phi\circ(\Phi^{-1}((x\vdash y)\dashv\alpha(z)))\\
&=\Phi\circ\Phi^{-1}((\alpha(x)\vdash(y\vdash z)))\\
&=\Phi\circ(\alpha\Phi^{-1}(x)\vdash(\Phi^{-1}(y)\dashv\Phi^{-1}(z)))\\
&=\Phi\circ((\Phi^{-1}\otimes\Phi^{-1})(\Phi\otimes\Phi)\alpha\Phi^{-1}(x)\vdash(\Phi^{-1}(y)\dashv\Phi^{-1}(z)))\\
&=\Phi\circ((\Phi^{-1}\otimes\Phi^{-1})\circ\Phi\alpha\Phi^{-1}(x))\vdash\Phi\circ(\Phi^{-1}(y)\dashv\Phi^{-1}(z)))\\
&=\Phi\circ\alpha\Phi^{-1}(x)\vdash'(y\dashv'z).
\end{array}$$

$$\begin{array}{ll}
(x\vdash'y)\vdash'\Phi\alpha\Phi^{-1}(z)
&=\Phi\circ(\Phi^{-1}(x)\vdash\Phi^{-1}(y))\vdash'\Phi\circ\alpha\circ\Phi^{-1}(z)\\
&=\Phi\circ(\Phi^{-1}(x)\vdash\Phi^{-1}(y))\vdash\alpha\circ\Phi^{-1}(z)\\
&=\Phi\circ(\Phi^{-1}((x\vdash y)\vdash\alpha(z)))\\
&=\Phi\circ\Phi^{-1}((x\dashv y)\vdash z)))\\
&=\Phi\circ(\Phi^{-1}(x)\dashv(\Phi^{-1}y))\vdash\Phi^{-1}(z)))\\
&=\Phi\circ((\Phi^{-1}\otimes\Phi^{-1})(\Phi\otimes\Phi)\circ(\Phi^{-1}(x)\dashv\Phi^{-1}(y))\vdash\Phi^{-1}(z)))\\
&=\Phi\circ((\Phi^{-1}\otimes\Phi^{-1})\circ\Phi(\Phi^{-1}(x)\dashv\Phi^{-1}(y))\vdash\Phi\circ\alpha\circ\Phi^{-1}(z)))\\
&=(x\dashv'y)\vdash\Phi\circ\alpha\Phi^{-1}(z).
\end{array}$$

From this perspective, $(\mathcal{A}, \dashv', \vdash', \Phi\alpha\Phi^{-1})$ is a Hom-associative dialgebra.It is also multiplicative.\\
Indeed, for $\alpha$
$$\begin{array}{ll}
\Phi\alpha\Phi^{-1}\circ(x\dashv'y)
&=\Phi\alpha\Phi^{-1}\Phi\circ x\dashv(\Phi^{-1}\otimes\Phi^{-1})(y)\\
&=\Phi\alpha\circ x\dashv(\Phi^{-1}\otimes\Phi^{-1})(y)\\
&=\Phi\alpha\circ \Phi^{-1}(x)\dashv\Phi^{-1}(y)\\
&=\Phi\circ(\alpha\Phi^{-1}(x)\dashv\alpha\Phi^{-1}(y))\\
&=\Phi\circ(\Phi^{-1}\otimes\Phi^{-1})(\Phi\otimes\Phi)(\alpha\Phi^{-1}(x)\dashv\alpha\Phi^{-1}(y))\\
&=\Phi\circ(\Phi^{-1}\otimes\Phi^{-1})(\Phi\alpha\Phi^{-1}(x)\dashv\alpha\Phi\Phi^{-1}(y))\\
&=\Phi\alpha\Phi^{-1}(x)\dashv'\Phi\alpha\Phi^{-1}(y).
\end{array}$$
\noindent It follows that
$$\begin{array}{ll}
\Phi\alpha\Phi^{-1}\circ(x\vdash'y)
&=\Phi\alpha\Phi^{-1}\Phi\circ x\vdash(\Phi^{-1}\otimes\Phi^{-1})(y)\\
&=\Phi\alpha\circ x\vdash(\Phi^{-1}\otimes\Phi^{-1})(y)\\
&=\Phi\alpha\circ \Phi^{-1}(x)\vdash\Phi^{-1}(y)\\
&=\Phi\circ(\alpha\Phi^{-1}(x)\vdash\alpha\Phi^{-1}(y))\\
&=\Phi\circ(\Phi^{-1}\otimes\Phi^{-1})(\Phi\otimes\Phi)(\alpha\Phi^{-1}(x)\vdash\alpha\Phi^{-1}(y))\\
&=\Phi\circ(\Phi^{-1}\otimes\Phi^{-1})(\Phi\alpha\Phi^{-1}(x)\vdash\alpha\Phi\Phi^{-1}(y))\\
&=\Phi\alpha\Phi^{-1}(x)\vdash'\Phi\alpha\Phi^{-1}(y).
\end{array}$$

\noindent Therefore, $\Lambda : (\mathcal{A}, \dashv, \vdash, \alpha)\rightarrow(\mathcal{A}, \dashv', \vdash',\Phi\alpha\Phi^{-1})$ is a Hom-associative
dialgebras morphism, since\\ $\Phi\circ\dashv=\Phi\circ\dashv\circ(\Phi^{-1}\otimes\Phi^{-1})\circ(\Phi\otimes\Phi)=\dashv'\circ(\Phi\otimes\Phi)$ and
$(\Phi\alpha\Phi^{-1})\circ\Phi=\Phi\circ\alpha.$
\end{proof}

\begin{proposition}
Let $(\mathcal{A}, \dashv, \vdash, \alpha)$ be a Hom-associative dialgebra over $\mathbb{K}$.\\ Let $(\mathcal{A}, \dashv', \vdash', \Phi\alpha\Phi^{-1})$ be
its isomorphic Hom-associative dialgebra described in Proposition \ref{p11}. If $\psi$ is an automorphism of $(\mathcal{A}, \dashv, \vdash, \alpha)$, then
$\Phi\psi\Phi^{-1}$ is an automorphism of $(\mathcal{A}, \dashv, \vdash, \Phi\alpha\Phi^{-1})$.
\end{proposition}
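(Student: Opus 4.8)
The plan is to verify directly that $\psi' := \Phi\psi\Phi^{-1}$ satisfies the three defining conditions of an automorphism of the transported algebra $(\mathcal{A}, \dashv', \vdash', \Phi\alpha\Phi^{-1})$ produced by Proposition \ref{p11}: bijectivity, commutation with the twisted structure map $\Phi\alpha\Phi^{-1}$, and compatibility with the twisted products $\dashv'$ and $\vdash'$. Bijectivity is immediate, since $\Phi$ is invertible and $\psi$ is bijective (being an automorphism), so the composite $\Phi\psi\Phi^{-1}$ is bijective with inverse $\Phi\psi^{-1}\Phi^{-1}$.

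For compatibility with the structure map, I would compute
$$\psi'\circ(\Phi\alpha\Phi^{-1})=\Phi\psi\Phi^{-1}\Phi\alpha\Phi^{-1}=\Phi\psi\alpha\Phi^{-1},$$
and then invoke the key relation $\psi\alpha=\alpha\psi$, which holds because $\psi$ is an automorphism of $(\mathcal{A}, \dashv, \vdash, \alpha)$. This lets me rewrite the expression as $\Phi\alpha\psi\Phi^{-1}=(\Phi\alpha\Phi^{-1})\circ\psi'$. The underlying point is that conjugation by $\Phi$ carries commuting operators to commuting operators.

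For compatibility with $\dashv'$, I would substitute the definition $\dashv'=\Phi\circ\dashv\circ(\Phi^{-1}\otimes\Phi^{-1})$ and expand both sides. On the one hand,
$$\psi'(x\dashv' y)=\Phi\psi\Phi^{-1}\bigl(\Phi(\Phi^{-1}(x)\dashv\Phi^{-1}(y))\bigr)=\Phi\bigl(\psi\Phi^{-1}(x)\dashv\psi\Phi^{-1}(y)\bigr),$$
the last equality using that $\psi$ respects $\dashv$. On the other hand, unwinding $\psi'$ and $\dashv'$ gives $\psi'(x)\dashv'\psi'(y)=\Phi\bigl(\psi\Phi^{-1}(x)\dashv\psi\Phi^{-1}(y)\bigr)$ once the inner $\Phi^{-1}\Phi$ factors cancel. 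The two expressions agree, and the computation for $\vdash'$ is identical with $\dashv$ replaced by $\vdash$.

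The argument contains no genuine obstacle; it is essentially the functoriality of transport of structure by $\Phi$, so conjugating any automorphism of $\mathcal{A}$ by $\Phi$ yields an automorphism of the transported algebra. The only thing requiring care is the bookkeeping of the factors $\Phi$ and $\Phi^{-1}$ across the tensor products, ensuring that at each step the adjacent $\Phi^{-1}\Phi$ pairs cancel correctly.
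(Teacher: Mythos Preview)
Your proposal is correct and follows essentially the same approach as the paper: both verify directly that $\Phi\psi\Phi^{-1}$ commutes with $\Phi\alpha\Phi^{-1}$ (using $\psi\alpha=\alpha\psi$) and preserves the transported products $\dashv',\vdash'$ by unwinding their definitions and invoking that $\psi$ preserves $\dashv,\vdash$. The only cosmetic difference is that the paper checks the product condition on elements of the form $\Phi(x),\Phi(y)$ (which is enough since $\Phi$ is surjective), whereas you work with arbitrary $x,y$; you also make bijectivity explicit, which the paper leaves implicit.
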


\begin{proof}
Note that $\gamma=\Phi\alpha\Phi^{-1}$. We have
$$\Phi\psi\Phi^{-1}\gamma=\Phi\psi\Phi^{-1}\Phi\alpha\Phi^{-1}=\Phi\psi\alpha\Phi^{-1}=\Phi\alpha\psi\Phi^{-1}=\Phi\alpha\Phi^{-1}\Phi\psi\Phi^{-1}=\gamma\Phi\psi\Phi^{-1}.$$
For any $x,y\in \mathcal{A},$
$$\begin{array}{ll}
\Phi\psi\Phi^{-1}\circ(\Phi(x)\dashv'\Phi(y))
&=\Phi\psi\Phi^{-1}\circ\Phi\circ(x\dashv y)\\
&=\Phi\circ\psi\circ(x\dashv y)\\
&=\Phi\circ(\psi(x)\dashv\psi(y))\\
&=\Phi\circ\psi(x)\dashv'\Phi\circ\psi(y))\\
&=\Phi\psi\Phi^{-1}\Phi(x)\dashv'\Phi\psi\Phi^{-1}\Phi(y)\\
\end{array}$$
This entails,
$$\begin{array}{ll}
\Phi\psi\Phi^{-1}\circ(\Phi(x)\vdash'\Phi(y))
&=\Phi\psi\Phi^{-1}\circ\Phi\circ(x\vdash y)\\
&=\Phi\circ\psi\circ(x\vdash y)\\
&=\Phi\circ(\psi(x)\vdash\psi(y))\\
&=\Phi\circ\psi(x)\vdash'\Phi\circ\psi(y))\\
&=\Phi\psi\Phi^{-1}\Phi(x)\vdash'\Phi\psi\Phi^{-1}\Phi(y)\\
\end{array}$$
 By Definition, $\Phi\psi\Phi^{-1}$ is an automorphism of $(\mathcal{A}, \dashv', \vdash', \Phi\alpha\Phi^{-1})$.
 \end{proof}

\section{Classification in low dimensions}
Let $\mathcal{H}d_n(K)$ denote the variety of $n$ dimensional Hom-associative dialgebras
over a field $\mathcal{A}$. If $\mathcal{A}$ is an $n$-dimensional algebra, then the product of any two elements
$x$ and $y$ can be expressed by the product of basis elements $\left\{e_1,e_2, e_3,\cdots, e_n\right\}$.
Recall that a Hom-diassociative structure on $\mathcal{A}$ can then be defined by two bilinear
mappings :  $\dashv : \mathcal{A}\times \mathcal{A}\longrightarrow \mathcal{A}$ representing the left product,
$\dashv : \mathcal{A} \times \mathcal{A}\longrightarrow \mathcal{A}$ representing the left product $\vdash$ and
$\alpha : \mathcal{A}\longrightarrow \mathcal{A}$ representing a linear map, all satisfying the
 above-mentionend identities when a Hom-associative dialgebra $\mathcal{D}$ can be regarded as a
 quadruplet $\mathcal{D}=(\mathcal{A}, \dashv, \vdash, \alpha)$ where $\dashv, \vdash$ and $\alpha$ are
 Hom-associative dialgebra laws on $\mathcal{A}.$

Let us denote by $\gamma_{ij}^k$, $\delta_{st}^q$ and $a_{ij}$, where
$i, j, k, s, t, q=1, 2, 3, \dots,n$, the structure constants of a Hom-associative dialgebra
with respect to the basis $e_1, e_2, \dots, e_n$ of $\mathcal{A}$.

 \noindent As a result,  $\mathcal{H}d$ can be considered as a closed subset of $2n^3+n^2$-dimensional
 affine space specified by the following system of polynomial equations with respect
 to the structure constants $\gamma_{ij}^k$, $\delta_{st}^q$ and $a_{ji}$ :

 Let $\left\{e_1,e_2, e_3,\cdots, e_n\right\}$ be a basis of an $n$-dimensional Hom-associative dialgebra $\mathcal{A}.$ The product of basis is denoted by

\begin{eqnarray}
\sum_{p=1}^n\sum_{q=1}^n\gamma_{ij}^pa_{qk}\gamma_{pq}^r&=&\sum_{p=1}^n\sum_{q=1}^na_{pi}\gamma_{jk}^q\gamma_{pq}^r,\\
\sum_{p=1}^n\sum_{q=1}^n\gamma_{ij}^pa_{qk}\gamma_{pq}^r&=&\sum_{p=1}^n\sum_{q=1}^na_{pi}\delta_{jk}^q\gamma_{pq}^r=,\\
\sum_{p=1}^n\sum_{q=1}^n\delta_{ij}^pa_{qk}\gamma_{pq}^r&=&\sum_{p=1}^n\sum_{q=1}^na_{pi}\gamma_{jk}^q\delta_{pq}^r,\\
\sum_{p=1}^n\sum_{q=1}^n\gamma_{ij}^pa_{qk}\delta_{pq}^r&=&\sum_{p=1}^n\sum_{q=1}^na_{pi}\gamma_{jk}^q\delta_{pq}^r,\\
\sum_{p=1}^n\sum_{q=1}^n\delta_{ij}^pa_{qk}\delta_{pq}^r&=&\sum_{p=1}^n\sum_{q=1}^na_{pi}\delta_{jk}^q\delta_{pq}^r.
\end{eqnarray}
We seek for all $2$-dimensional Hom-associative dialgebras,  we consider two classes of morphisms which are given by their
 Jordan forms. This implies that they are represented by the matrices
$$
\left(\begin{array}{ccc}
a&0\\
0&b
\end{array}
\right)\quad  \text{and} \quad
\left(\begin{array}{ccc}
a&1\\
0&a
\end{array}
\right).$$
Using similar calculations as depicted in the previous section, we obtain the following classification.
\begin{theorem}\label{Theo1}
Every $2$-dimensional complex Hom-associative dialgebra is isomorphic to one of the following pairwise non-isomorphic
Hom-associative dialgebra  $(\mathcal{A}, \dashv,\vdash,\alpha)$ where $\dashv,\vdash$ are the left product and
right product  and $\alpha$ of the structure map.

$\mathcal{H}d_2^1$ :	$\begin{array}{ll}
e_1\dashv e_1=e_1,\\
e_1\dashv e_2=e_1,\\
\end{array}$
$\begin{array}{ll}
e_2\dashv e_1=e_2,\\
e_2\dashv e_2=e_2,\\
\end{array}$
$\begin{array}{ll}
e_1\vdash e_1=e_1,\\
e_1\vdash e_2=e_2,\\
\end{array}$
$\begin{array}{ll}
e_2\vdash e_1=e_1,\\
e_2\vdash e_2=e_2,\\
\end{array}$
$\begin{array}{ll}
\alpha(e_1)=e_1,\\
\alpha(e_2)=e_2.
\end{array}$\\
	
$\mathcal{H}d_2^2$ :
$\begin{array}{ll}
e_1\dashv e_2=e_2,\,
e_2\dashv e_1=e_1,\,
e_1\vdash e_1=e_1,\,
e_2\vdash e_1=e_1,\,
\end{array}$
$\begin{array}{ll}
\alpha(e_2)=e_1.
\end{array}$\\

$\mathcal{H}d_2^3$ :
$\begin{array}{ll}
e_2\dashv e_2=-\frac{e_1}{2}-e_2,\\
e_2\vdash e_1=e_1,\quad
e_2\vdash e_2=-e_1-e_2,
\end{array}$
$\begin{array}{ll}
\alpha(e_1)=-e_1,\quad
\alpha(e_2)=e_1+e_2.
\end{array}$\\

$\mathcal{H}d_2^4$ :
$\begin{array}{ll}
e_1\dashv e_2=e_1,\\
e_2\dashv e_2=e_1+e_2,\\
\end{array}$
$\begin{array}{ll}
e_1\vdash e_2=e_1,\\
e_2\vdash e_2=e_1+e_2.
\end{array}$
$\begin{array}{ll}
\alpha(e_1)=e_1,\\
\alpha(e_2)=e_1+e_2.
\end{array}$\\

$\mathcal{H}d_2^5$ :
$\begin{array}{ll}
e_1\dashv e_2=e_1,\\
e_2\dashv e_1=e_1,\\
\end{array}$
$\begin{array}{ll}
e_2\dashv e_2=e_1,\quad
e_2\vdash e_1=e_1,\quad
\end{array}$
$\begin{array}{ll}
\alpha(e_2)=e_1.
\end{array}$\\

$\mathcal{H}d_2^6$ :
$\begin{array}{ll}
e_1\dashv e_2=e_1,\\
e_2\dashv e_2=e_2,\\
\end{array}$
$\begin{array}{ll}
e_2\vdash e_1=e_1,\\
e_2\vdash e_2=e_1,\\
\end{array}$
$\begin{array}{ll}
\alpha(e_1)=e_1,\quad
\alpha(e_2)=e_2.
\end{array}$\\

$\mathcal{H}d_2^7$ :
$\begin{array}{ll}
e_1\dashv e_1=e_1+e_2,\,
e_1\vdash e_1=e_1,\,
\end{array}$
$\begin{array}{ll}
e_1\vdash e_2=e_1,\\
\end{array}$
$\begin{array}{ll}
\alpha(e_1)=e_1,\quad
\alpha(e_2)=e_2.
\end{array}$\\

$\mathcal{H}d_2^8$ :
$\begin{array}{ll}
e_1\dashv e_2=e_1,\,
e_2\dashv e_1=e_1,\,
e_2\dashv e_2=e_1,\,
e_2\vdash e_2=e_1,\,
\end{array}$
$\begin{array}{ll}
\alpha(e_2)=e_1.
\end{array}$

$\mathcal{H}d_2^9$ :
$\begin{array}{ll}
e_1\dashv e_2=ae_1,\\
e_2\dashv e_2=be_1+ce_2,\\
\end{array}$
$\begin{array}{ll}
e_2\vdash e_1=fe_1,\\
e_2\vdash e_2=ge_1+ke_2,
\end{array}$
$\begin{array}{ll}
\alpha(e_1)=e_1,\\
\alpha(e_2)=e_1+e_2.
\end{array}$
\end{theorem}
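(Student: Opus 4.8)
The plan is to reduce the full polynomial system defining $\mathcal{H}d_2(\mathbb{C})$ to a short list of normal forms by exploiting the $GL_2(\mathbb{C})$-action together with the multiplicativity constraint. Writing $e_i\dashv e_j=\gamma_{ij}^1e_1+\gamma_{ij}^2e_2$, $e_i\vdash e_j=\delta_{ij}^1e_1+\delta_{ij}^2e_2$ and $\alpha(e_i)=a_{1i}e_1+a_{2i}e_2$, the axioms (\ref{eq4})--(\ref{eq8}) together with multiplicativity become a finite system of quadratic and cubic equations in the twenty structure constants. First I would invoke Proposition~\ref{p11}: since any isomorphic copy carries $\alpha$ to $\Phi\alpha\Phi^{-1}$, I may assume that $\alpha$ is already in Jordan canonical form, so that over $\mathbb{C}$ only the two cases listed above arise, namely the diagonal $\alpha=\mathrm{diag}(a,b)$ and the single Jordan block with eigenvalue $a$.

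The decisive simplification comes from multiplicativity. In the diagonal case, writing $\lambda_1=a$ and $\lambda_2=b$, the identity $\alpha(e_i\dashv e_j)=\alpha(e_i)\dashv\alpha(e_j)$ forces $\gamma_{ij}^k(\lambda_k-\lambda_i\lambda_j)=0$ for all $i,j,k$, and likewise for every $\delta_{ij}^k$. Hence a structure constant can be nonzero only when its weights match, $\lambda_k=\lambda_i\lambda_j$; this grading-compatibility condition annihilates most of the constants at once and partitions the analysis into the finitely many sub-cases governed by the coincidences among $a$, $b$, $0$ and $1$ (such as $a\in\{0,1\}$, $b\in\{0,1\}$, $a=b$, $ab=a$, and so on). An analogous but slightly weaker constraint, involving the off-diagonal $1$, holds for the Jordan block. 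Within each sub-case I would then impose the remaining diassociativity relations (\ref{eq4})--(\ref{eq8}) on the few surviving constants and solve the resulting small system.

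Next, in each sub-case there remains the freedom to conjugate by the centralizer of $\alpha$ in $GL_2(\mathbb{C})$ — the diagonal torus when $a\neq b$, all of $GL_2(\mathbb{C})$ when $\alpha$ is scalar, and the upper-triangular Toeplitz group $\left(\begin{smallmatrix}s&t\\0&s\end{smallmatrix}\right)$ for the Jordan block — acting on the products through the transport formula of Proposition~\ref{p11}. I would use this residual action to normalize the free parameters (scaling a nonzero constant to $1$, clearing off-diagonal terms), collapsing each sub-case to a single representative and leaving at most the genuine parametric family that appears as $\mathcal{H}d_2^9$. To finish, I would check that the nine representatives are pairwise non-isomorphic by comparing invariants: the conjugacy class of $\alpha$, namely its eigenvalues and Jordan type, is the primary invariant, and within a fixed $\alpha$-type the dimensions of the images of $\dashv$ and $\vdash$, the existence of a bar unit, and the isomorphism type of the untwisted associative dialgebra separate the remaining candidates.

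The main obstacle I expect is not any single computation but the bookkeeping: the case tree branching on eigenvalue coincidences is large, each leaf must be solved and then normalized under the correct centralizer, and one must verify both that the tree is exhaustive (no family is missed) and that distinct leaves do not secretly coincide under a less obvious change of basis. The most delicate point is keeping the normalization honest — in particular confirming that the parameters $a,b,c,f,g,k$ surviving in $\mathcal{H}d_2^9$ genuinely cannot be reduced further by the residual Toeplitz action.
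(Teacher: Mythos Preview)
Your strategy is sound but follows a different route from the paper's proof. The paper, rather than classifying the full triple $(\dashv,\vdash,\alpha)$ from scratch, starts from a pre-existing list of $2$-dimensional Hom-associative algebras $(\mathcal{A},\vdash,\alpha)$: it fixes one such algebra, writes the unknown left product as $e_i\dashv e_j=a_{?}e_1+a_{?}e_2$ with eight undetermined coefficients, and then runs through the dialgebra axioms (\ref{eq4})--(\ref{eq8}) one at a time to pin these coefficients down; only the case leading to $\mathcal{H}d_2^1$ is written out, the others being declared analogous. Thus the paper treats $\vdash$ and $\alpha$ as input data and solves only for $\dashv$, whereas you propose to normalize $\alpha$ to Jordan form first, use multiplicativity as a grading to kill most of the $\gamma_{ij}^k,\delta_{ij}^k$ simultaneously, and then finish with the centralizer action. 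Your approach is more self-contained and makes the $GL_2$-symmetry explicit; the paper's is shorter because it offloads half the work to the prior Hom-associative classification. Both lead to the same case analysis governed by the Jordan type of $\alpha$, so the bookkeeping burden you flag is comparable in either route; the paper simply does not display it.
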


\begin{proof}
Let $\mathcal{A}$ be a two-dimensional vector space. To determine a Hom-associative dialgebra structure on $\mathcal{A}$, we consider $\mathcal{A}$ with respect to 
Hom-diassociative operation.

Let $\mathcal{H}_2^{'}=(\mathcal{A}, \vdash, \alpha)$ be the algebra $$e_1\vdash e_1=e_1,\quad e_1\vdash e_2=e_2,\,\,e_2\vdash e_1=e_1,\,\, e_2\vdash e_2=e_2,\quad \alpha(e_1)=e_1,\quad\alpha(e_2)=e_2.$$
The second multiplication operation $\dashv$ in $\mathcal{A}$, is indicated as follows :
$$e_1\dashv e_1=a_1e_1+a_2,\, e_1\dashv e_2=a_3e_1+a_4e_2,\,e_2\dashv e_1=a_5e_1+a_6e_2,\,e_2\vdash e_2=a_7e_1+a_8e_2.$$

\noindent Now, verifying  Hom-associative dialgebras axioms, we get several constants
for the coefficients $a_i$ where, $i=1,2,\dots,8.$

\noindent Applying
 $(e_1\dashv e_1)\vdash\alpha(e_1)=\alpha(e_1)\vdash(e_1\vdash e_1)$,
 we have $(a_1e_1+a_2e_2)\vdash e_1=e_1\vdash e_1$ . Then, $a_1e_1=e_1.$
 Therefore $a_1=1.$\\

\noindent The verification, $(e_1\vdash e_1)\dashv\alpha(e_1)=\alpha(e_1)\vdash(e_1\dashv e_1)$
yields $(e_1+a_2e_2)\vdash e_1=e_1\vdash e_1$. We have $e_1+a_2e_2=e_1$. Hence we get $a_2=0.$ \\

\noindent Consider $(e_1\dashv e_2)\vdash\alpha(e_1)=\alpha(e_1)\vdash(e_2\vdash e_1)$.
It implies that $(a_3e_1+a_4e_2)\dashv e_1=e_1\dashv e_1$. Hence, $a_3=1$ and $a_4=0.$ \\

\noindent The next relation to consider is $(e_2\vdash e_1)\vdash\alpha(e_2)=\alpha(e_2)\vdash(e_1\dashv e_2)$.
Hence, $a_3=0$ and $a_4=1.$ \\

\noindent Consider $(e_2\dashv e_1)\vdash\alpha(e_2)=\alpha(e_2)\vdash(e_1\vdash e_2)$. It implies that $(a_5e_1+a_6e_2)\vdash e_2)=e_2\vdash e_2 $, Therefore, $a_5=0$ and $a_6=1.$ \\

\noindent Finally, we apply $(e_2\dashv e_2)\vdash\alpha(e_2)=\alpha(e_2)\vdash(e_2\vdash e_2)$. We obtain $(a_7e_1+a_8e_2)\vdash e_2=e_2\vdash e_2$ and we get $a_7=0$ and $a_8=1.$\\
\noindent The verification of all other cases leads to the obtained constraints.\\

\noindent If $a_3=1$ and $a_4=0$, then the right and left product coincide and we get the Hom-associative dialgebra.\\

\noindent If $a_3=0$ and $a_4=1$, we obtain the Hom-associative dialgebras $\mathcal{H}d_2^1$. The other Hom-associative dialgebras
of the list of Theorem \ref{Theo1} can be obtained by minor modification of the  above observation.
\end{proof}

We seek for all $3$-dimensional Hom-associative dialgebras, we consider two classes of morphisms that are provided by their Jordan forms. This implies that they are represented by the matrices
$$
\left(\begin{array}{ccc}
a&0&0\\
0&b&0\\
0&0&c
\end{array}
\right),\quad
\left(\begin{array}{ccc}
a&1&0\\
0&a&0\\
0&0&b
\end{array}
\right),\quad
\left(\begin{array}{ccc}
a&1&0\\
0&a&0\\
0&0&a
\end{array}
\right),\quad
\left(\begin{array}{ccc}
a&1&0\\
0&a&1\\
0&0&a
\end{array}
\right).$$
Using similar calculations as in the previous Section, we obtain the following classification.
\begin{theorem}\label{td2}
Every $3$-dimensional multiplicative  Hom-associative dialgebra is isomorphic to one of the following pairwise non-isomorphic
Hom-associative dialgebras $(\mathcal{A}, \dashv, \vdash, \alpha)$, where $\dashv, \vdash$ are
left product and right product and $\alpha$ of the structure map.

$$\mathcal{H}d_3^1:
\begin{array}{ll}
e_2\dashv e_2=e_1,\\
e_2\dashv e_3=e_1,\\
\end{array}
\begin{array}{ll}
e_3\dashv e_2=e_1,\\
e_3\dashv e_3=e_2,\\
\end{array}
\begin{array}{ll}
e_2\vdash e_2=e_1,\\
e_2\vdash e_3=e_1,\\
\end{array}
\begin{array}{ll}
e_3\vdash e_3=e_1,\\
\alpha(e_2)=e_1.
\end{array}$$

$$\mathcal{H}d_3^2 :
\begin{array}{ll}
e_2\dashv e_1=e_1,\\
e_2\dashv e_3=e_1,\\
\end{array}
\begin{array}{ll}
e_3\dashv e_2=e_1,\\
e_3\dashv e_3=e_2,\\
\end{array}
\begin{array}{ll}
e_2\vdash e_2=e_1,\\
e_2\vdash e_3=e_2,\\
\end{array}
\begin{array}{ll}
e_3\vdash e_3=e_1,\\
\alpha(e_2)=e_1.
\end{array}$$

$$\mathcal{H}d_3^3  :
\begin{array}{ll}
e_2\dashv e_2=e_1,\\
e_2\dashv e_3=e_1,\\
\end{array}
\begin{array}{ll}
e_3\dashv e_2=e_1,\\
e_3\dashv e_3=e_1,\\
\end{array}
\begin{array}{ll}
e_2\vdash e_2=e_1,\\
e_2\vdash e_3=e_1,\\
\end{array}
\begin{array}{ll}
\alpha(e_2)=e_1.
\end{array}$$

$$\mathcal{H}d_3^4 :
\begin{array}{ll}
e_2\dashv e_2=e_1,\\
e_2\dashv e_3=e_1,\\
\end{array}
\begin{array}{ll}
e_3\dashv e_2=e_1,\\
e_3\dashv e_3=e_1,\\
\end{array}
\begin{array}{ll}
e_2\vdash e_2=e_1,\\
e_2\vdash e_3=e_1,\\
\end{array}
\begin{array}{ll}
e_3\vdash e_3=e_1,\\
\alpha(e_1)=e_1.
\end{array}$$

$$\mathcal{H}d_3^5 :
\begin{array}{ll}
e_1\dashv e_1=e_1,\\
e_2\dashv e_2=e_2,\\
\end{array}
\begin{array}{ll}
e_3\dashv e_2=e_2,\\
e_1\vdash e_1=e_1,\\
\end{array}
\begin{array}{ll}
e_2\vdash e_2=e_2,\quad
\alpha(e_1)=e_1.
\end{array}$$

$$\mathcal{H}d_3^6 :
\begin{array}{ll}
e_1\dashv e_2=e_1,\\
e_2\dashv e_1=e_1,\\
e_2\dashv e_2=e_1,\\
\end{array}
\begin{array}{ll}
e_2\dashv e_3=e_1,\\
e_3\dashv e_2=e_3,\\
e_1\vdash e_2=e_1,\\
\end{array}
\begin{array}{ll}
e_2\vdash e_1=e_1,\\
e_2\vdash e_2=e_1,\\
e_2\vdash e_3=e_3,\\
\end{array}
\begin{array}{ll}
e_3\vdash e_2=e_3,\\
e_3\vdash e_3=e_1,\\
\alpha(e_2)=e_1.
\end{array}$$

$$\mathcal{H}d_3^7 :
\begin{array}{ll}
e_2\dashv e_2=e_2,\\
e_2\dashv e_3=e_1,\\
\end{array}
\begin{array}{ll}
e_3\dashv e_3=e_3,\\
e_2\vdash e_2=e_2,\\
\end{array}
\begin{array}{ll}
e_2\vdash e_3=e_1,\\
e_3\vdash e_2=e_1,\\
\end{array}
\begin{array}{ll}
e_3\vdash e_3=e_1,\\
\alpha(e_1)=e_1.
\end{array}$$

$$\mathcal{H}d_3^{8} :
\begin{array}{ll}
e_2\dashv e_2=e_2,\\
e_2\dashv e_3=e_2,\\
\end{array}
\begin{array}{ll}
e_2\vdash e_2=e_2,\\
e_2\vdash e_3=e_2,\\
\end{array}
\begin{array}{ll}
e_3\vdash e_2=e_1,\quad
\alpha(e_1)=e_1.
\end{array}$$

$$\mathcal{H}d_3^{9} :
\begin{array}{ll}
e_2\dashv e_2=e_2,\\
e_2\dashv e_3=e_2,\\
\end{array}
\begin{array}{ll}
e_2\vdash e_2=e_2,\\
e_2\vdash e_3=e_2,\\
\end{array}
\begin{array}{ll}
e_3\vdash e_2=e_1,\\
e_3\vdash e_3=e_3,\\
\end{array}
\begin{array}{ll}
\alpha(e_1)=e_1.
\end{array}$$

$$\mathcal{H}d_3^{10} :
\begin{array}{ll}
e_1\dashv e_2=e_1,\\
e_2\dashv e_1=e_1,\\
e_2\dashv e_3=e_1,\\
\end{array}
\begin{array}{ll}
e_3\dashv e_2=e_3,\\
e_1\vdash e_2=e_1,\\
e_2\vdash e_2=e_1,\\
\end{array}
\begin{array}{ll}
e_2\vdash e_3=e_3,\\
e_3\vdash e_2=e_1,\\
\end{array}
\begin{array}{ll}
\alpha(e_2)=e_1.
\end{array}$$

$$\mathcal{H}d_3^{11} :
\begin{array}{ll}
e_1\dashv e_2=(-1)^{2/3}e_1,\\
e_2\dashv e_1=ae_1,\\
e_2\dashv e_2=be_1+ce_3,
\end{array}
\begin{array}{ll}
e_2\dashv e_3=e_1+de_3,\\
e_3\dashv e_2=e_1+(-1)^{2/3}e_3,\\
e_3\dashv e_3=\frac{i}{\sqrt{3}}e_1,
\end{array}
\begin{array}{ll}
e_1\vdash e_2=(-1)^{2/3}e_1,\\
e_2\vdash e_1=fe_1,\\
e_2\vdash e_2=e_1+g(-1)^{2/3}e_3,
\end{array}
\begin{array}{ll}
e_3\vdash e_2=e_1+e_3,\\
e_3\vdash e_3=\frac{i}{\sqrt{3}}e_1,\\
\alpha(e_2)=e_1.
\end{array}$$

$$\mathcal{H}d_3^{12} :
\begin{array}{ll}
e_1\dashv e_2=e_1,\\
e_2\dashv e_1=e_1,\\
e_2\dashv e_2=e_1+e_3,\\
\end{array}
\begin{array}{ll}
e_2\dashv e_3=e_1+e_3,\\
e_3\dashv e_2=e_1+e_3,\\
e_3\dashv e_3=e_3,\\
\end{array}
\begin{array}{ll}
e_1\vdash e_2=e_1,\\
e_2\vdash e_1=e_1,\\
e_2\vdash e_2=e_1+e_3,\\
\end{array}
\begin{array}{ll}
e_2\vdash e_3=e_1+e_3,\\
e_3\vdash e_2=e_1+e_3,\\
\alpha(e_2)=e_1.
\end{array}$$
$$\mathcal{H}d_3^{13} :
\begin{array}{ll}
e_1\dashv e_1=(-1)^{1/3}e_2,\\
e_1\dashv e_3=ae_2,\\
\end{array}
\begin{array}{ll}
e_3\dashv e_1=be_2,\\
e_3\dashv e_3=\frac{(-1)^{1/3}}{\sqrt{3}}ie_2,\\
\end{array}
\begin{array}{ll}
e_1\vdash e_1=(-1)^{1/3}e_2,\\
e_1\vdash e_3=ce_1,\\
e_3\vdash e_1=\frac{(-1)^{1/3}}{\sqrt{3}}ie_2,\\
\end{array}
\begin{array}{ll}
\alpha(e_1)=e_1,\\
\alpha(e_2)=e_2,\\
\alpha(e_3)=e_3.
\end{array}$$
$$\mathcal{H}d_3^{14} :
\begin{array}{ll}
e_1\dashv e_1=e_1+a(-1)^{2/3}e_3,\\
e_1\dashv e_3=be_3,\\
e_3\dashv e_1=\frac{i}{\sqrt{3}}e_1+c(-1+(-1)^{2/3})e_3,\\
\end{array}
\begin{array}{ll}
e_1\vdash e_1=e_1+d(-1)^{2/3}e_3,\\
e_1\vdash e_3=e_1,\\
e_2\vdash e_2=fe_1+ge_3,\\
\end{array}
\begin{array}{ll}
e_3\vdash e_1=e_1+h(-1)^{2/3}e_3,\\
e_3\vdash e_3=e_1+\frac{i}{\sqrt{3}}e_3,\\
\alpha(e_2)=e_2.
\end{array}$$
\end{theorem}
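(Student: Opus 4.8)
The plan is to mimic the strategy already used for the two-dimensional case in the proof of Theorem \ref{Theo1}, organizing the whole classification around the conjugacy class of the structure map $\alpha$. By Proposition \ref{p11}, if $\Phi$ is any invertible linear map then $(\mathcal{A},\dashv,\vdash,\alpha)$ is isomorphic to $(\mathcal{A},\dashv',\vdash',\Phi\alpha\Phi^{-1})$; hence $\alpha$ is only determined up to conjugation by $GL_3(\mathbb{K})$. Since $\mathbb{K}$ is algebraically closed of characteristic zero, I would first replace $\alpha$ by its Jordan canonical form which, after ordering the blocks, is exactly one of the four matrices displayed before the statement. This splits the problem into four families according to whether $\alpha$ is diagonalizable, has a single $2\times 2$ Jordan block, and so on.

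Next, for each fixed Jordan form of $\alpha$ I would write the two products in the basis $\{e_1,e_2,e_3\}$ by means of the structure constants $\gamma_{ij}^k$ and $\delta_{ij}^k$, and impose the five Hom-diassociativity identities (\ref{eq4})--(\ref{eq8}) together with the multiplicativity relations $\alpha(x\dashv y)=\alpha(x)\dashv\alpha(y)$ and $\alpha(x\vdash y)=\alpha(x)\vdash\alpha(y)$. Substituting the known entries $a_{ij}$ of the chosen Jordan form turns these identities into a finite system of polynomial equations in the remaining unknowns $\gamma_{ij}^k,\delta_{ij}^k$, namely the explicit structure-constant system recorded just above the statement. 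Solving this system, separating the branches forced by the vanishing or non-vanishing of leading coefficients and by coincidences among the eigenvalues $a,b,c$, produces for each Jordan type a finite list of parametrized solution strata.

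Finally, I would exploit the residual freedom: once the Jordan form of $\alpha$ is fixed, the only admissible base changes $\Phi$ are those commuting with $\alpha$, that is the centralizer of $\alpha$ in $GL_3(\mathbb{K})$, together with permutations of equal Jordan blocks. Acting with this stabilizer on each solution stratum, I would normalize the still-free parameters, discard redundant branches and keep one representative per orbit; this is how the genuine normal forms $\mathcal{H}d_3^1,\dots,\mathcal{H}d_3^{14}$ emerge. To conclude that the list has no repetitions, I would attach to each representative a few isomorphism invariants that are easy to read off and invariant under the admissible transformations, such as the Jordan type and eigenvalue multiset of $\alpha$, the dimensions of the images $\mathcal{A}\dashv\mathcal{A}$ and $\mathcal{A}\vdash\mathcal{A}$ and of the common annihilator, and whether the two products coincide, and then check that no two of the fourteen algebras share the same tuple of invariants.

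I expect the genuine difficulty to be twofold. First, the case analysis in the middle step is heavy: each Jordan type yields a large coupled polynomial system, and one must carefully track the branching caused by degenerate coefficient configurations and by equalities among $a,b,c$, since it is easy to miss a stratum or to double-count. Second, the stabilizer reduction in the final step is delicate, because the centralizer of a non-diagonalizable $\alpha$ is comparatively small, so fewer parameters can be normalized away, and one must verify that the surviving parametrized families (such as those appearing in $\mathcal{H}d_3^{11}$, $\mathcal{H}d_3^{13}$ and $\mathcal{H}d_3^{14}$) really are inequivalent rather than artifacts of an incomplete normalization.
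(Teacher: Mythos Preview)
Your proposal is correct and follows essentially the same approach as the paper: the paper's own proof of this theorem consists of the single sentence ``The proof is similar to Theorem \ref{Theo1}'', and what you have written is precisely a careful elaboration of that strategy---reduce $\alpha$ to one of the four Jordan forms listed before the statement, solve the structure-constant system for each type, and normalize using the stabilizer of $\alpha$.
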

\begin{proof}
The proof is similar to Theorem \ref{Theo1}
\end{proof}

We seek for all $4$-dimensional Hom-associative dialgebras, we consider two classes of morphisms which are given by
their Jordan forms. This implies that they are represented by the matrices
$$
\left(\begin{array}{cccc}
a&0&0&0\\
0&b&0&0\\
0&0&c&0\\
0&0&0&d
\end{array}
\right),\quad
\left(\begin{array}{cccc}
a&1&0&0\\
0&a&1&0\\
0&0&a&1\\
0&0&0&a
\end{array}
\right),\quad
\left(\begin{array}{cccc}
a&1&0&0\\
0&a&0&0\\
0&0&a&1\\
0&0&0&a
\end{array}
\right),\quad
\left(\begin{array}{cccc}
a&1&0&0\\
0&a&1&0\\
0&0&a&0\\
0&0&0&a
\end{array}
\right),\quad
\left(\begin{array}{cccc}
a&1&0&0\\
0&a&0&0\\
0&0&a&0\\
0&0&0&a
\end{array}
\right).$$
Using similar calculations as displayed in the previous Section, we obtain the following classification.
\begin{theorem}\label{td3}
Every $4$-dimensional multiplicative real Hom-associative dialgebra is isomorphic to one of the following pairwise non-isomorphic
Hom-associative dialgebras $(\mathcal{A}, \dashv, \vdash, \alpha)$, where $\dashv, \vdash$ are
left product and right product and $\alpha$ the structure map.
\end{theorem}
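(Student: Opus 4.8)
The plan is to follow the same scheme used to establish Theorems \ref{Theo1} and \ref{td2}, now with $n=4$. The starting point is Proposition \ref{p11}: since any invertible linear map $\Phi$ transports a Hom-associative dialgebra $(\mathcal A,\dashv,\vdash,\alpha)$ to an isomorphic one $(\mathcal A,\dashv',\vdash',\Phi\alpha\Phi^{-1})$, we are free to replace $\alpha$ by any conjugate. Hence I would first put the structure map $\alpha$ into Jordan canonical form; since the base field is algebraically closed, $\alpha$ may be assumed to be one of the five $4\times4$ Jordan forms displayed above, and it suffices to carry out the classification separately for each such fixed choice of $\alpha$.

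For each fixed form of $\alpha$ I would write the two products through their structure constants $\gamma_{ij}^k$ (for $\dashv$) and $\delta_{ij}^k$ (for $\vdash$) and impose the full system: the five Hom-associativity identities (\ref{eq4})--(\ref{eq8}), equivalently the five families of polynomial equations in $\gamma,\delta,a$ recorded in this section, together with the two multiplicativity conditions $\alpha(x\dashv y)=\alpha(x)\dashv\alpha(y)$ and $\alpha(x\vdash y)=\alpha(x)\vdash\alpha(y)$. Substituting the known entries of $\alpha$ collapses most of these equations to linear or low-degree relations among the $\gamma$'s and $\delta$'s, which I would solve by branching on which structure constants are forced to vanish, exactly as in the coefficient-by-coefficient argument of Theorem \ref{Theo1}.

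Once the solution set for a given $\alpha$ is described, the residual freedom is precisely the centralizer of $\alpha$ in $GL_4$ (the invertible $\Phi$ with $\Phi\alpha\Phi^{-1}=\alpha$), which still acts via the transport formula of Proposition \ref{p11}. I would use this residual action to normalize the surviving parameters, rescaling and clearing off-diagonal coefficients, to reach a finite list of canonical representatives. To confirm that the listed algebras are pairwise non-isomorphic, I would compute isomorphism invariants, namely the dimensions of the images of $\dashv$ and $\vdash$, the rank and nilpotency type of $\alpha$, the annihilator subspaces, and the isomorphism class of the associated untwisted associative dialgebra, and verify that these invariants separate the classes.

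The hardest part will be the bookkeeping. With two products there are up to $2\cdot 4^3$ structure constants, and across the five Jordan types the vanishing and non-vanishing patterns of the $\gamma_{ij}^k$ and $\delta_{ij}^k$ generate a large branching tree of sub-cases. Ensuring that this case analysis is simultaneously exhaustive (no solution branch omitted) and irredundant (no canonical form produced twice), and that the chosen invariants genuinely distinguish every representative, is where the real difficulty lies; the algebraic manipulations inside each individual branch are routine once the branch has been isolated.
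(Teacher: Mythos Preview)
Your proposal is correct and follows essentially the same approach as the paper: the paper's own proof reads simply ``The proof is similar to Theorem \ref{Theo1}'', i.e.\ reduce $\alpha$ to one of the five listed $4\times4$ Jordan forms, solve the resulting polynomial system in the structure constants $\gamma_{ij}^k,\delta_{ij}^k$, and normalize by the remaining base changes. Your write-up is in fact more explicit than the paper's about the residual $GL_4$-centralizer action and the invariants used to separate isomorphism classes, but the underlying strategy is identical.
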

$$\mathcal{H}d_4^1 :\\
\begin{array}{ll}
e_1\dashv e_2=e_1,\\
e_1\dashv e_4=e_3,\\
e_2\dashv e_1=e_1,\\
e_2\dashv e_3=ae_3,\\
\end{array}
\begin{array}{ll}
e_2\dashv e_4=e_1,\\
e_3\dashv e_2=e_1,\\
e_3\dashv e_4=e_1,\\
e_4\dashv e_1=e_3,\\
\end{array}
\begin{array}{ll}
e_4\dashv e_4=e_1,\\
e_1\vdash e_2=e_1,\\
e_1\vdash e_4=e_3,\\
e_2\vdash e_2=e_3,\\
\end{array}
\begin{array}{ll}
e_2\vdash e_3=e_1,\\
e_2\vdash e_4=e_1,\\
e_3\vdash e_2=e_1,\\
e_3\vdash e_4=e_1,\\
\end{array}
\begin{array}{ll}
e_4\vdash e_2=e_1,\\
e_4\vdash e_3=e_1,\\
e_4\vdash e_4=e_1,\\
\alpha(e_2)=e_1.
\end{array}$$

$$\mathcal{H}d_4^2 :\\
\begin{array}{ll}
e_1\dashv e_4=e_3,\\
e_2\dashv e_1=e_1,\\
e_2\dashv e_2=e_3,\\
e_2\dashv e_3=e_1,\\
\end{array}
\begin{array}{ll}
e_2\dashv e_4=e_1,\\
e_3\dashv e_2=e_1,\\
e_3\dashv e_4=e_1,\\
e_4\dashv e_1=e_3,\\
\end{array}
\begin{array}{ll}
e_4\dashv e_4=e_1,\\
e_1\vdash e_2=e_1,\\
e_1\vdash e_4=e_3,\\
e_2\vdash e_2=e_3,\\
\end{array}
\begin{array}{ll}
e_2\vdash e_3=e_1,\\
e_2\vdash e_4=e_1,\\
e_3\vdash e_2=e_1,\\
e_3\vdash e_4=e_1,\\
\end{array}
\begin{array}{ll}
e_4\vdash e_2=e_1,\\
e_4\vdash e_3=e_1,\\
e_4\vdash e_4=e_1,\\
\alpha(e_2)=e_1.
\end{array}$$

$$\mathcal{H}d_4^3 :\\
\begin{array}{ll}
e_1\dashv e_4=e_3,\\
e_2\dashv e_1=e_1,\\
e_2\dashv e_2=e_1+e_3,\\
e_2\dashv e_3=e_1,\\
\end{array}
\begin{array}{ll}
e_2\dashv e_4=e_1,\\
e_3\dashv e_2=e_1,\\
e_3\dashv e_4=e_1,\\
e_4\dashv e_1=e_3,\\
\end{array}
\begin{array}{ll}
e_4\dashv e_4=e_1,\\
e_1\vdash e_2=e_1,\\
e_1\vdash e_4=e_3,\\
e_2\vdash e_2=e_3,\\
\end{array}
\begin{array}{ll}
e_2\vdash e_3=e_1,\\
e_2\vdash e_4=e_3,\\
e_3\vdash e_2=e_1,\\
e_3\vdash e_4=e_1,\\
\end{array}
\begin{array}{ll}
e_4\vdash e_2=e_1,\\
e_4\vdash e_3=e_1,\\
e_4\vdash e_4=e_1,\\
\alpha(e_2)=e_1.
\end{array}$$

$$\mathcal{H}d_4^4 :\\
\begin{array}{ll}
e_2\dashv e_3=e_1+e_3,\\
e_3\dashv e_2=e_1+e_3,\\
e_3\dashv e_4=e_1+e_3,\\
e_4\dashv e_1=e_3,\\
\end{array}
\begin{array}{ll}
e_4\dashv e_3=e_1+e_3,\\
e_4\dashv e_4=e_1+e_3,\\
e_1\vdash e_2=e_1,\\
e_1\vdash e_4=e_3,\\
\end{array}
\begin{array}{ll}
e_2\vdash e_3=e_1+e_3,\\
e_2\vdash e_4=e_1+e_3,\\
e_3\vdash e_2=e_1+e_3,\\
e_3\vdash e_4=e_1+e_3,\\
\end{array}
\begin{array}{ll}
e_4\vdash e_3=e_1+e_3,\\
e_4\vdash e_4=e_1+e_3,\\
\alpha(e_2)=e_1,\\
\alpha(e_4)=e_3.
\end{array}$$

$$\mathcal{H}d_4^5 :\\
\begin{array}{ll}
e_2\dashv e_3=e_3,\\
e_2\dashv e_4=e_1+e_3,\\
e_3\dashv e_2=e_1+e_3,\\
e_3\dashv e_4=e_1+e_3,\\
\end{array}
\begin{array}{ll}
e_4\dashv e_1=e_3,\\
e_4\dashv e_3=e_1+e_3,\\
e_4\dashv e_4=e_1+e_3,\\
e_2\vdash e_1=e_1,\\
\end{array}
\begin{array}{ll}
e_2\vdash e_3=e_1+e_3,\\
e_2\vdash e_4=e_1+e_3,\\
e_3\vdash e_2=e_1+e_3,\\
e_3\vdash e_4=e_1+e_3,\\
\end{array}
\begin{array}{ll}
e_4\vdash e_3=e_1+e_3,\\
e_4\vdash e_4=e_1+e_3,\\
\alpha(e_2)=e_1,\\
\alpha(e_4)=e_3.
\end{array}$$

$$\mathcal{H}d_4^6 :\\
\begin{array}{ll}
e_1\dashv e_2=e_3,\\
e_2\dashv e_3=e_1+e_3,\\
e_2\dashv e_4=e_1+e_3,\\
e_3\dashv e_2=e_1+e_3,\\
\end{array}
\begin{array}{ll}
e_3\dashv e_4=e_1,\\
e_4\dashv e_2=e_3,\\
e_4\dashv e_3=e_1+e_3,\\
e_4\dashv e_4=e_1+e_3,\\
\end{array}
\begin{array}{ll}
e_2\vdash e_1=e_1,\\
e_2\vdash e_3=e_1+e_3,\\
e_2\vdash e_4=e_1+e_3,\\
e_3\vdash e_2=e_1+e_3,\\
\end{array}
\begin{array}{ll}
e_3\vdash e_4=e_1+e_3,\\
e_4\vdash e_3=e_1+e_3,\\
\alpha(e_2)=e_1,\\
\alpha(e_4)=e_3.
\end{array}$$

$$\mathcal{H}d_4^7 :\\
\begin{array}{ll}
e_2\dashv e_3=e_4,\\
e_3\dashv e_2=e_4,\\
e_3\dashv e_4=e_4,\\
\end{array}
\begin{array}{ll}
e_4\dashv e_3=e_4,\\
e_4\dashv e_4=e_2+e_4,\\
e_3\vdash e_2=e_2,\\
\end{array}
\begin{array}{ll}
e_3\vdash e_4=e_2+e_4,\\
e_4\vdash e_3=e_2+e_4,\\
e_4\vdash e_4=e_2+e_4,\\
\end{array}
\begin{array}{ll}
\alpha(e_1)=e_1,\\
\alpha(e_2)=e_2.
\end{array}$$

$$\mathcal{H}d_4^8 :\\
\begin{array}{ll}
e_2\dashv e_3=e_4,\\
e_3\dashv e_2=e_4,\\
e_3\dashv e_3=e_4,\\
\end{array}
\begin{array}{ll}
e_3\dashv e_4=e_4,\\
e_4\dashv e_3=e_4,\\
e_4\dashv e_4=e_4,\\
\end{array}
\begin{array}{ll}
e_3\vdash e_2=e_2,\\
e_3\vdash e_4=e_2+e_4,\\
e_4\vdash e_3=e_2+e_4,\\
\end{array}
\begin{array}{ll}
e_4\vdash e_4=e_2+e_4,\\
\alpha(e_1)=e_1,\\
\alpha(e_2)=e_2.
\end{array}$$

$$\mathcal{H}d_4^{9} :\\
\begin{array}{ll}
e_1\dashv e_1=e_1,\\
e_2\dashv e_4=e_1,\\
e_3\dashv e_3=e_2,\\
\end{array}
\begin{array}{ll}
e_3\dashv e_4=e_1,\\
e_4\dashv e_1=e_1,\\
e_4\dashv e_2=e_1,\\
\end{array}
\begin{array}{ll}
e_4\dashv e_3=e_1,\\
e_4\dashv e_4=e_1,\\
e_2\vdash e_4=e_2,\\
\end{array}
\begin{array}{ll}
e_3\vdash e_3=e_2,\\
e_3\vdash e_4=e_1+e_2,\\
e_4\vdash e_3=e_1+e_2,\\
\end{array}
\begin{array}{ll}
e_4\vdash e_4=e_2,\\
\alpha(e_2)=e_2,\\
\alpha(e_3)=e_3.
\end{array}$$

$$\mathcal{H}d_4^{10} :\\
\begin{array}{ll}
e_2\dashv e_2=e_2+e_4,\\
e_2\dashv e_4=e_2+e_4,\\
e_3\dashv e_3=e_1,\\
\end{array}
\begin{array}{ll}
e_4\dashv e_4=e_4,\\
e_2\vdash e_2=e_2+e_4,\\
e_2\dashv e_4=e_2+e_4,\\
\end{array}
\begin{array}{ll}
e_3\vdash e_3=e_2+e_4,\\
e_4\vdash e_2=e_4,\\
e_4\vdash e_4=e_2+e_4,\\
\end{array}
\begin{array}{ll}
\alpha(e_1)=e_1,\\
\alpha(e_3)=e_3.
\end{array}$$

$$\mathcal{H}d_4^{11} :\\
\begin{array}{ll}
e_1\dashv e_2=e_1,\\
e_2\dashv e_1=e_1,\\
e_2\dashv e_2=e_1,\\
\end{array}
\begin{array}{ll}
e_2\dashv e_3=e_1,\\
e_3\dashv e_2=e_1,\\
e_4\dashv e_2=e_1,\\
\end{array}
\begin{array}{ll}
e_4\dashv e_4=e_3,\\
e_1\vdash e_2=e_1,\\
e_2\vdash e_2=e_1,\\
\end{array}
\begin{array}{ll}
e_2\vdash e_3=e_1,\\
e_2\vdash e_4=e_1+e_3,\\
e_4\vdash e_2=e_1+e_3,\\
\end{array}
\begin{array}{ll}
e_4\vdash e_4=e_1,\\
\alpha(e_3)=e_3,\\
\alpha(e_4)=e_4.
\end{array}$$

$$\mathcal{H}d_4^{12} :\\
\begin{array}{ll}
e_2\dashv e_2=e_2+e_4,\\
e_2\dashv e_4=e_2+e_4,\\
e_3\dashv e_3=e_1,\\
\end{array}
\begin{array}{ll}
e_4\dashv e_2=e_2,\\
e_4\dashv e_4=e_4,\\
e_2\vdash e_2=e_2+e_4,\\
\end{array}
\begin{array}{ll}
e_2\dashv e_4=e_2+e_4,\\
e_3\vdash e_3=e_2+e_4,\\
e_4\vdash e_2=e_4,\\
\end{array}
\begin{array}{ll}
e_4\vdash e_4=e_2+e_4,\\
\alpha(e_1)=e_1,\\
\alpha(e_3)=e_3.
\end{array}$$

$$\mathcal{H}d_4^{13} :\\
\begin{array}{ll}
e_2\dashv e_2=e_2,\\
e_2\dashv e_3=e_2+e_3,\\
e_3\dashv e_3=e_2,\\
\end{array}
\begin{array}{ll}
e_4\dashv e_4=e_1,\\
e_2\vdash e_2=e_1+e_3,\\
e_2\vdash e_3=e_2,\\
\end{array}
\begin{array}{ll}
e_3\dashv e_2=e_1+e_3,\\
e_3\vdash e_3=e_1+e_3,\\
e_4\vdash e_4=e_1+e_2,\\
\end{array}
\begin{array}{ll}
\alpha(e_1)=e_1,\\
\alpha(e_4)=e_4.
\end{array}$$

$$\mathcal{H}d_4^{14} :\\
\begin{array}{ll}
e_2\dashv e_2=e_2,\\
e_2\dashv e_3=e_2+e_3,\\
e_3\dashv e_3=e_2+e_3,\\
\end{array}
\begin{array}{ll}
e_4\dashv e_4=e_1,\\
e_2\vdash e_2=e_1+e_3,\\
e_2\vdash e_3=e_1+e_2,\\
\end{array}
\begin{array}{ll}
e_3\dashv e_2=e_1+e_3,\\
e_3\vdash e_3=e_1+e_3,\\
e_4\vdash e_4=e_1+e_2,\\
\end{array}
\begin{array}{ll}
\alpha(e_1)=e_1,\\
\alpha(e_4)=e_4.
\end{array}$$

$$\mathcal{H}d_4^{15} :\\
\begin{array}{ll}
e_2\dashv e_4=e_1,\\
e_3\dashv e_3=e_2,\\
e_3\dashv e_4=e_1,\\
\end{array}
\begin{array}{ll}
e_4\dashv e_1=e_1,\\
e_4\dashv e_2=e_1,\\
e_4\dashv e_3=e_1,\\
\end{array}
\begin{array}{ll}
e_4\dashv e_4=e_1,\\
e_2\vdash e_4=e_2,\\
e_3\vdash e_3=e_1+e_2,\\
\end{array}
\begin{array}{ll}
e_3\vdash e_4=e_1+e_2,\\
e_4\vdash e_1=e_1+e_2,\\
e_4\vdash e_2=e_1+e_2,\\
\end{array}
\begin{array}{ll}
e_4\vdash e_3=e_1+e_2,\\
\alpha(e_2)=e_2,\\
\alpha(e_3)=e_3.
\end{array}$$

$$\mathcal{H}d_4^{16} :\\
\begin{array}{ll}
e_2\dashv e_4=e_1,\\
e_3\dashv e_3=e_2,\\
e_3\dashv e_4=e_1,\\
\end{array}
\begin{array}{ll}
e_4\dashv e_1=e_1,\\
e_4\dashv e_2=e_1,\\
e_4\dashv e_3=e_1,\\
\end{array}
\begin{array}{ll}
e_4\dashv e_4=e_1,\\
e_2\vdash e_4=e_1+e_2,\\
e_3\vdash e_3=e_2,\\
\end{array}
\begin{array}{ll}
e_3\vdash e_4=e_1+e_2,\\
e_4\vdash e_1=e_1+e_2,\\
e_4\vdash e_2=e_1+e_2,\\
\end{array}
\begin{array}{ll}
e_4\vdash e_4=e_2,\\
\alpha(e_2)=e_2,\\
\alpha(e_3)=e_3.
\end{array}$$
$$\mathcal{H}d_4^{17} :\\
\begin{array}{ll}
e_1\dashv e_4=e_1,\\
e_2\dashv e_4=e_1,\\
e_3\dashv e_3=e_2,\\
\end{array}
\begin{array}{ll}
e_3\dashv e_4=e_1,\\
e_4\dashv e_1=e_1,\\
e_4\dashv e_2=e_1,\\
\end{array}
\begin{array}{ll}
e_4\dashv e_3=e_1,\\
e_4\dashv e_4=e_1,\\
e_2\vdash e_4=e_1+e_2,\\
\end{array}
\begin{array}{ll}
e_3\vdash e_3=e_1+e_2,\\
e_4\vdash e_2=e_2,\\
e_4\vdash e_3=e_1+e_2,\\
\end{array}
\begin{array}{ll}
e_4\vdash e_4=e_2,\\
\alpha(e_2)=e_2,\\
\alpha(e_3)=e_3.
\end{array}$$

$$\mathcal{H}d_4^{18} :\\
\begin{array}{ll}
e_1\dashv e_2=e_3+e_4,\\
e_2\dashv e_1=e_3+e_4,\\
e_2\dashv e_2=e_3+e_4,\\
\end{array}
\begin{array}{ll}
e_2\dashv e_3=e_3+e_4,\\
e_2\dashv e_4=e_3+e_4,\\
e_3\dashv e_2=e_3+e_4,\\
\end{array}
\begin{array}{ll}
e_3\dashv e_3=e_3+e_4,\\
e_3\dashv e_4=e_3+e_4,\\
e_4\dashv e_2=e_3+e_4,\\
\end{array}
\begin{array}{ll}
e_1\vdash e_1=e_3+e_4,\\
e_2\vdash e_2=e_3+e_4,\\
e_2\vdash e_3=e_3+e_4,\\
\end{array}
\begin{array}{ll}
e_3\vdash e_2=e_3+e_4,\\
e_4\vdash e_4=e_3+e_4,\\
\alpha(e_2)=e_1.
\end{array}$$

$$\mathcal{H}d_4^{19} :\\
\begin{array}{ll}
e_1\dashv e_2=ae_3+be_4,\\
e_2\dashv e_1=e_3+ce_4,\\
e_2\dashv e_2=e_3+de_4,\\
\end{array}
\begin{array}{ll}
e_2\dashv fe_3=e_3+e_4,\\
e_2\dashv e_4=he_3+e_4,\\
e_3\dashv e_2=le_3+e_4,\\
\end{array}
\begin{array}{ll}
e_3\dashv e_3=e_3+e_4,\\
e_3\dashv e_4=e_3+e_4,\\
e_4\dashv e_2=e_3+e_4,\\
\end{array}
\begin{array}{ll}
e_1\vdash e_1=me_3+ne_4,\\
e_2\vdash e_2=oe_3+pe_4,\\
e_2\vdash e_3=ce_3+de_4,\\
\end{array}
\begin{array}{ll}
e_3\vdash e_2=e_3,\\
e_4\vdash e_4=e_4,\\
\alpha(e_2)=e_1.
\end{array}$$
$$\mathcal{H}d_4^{20} :\\
\begin{array}{ll}
e_2\dashv e_1=e_3+ae_4,\\
e_2\dashv e_2=e_3+be_4,\\
e_2\dashv e_3=ce_3+de_4,\\
\end{array}
\begin{array}{ll}
e_2\dashv e_4=e_3+e_4,\\
e_3\dashv e_2=fe_3+ge_4,\\
e_3\dashv e_3=e_3+e_4,\\
\end{array}
\begin{array}{ll}
e_3\dashv e_4=e_3+e_4,\\
e_4\dashv e_2=e_3+e_4,\\
e_1\vdash e_1=e_3+e_4,\\
\end{array}
\begin{array}{ll}
e_2\vdash e_2=he_3+e_4,\\
e_2\vdash e_3=ke_3+e_4,\\
e_3\vdash e_2=le_3,\\
\end{array}
\begin{array}{ll}
e_4\vdash e_3=e_3,\\
e_4\vdash e_4=e_4,\\
\alpha(e_2)=e_1.
\end{array}$$
\begin{proof}
The proof is similar to Theorem \ref{Theo1}
\end{proof}

 \section{Derivations of Complex Hom-associative dialgebras}
 This section is notably devoted to the description of derivations of two, three and four-dimensional Complex Hom-associative dialgebras.

\subsection{Derivations of Complex Hom-associative dialgebras}
 Let $(\mathcal{A}, \dashv,\vdash  , \alpha)$ be a multiplicative Hom-associative dialgebra. For any nonnegative integer $k$,
 we denote by $\alpha^{k}$  the $k$-fold  composition of $\alpha$ with itself,i.e.,
 $\alpha^k=\alpha\circ\cdots\circ\alpha$ ($k$-times).  In particular, $\alpha^0=id$ and $\alpha^1=\alpha$.
\begin{definition}\label{df6}
For any non-negative integer $k$, a linear map $D : \mathcal{A}\longrightarrow \mathcal{A}$ is called an $\alpha^k-$derivation of a
Hom-associative dialgebra $(\mathcal{A}, \dashv,\vdash  , \alpha)$, if
\begin{equation}\label{eq6}
  D\circ\alpha=\alpha\circ D
\end{equation}
\begin{equation}
D\circ (f\dashv  g)=(D(f)\dashv \alpha^k(g))+(\alpha^k(f) \dashv D(g)).
\end{equation}

\begin{equation}
D\circ (f\vdash g)=(D(f)\vdash \alpha^k(g))+(\alpha^k(f) \vdash D(g)).
\end{equation}
\end{definition}

The map $D(f)$ is an $\alpha^{k+1}$-derivation, which we will call an
\textbf{inner $\alpha^{k+1}$}-derivation. In fact, we
have $D_k(f)(\alpha(g))=\alpha^{k+1}(g)\dashv f =\alpha(\alpha^k(g)
\dashv f)=\alpha\circ D_k(f)(g)$,\\
$D_k(f)(\alpha(g))=\alpha^{k+1}(g)\vdash f =\alpha(\alpha^k(g)
\vdash f)=\alpha\circ D_k(f)(g)$,

which implies that identity  (\ref{eq6})
in Definition \ref{df6} is satisfied. On the other side, we have
$$\begin{array}{ll}
D_k(f)(g\dashv h)
&=\alpha^k((g \dashv h)\dashv f)=(\alpha^k(g)\dashv\alpha^k(h))\dashv\alpha(f))\\
&=\alpha^{k+1}(g)\dashv\alpha^{k}(h))\dashv f+(\alpha^k(g)\dashv f)\dashv \alpha^{k+1}(h)\\
&=(\alpha^{k+1}(g)\dashv D_k(f)(h))+D_k(f)(g)\dashv\alpha^{k+1}(h).
\end{array}$$
Therefore, $D_k(f)$ is an $\alpha^{k+1}$-derivation.  The set of $\alpha^k$-derivations denoted by \textbf{Inner$_{\alpha^k}(A)$} are expressed in terms of
\begin{equation}
\textbf{Inner}_{\alpha^k}(A)=\left\{\alpha^{k-1}(\bullet)\dashv f|f\in A, \alpha(f)=f\right\}.
\end{equation}
\begin{equation}
\textbf{Inner}_{\alpha^k}(A)=\left\{\alpha^{k-1}(\bullet)\vdash f|f\in A, \alpha(f)=f\right\}.
\end{equation}
For any $D\in Der _{\alpha^k}(A)$ and $D'\in Der _{\alpha^s}(A)$, we define their commutator $\left[D, D'\right]$ as usual :
$\left[D,D'\right]=D\circ D'-D'\circ D$.
\begin{proposition}
For any $D\in Der_{\alpha^k}(A)$ and  $D'\in Der_{\alpha^s}(A)$, we have
$\left[D,D'\right]\in Der_{\alpha^{k+s}}(A).$
\end{proposition}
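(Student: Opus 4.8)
The plan is to verify the three defining conditions of Definition~\ref{df6} for the map $[D,D']=D\circ D'-D'\circ D$ with twisting exponent $k+s$. The one fact that drives the whole computation is that both $D$ and $D'$ commute with $\alpha$, and hence with every power of $\alpha$. So the first step is to record, by iterating the equivariance condition~(\ref{eq6}), the relations $D\circ\alpha^{s}=\alpha^{s}\circ D$ and $D'\circ\alpha^{k}=\alpha^{k}\circ D'$, which let me slide powers of $\alpha$ freely past $D$ and $D'$.

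The equivariance of the commutator is then immediate: from $D\alpha=\alpha D$ and $D'\alpha=\alpha D'$ one gets $DD'\alpha=D\alpha D'=\alpha DD'$ and similarly $D'D\alpha=\alpha D'D$, so that $[D,D']\circ\alpha=\alpha\circ[D,D']$, giving the first condition.

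For the Leibniz rule for $\dashv$, I would expand $DD'(f\dashv g)$ by applying first the $\alpha^{s}$-derivation identity for $D'$ and then the $\alpha^{k}$-derivation identity for $D$, and expand $D'D(f\dashv g)$ symmetrically. Each expansion yields four summands; after moving all powers of $\alpha$ to the outside via the commutation relations recorded above, the two diagonal summands combine to give $[D,D'](f)\dashv\alpha^{k+s}(g)$ and $\alpha^{k+s}(f)\dashv[D,D'](g)$, whereas the two cross summands of $DD'$ agree termwise with those of $D'D$ --- both reduce to $\alpha^{k}D'(f)\dashv\alpha^{s}D(g)+\alpha^{s}D(f)\dashv\alpha^{k}D'(g)$ --- and therefore cancel in the difference. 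This produces exactly the $\alpha^{k+s}$-Leibniz identity for $\dashv$, and the argument for $\vdash$ is identical with $\dashv$ replaced by $\vdash$ throughout.

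The only real difficulty is bookkeeping, since the cancellation of the cross terms rests entirely on being able to commute $\alpha$-powers past $D$ and $D'$; without the equivariance hypothesis the cross terms would not match. It is worth noting that the diassociative axioms~(\ref{eq4})--(\ref{eq8}) play no role here: a derivation is characterized purely by the two Leibniz rules together with $\alpha$-equivariance, so the statement is a formal consequence of these identities and does not touch the underlying Hom-associativity relations.
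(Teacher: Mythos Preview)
Your proof is correct and follows essentially the same route as the paper's: expand $[D,D'](f\dashv g)$ via the two Leibniz rules, use the commutation relations $D\circ\alpha^{s}=\alpha^{s}\circ D$ and $D'\circ\alpha^{k}=\alpha^{k}\circ D'$ (obtained by iterating~(\ref{eq6})) to see that the cross terms cancel, and verify $[D,D']\circ\alpha=\alpha\circ[D,D']$ separately. The only cosmetic difference is that the paper checks the $\alpha$-equivariance last rather than first, and does not spell out the $\vdash$ case or your closing remark about the diassociative axioms being unused.
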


 \begin{proof}
For any $f, g\in \mathcal{A},$ we have
$$\begin{array}{ll}
& \left[D,D'\right](f\dashv g)
=D\circ D'(f\dashv g)-D'\circ D(f\dashv g)\\
&=D(D'(f)\dashv\alpha^s(g))+\alpha^s(f)\dashv D'(g)))
-D'(\mu(D(f),\alpha^k(g))+\mu(\alpha^k(f),D(g)))\\
&=D\circ D'(f)\dashv\alpha^{k+s}(g))+\alpha^k\circ D'(f)\dashv D\circ\alpha^s(g)
+D\circ\alpha^s(f)\dashv\alpha^k\circ D'(g)\\
&+\alpha^{k+s}(f)\dashv D\circ D'(g))-D'\circ D(f)\dashv\alpha^{k+s}(g))-\alpha^s\circ D(f)\dashv D'\circ\alpha^k(g)\\
&-D'\circ\alpha^k(f)\dashv\alpha^s\circ D(g)-\alpha^{k+s}(f)\dashv D'\circ D(g).
\end{array}$$
Since $D$ and $D'$ satisfy $D\circ\alpha=\alpha\circ D,\quad D'\circ\alpha=\alpha\circ D'$, we obtain \\
$\alpha^k\circ D'=D'\circ\alpha^k, \quad D\circ\alpha^s=\alpha^s\circ D$. Therefore, we get
\begin{equation}
\left[D,D'\right](f\dashv g)=\alpha^{k+s}(f)\dashv\left[D,D'\right](g))+\left[D,D'\right](f)\dashv \alpha^{k+s}(g)).\nonumber
\end{equation}
Furthermore, it is straightforward to infer that
$$
\left[D,D'\right]\circ\alpha=D\circ D'\circ\alpha-D'\circ D\circ\alpha
=\alpha\circ D\circ D'-\alpha\circ D'\circ D
=\alpha\circ\left[D, D'\right],
$$
which yields that $\left[D, D'\right]\in Der_{\alpha^{k+s}}(\mathcal{A})$.
\end{proof}

\begin{definition}
A Hom-associative triple system is a vector space $A$ over field $\mathbb{K}$ with a trilinear multiplications satisfying
$$\begin{array}{ll}
(((x\dashv y)\dashv\alpha(z))\dashv\alpha(u))\dashv\alpha(w)
&=((\alpha(x)\dashv(y\dashv z))\dashv\alpha(u))\dashv\alpha(w))\\
&=\alpha(x)\dashv(\alpha(y)\dashv(z\dashv u)\dashv\alpha(w))),
\end{array}$$
$$\begin{array}{ll}
(((x\vdash y)\vdash\alpha(z))\vdash\alpha(u))\vdash\alpha(w)
&=((\alpha(x)\vdash(y\vdash z))\vdash\alpha(u))\vdash\alpha(w))\\
&=\alpha(x)\vdash(\alpha(y)\vdash(z\vdash u)\vdash\alpha(w))),
\end{array}$$
for any $x,y,z,u,w\in \mathcal{A}$.
\end{definition}

\begin{definition}
An  associative triple derivation of Hom-associative dialgebra $(A, \dashv, \vdash, \alpha)$ is a linear transformation $D : \mathcal{A}\longrightarrow \mathcal{A}$
such that $$D\circ\alpha=\alpha\circ D$$
$$\begin{array}{ll}
D\circ((x\dashv y)\dashv z))
&=(D(x)\dashv\alpha^k(y))\dashv\alpha^k(z)\\
&+(\alpha^k(x)\dashv D(y))\dashv\alpha^k(z)+(\alpha^k(x)\dashv\alpha^k(y))\dashv D(z)
\end{array}$$
$$\begin{array}{ll}
D\circ((x\vdash y)\vdash z))
&=(D(x)\vdash\alpha^k(y))\vdash\alpha^k(z)\\
&+(\alpha^k(x)\vdash D(y))\vdash\alpha^k(z)+(\alpha^k(x)\vdash\alpha^k(y))\vdash D(z)
\end{array}$$
for $x, y, z\in \mathcal{A}.$
\end{definition}

\begin{definition}
A Jordan associative triple derivation of Hom-associative $(\mathcal{A}, \dashv, \vdash \alpha)$ is a linear
transformation $D' : \mathcal{A}\longrightarrow A$ such that $$D'\circ\alpha=\alpha\circ D'$$ and
$$\begin{array}{ll}
D'\circ((x\dashv y)\dashv x))
&=(D'(x)\dashv\alpha^k(y))\dashv\alpha^k(x)\\
&+(\alpha^k(x)\dashv D'(y))\dashv\alpha^k(x)+(\alpha^k(x)\dashv\alpha^k(y))\dashv D'(x)
\end{array}$$
$$\begin{array}{ll}
D'\circ((x\vdash y)\vdash x))
&=(D'(x)\vdash\alpha^k(y))\vdash\alpha^k(x)\\
&+(\alpha^k(x)\vdash D'(y))\vdash\alpha^k(x)+(\alpha^k(x)\vdash\alpha^k(y))\vdash D'(x)
\end{array}$$
for $x, y\in \mathcal{A}.$
\end{definition}

\begin{proposition}\label{pro1}
D is an associative triple derivation of $(\mathcal{A}, \dashv, \vdash,  \alpha)$ if and only
if D is a Jordan triple derivation of $(\mathcal{A}, \dashv, \vdash \alpha)$ such
that $A(x,y,z)+A(y,z,x)+A(z,x,y)=0$ with $A(x,y,z)=(\alpha\circ D)\circ(x\dashv y)\dashv z)$.
\end{proposition}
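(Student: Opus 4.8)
The plan is to read the proposition as a polarization (linearization) result about the trilinear \emph{defect} that measures how far $D$ is from commuting with the iterated product. For the left product I set
\[
A(x,y,z)=D\bigl((x\dashv y)\dashv z\bigr)-\bigl(D(x)\dashv\alpha^k(y)\bigr)\dashv\alpha^k(z)-\bigl(\alpha^k(x)\dashv D(y)\bigr)\dashv\alpha^k(z)-\bigl(\alpha^k(x)\dashv\alpha^k(y)\bigr)\dashv D(z),
\]
and define the analogous defect for $\vdash$. Since $D$, $\alpha^k$ and both products are linear, $A$ is trilinear in $(x,y,z)$. With the common condition $D\circ\alpha=\alpha\circ D$ assumed throughout, $D$ is an associative triple derivation exactly when $A\equiv 0$, while $D$ is a Jordan triple derivation exactly when $A(x,y,x)=0$ for all $x,y$. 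This reformulation turns the statement into a purely multilinear fact, which I would prove for $\dashv$ and then repeat verbatim for $\vdash$.

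First I would dispatch the forward implication, which is immediate. If $D$ is an associative triple derivation then $A\equiv 0$; specializing $z=x$ gives $A(x,y,x)=0$, so $D$ is a Jordan triple derivation, and the cyclic sum $A(x,y,z)+A(y,z,x)+A(z,x,y)$ is a sum of three zeros and hence vanishes.

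For the converse, assume $D$ is a Jordan triple derivation, i.e. $A(x,y,x)=0$ identically. Here I would polarize by replacing $x$ with $x+z$ and expanding by trilinearity, which yields
\[
A(x,y,x)+A(x,y,z)+A(z,y,x)+A(z,y,z)=0.
\]
The first and last terms vanish by the Jordan hypothesis, and since $\mathbb{K}$ has characteristic $0$ no spurious factor of $2$ obstructs the step, so I obtain the outer-slot antisymmetry $A(x,y,z)+A(z,y,x)=0$.

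The heart of the argument, and the step I expect to be the main obstacle, is to combine this antisymmetry with the hypothesized cyclic identity $A(x,y,z)+A(y,z,x)+A(z,x,y)=0$ so as to force $A\equiv 0$. Concretely I would evaluate both relations on the six permutations of a triple $(x,y,z)$, use the antisymmetry to eliminate the three ``reversed'' permutations $A(x,z,y),A(z,y,x),A(y,x,z)$ in favour of $A(x,y,z),A(y,z,x),A(z,x,y)$, and then read off the linear system that the cyclic hypothesis imposes on these three quantities. The delicate point is that the cyclic relation and the polarized relation must genuinely \emph{close up} on each permutation class rather than merely reproduce one another; verifying that the resulting system over a field of characteristic $0$ has only the trivial solution is where I would spend the most care, since this is exactly the place where the implication can degenerate into a single redundant constraint. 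Once $A(x,y,z)=0$ is established for all $x,y,z$, and the identical computation is carried out for the right product $\vdash$, the converse follows and the equivalence is proved.
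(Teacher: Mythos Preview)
Your forward direction is fine under your reading of $A$ as the trilinear defect: if $D$ is an associative triple derivation then $A\equiv 0$, so both the Jordan specialization $A(x,y,x)=0$ and the cyclic sum are trivially satisfied.

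The gap is exactly where you suspected it would be. Polarizing the Jordan condition $A(x,y,x)=0$ yields only the outer-slot antisymmetry $A(x,y,z)=-A(z,y,x)$; since $A$ is already linear in the middle slot there is no further independent relation to extract. Combined with the cyclic hypothesis $A(x,y,z)+A(y,z,x)+A(z,x,y)=0$, this does \emph{not} force $A\equiv 0$. Concretely, for a fixed unordered triple write $a=A(x,y,z)$, $b=A(y,z,x)$, $c=A(z,x,y)$; the antisymmetry determines the three reversed values as $-a,-b,-c$, and applying the cyclic relation to \emph{any} permutation of $(x,y,z)$ returns only the single constraint $a+b+c=0$ (the relation for the odd permutations is just the negative of this one). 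The solution space on each triple is two-dimensional, so trilinear maps satisfying both the Jordan and the cyclic condition need not vanish. Your linear-algebra step therefore cannot close, and nothing in the Hom-dialgebra axioms or in $D\circ\alpha=\alpha\circ D$ supplies the missing constraint at this level of generality.

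For comparison, the paper reads $A(x,y,z)$ literally as $\alpha\bigl(D((x\dashv y)\dashv z)\bigr)$ rather than as a defect, and its proof treats only the direction ``associative triple derivation $\Rightarrow$ Jordan triple derivation plus cyclic identity'': it expands $D((x\dashv y)\dashv z)$ via the associative triple rule, sums over cyclic permutations, and regroups the resulting nine terms. The converse is not argued there either, so your attempt actually goes further than the paper in trying to close the biconditional --- but the polarization-plus-cyclic route cannot do it.
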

\begin{proof}
If D is a Jordan triple derivation of $(\mathcal{A}, \dashv, \vdash,  \alpha)$, then $1$ following immediately. $2$ holds because
$$\begin{array}{ll}
& A(x,y,z)+A(y,z,x)+A(z,x,y)=\\
&=\alpha((D(x)\dashv\alpha^k(y))\dashv\alpha^k(z))+(\alpha^k(x)\dashv
D(y))\dashv\alpha^k(z)+(\alpha^k(x)\dashv\alpha^k(y))\dashv D(z))\\
&+\alpha((D(y)\dashv\alpha^k(z))\dashv\alpha^k(x)+(\alpha^k(y))\dashv
D(z))\dashv\alpha^k(x)+(\alpha^k(y)\dashv\alpha^k(z))\dashv D(x))\\
&+\alpha((D(z)\dashv\alpha^k(x))\dashv\alpha^k(y))+(\alpha^k(z)\dashv D(x))\dashv\alpha^k(y)+(\alpha^k(z)\dashv\alpha^k(x))\dashv D(y))\\
&=\alpha((D(x)\dashv\alpha^k(y))\dashv\alpha^k(z))+(\alpha^k(y)\dashv\alpha^k(z))\dashv D(x)+(\alpha^k(z)\dashv D(x))\dashv\alpha^k(y)\\
&+\alpha((\alpha^k(x)\dashv D(y))\dashv\alpha^k(z))+(D(y)\dashv\alpha^k(y))\dashv\alpha^k(x)
+(\alpha^k(z)\dashv\alpha^k(x))\dashv D(y)\\
&+\alpha((\alpha^k(x)\dashv\alpha^k(y))\dashv D(z))+(\alpha^k(y)\dashv D(z))\dashv\alpha^k(x)+(D(z)\dashv\alpha^k(x))\dashv\alpha^k(y)\\
&=0.
\end{array}$$
Therefore, D is a generalized associative triple derivation of Hom-associative $(\mathcal{A}, \dashv, \vdash,  \alpha)$.
\end{proof}

\begin{proposition}
D is an associative triple derivation of $(\mathcal{A}, \dashv, \vdash,  \alpha)$ with respect to associative derivation $\delta$ if
and only if $D$ is Jordan triple derivation of $(A, \dashv, \vdash,  \alpha)$ with respect to a Jordan triple derivation $\delta$ such that
$$
(\alpha^k(x)\dashv\alpha^k(y))\dashv(D-\delta)(z))+(\alpha^k(y)\dashv\alpha^k(z))\dashv(D-\delta)(x))
+(\alpha^k(z)\dashv\alpha^k(x))\dashv(D-\delta)(y))=0,
$$
with $B(x,y,z)=\alpha((\delta(x)\dashv\alpha^k(y)\dashv\alpha^k(z))+(\alpha^k(x)\dashv\delta(y))
\dashv\alpha^k(z))+((\alpha^k(x)\dashv\alpha^k(y))\dashv\delta(z)).$
\end{proposition}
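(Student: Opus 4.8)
The plan is to run the argument in close parallel with Proposition \ref{pro1}, carrying the extra bookkeeping forced by the second map $\delta$, and to treat the whole computation inside the Hom-associative triple system attached to $\dashv$ so that the triple-system axioms from the preceding definition are available to re-associate every doubly-bracketed product once $\alpha$ has been applied. I would first record the two relative identities in play: that $D$ is an associative triple derivation with respect to $\delta$ means
$$
D\big((x\dashv y)\dashv z\big)=(D(x)\dashv\alpha^k(y))\dashv\alpha^k(z)+(\alpha^k(x)\dashv\delta(y))\dashv\alpha^k(z)+(\alpha^k(x)\dashv\alpha^k(y))\dashv\delta(z),
$$
while the Jordan relative identity is the specialisation of this to $z=x$. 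Since $\dashv$ and $\alpha$ coincide on both sides of the diagonal, the forward implication delivers the Jordan property for free.

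For the cyclic condition in the forward direction I would compare $A(x,y,z)=(\alpha\circ D)\big((x\dashv y)\dashv z\big)$ with $B(x,y,z)$. Because $\delta$ is itself an associative triple derivation, Proposition \ref{pro1} applied to $\delta$ gives $B(x,y,z)+B(y,z,x)+B(z,x,y)=0$. On the other hand, applying $\alpha$ to the relative identity above and subtracting $B$ makes the two slots carrying $\delta$ cancel, leaving $A(x,y,z)-B(x,y,z)=\alpha\big(((D-\delta)(x)\dashv\alpha^k(y))\dashv\alpha^k(z)\big)$. Summing cyclically and re-associating through the triple-system axioms — the same nine-term collapse as in Proposition \ref{pro1} — converts this into the stated cyclic sum $\sum_{\mathrm{cyc}}(\alpha^k(x)\dashv\alpha^k(y))\dashv(D-\delta)(z)$, which is therefore forced to vanish.

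For the converse I would reverse the computation by \emph{polarisation}. Writing the Jordan relative identity for the argument $x+z$ and stripping off the two pure diagonal identities for $x$ and for $z$ yields the symmetric associative-type identity in which $D((x\dashv y)\dashv z)$ and $D((z\dashv y)\dashv x)$ appear together; this supplies exactly the symmetric half of the associative relative identity. Adding the assumed vanishing of $\sum_{\mathrm{cyc}}(\alpha^k(x)\dashv\alpha^k(y))\dashv(D-\delta)(z)$ restores the antisymmetric half that polarisation discards, and the two halves reassemble the full relative associative identity, closing the equivalence.

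The main obstacle I anticipate is purely organisational: tracking, through the cyclic sums and the polarisation, which of the nine terms carries $D$ and which carries $\delta$, and confirming that the mismatch between the polarised Jordan identity and the genuine associative identity is precisely the cyclic sum on $(D-\delta)$ and nothing more. The triple-system axioms are what guarantee that the $\alpha^k$-weighted, purely-$\delta$ contributions pair off and cancel via $\sum_{\mathrm{cyc}}B=0$; the delicate point is verifying that no residual cross term between $D$ and $\delta$ escapes that cancellation.
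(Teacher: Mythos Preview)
Your plan is sound and, in fact, more complete than what the paper does. The paper's proof is essentially a carbon copy of its proof of Proposition~\ref{pro1}: it assumes $D$ is Jordan with respect to $\delta$, expands $B(x,y,z)+B(y,z,x)+B(z,x,y)$ into nine terms, regroups them into three cyclic triples, asserts that each triple vanishes, and then closes by invoking Proposition~\ref{pro1} to upgrade $\delta$ (and hence $D$) to the associative triple identity. It does not isolate the difference $A-B$, does not explicitly derive the displayed $(D-\delta)$ cyclic condition from $\sum B=0$, and does not carry out any polarisation for the converse.

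What you add --- the $A-B$ splitting to extract the $(D-\delta)$ contribution, and the polarisation $x\mapsto x+z$ to recover the full associative identity from the Jordan one --- is genuine extra content. The paper's argument leaves exactly the gap you identify as ``the delicate point'': it never checks that the mismatch between the Jordan and associative identities is precisely the stated cyclic sum on $D-\delta$. Your route makes that explicit, at the cost of the additional bookkeeping you flag. Both approaches lean on the same nine-term regrouping inside the triple system; yours simply does the accounting the paper skips.
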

\begin{proof}
If D is a Jordan triple derivation of  $(\mathcal{A}, \dashv, \vdash,  \alpha)$, then $1$ follows immediately. $2$ holds because
$$\begin{array}{ll}
& B(x,y,z)+B(y,z,x)+B(z,x,y)=\\
&=\alpha((\delta(x)\dashv\alpha^k(y))\dashv\alpha^k(z))+(\alpha^k(x)\dashv\delta(y))\dashv\alpha^k(z))+(\alpha^k(x)\dashv
\alpha^k(y))\dashv \delta(z)))\\
&+\alpha((\delta(y)\dashv\alpha^k(z))\dashv\alpha^k(x))+(\alpha^k(y)\dashv\delta(z))\dashv\alpha^k(x))+
(\alpha^k(y)\dashv\alpha^k(z))\dashv \delta(x)))\\
&+\alpha((\delta(z)\dashv\alpha^k(x))\dashv \alpha^k(y))+(\alpha^k(z)\dashv\delta(x))\dashv\alpha^k(y))+(\alpha^k(z)\dashv
\alpha^k(x))\dashv \delta(y)))\\
&=\alpha((\delta(x)\dashv\alpha^k(y))\dashv\alpha^k(z))+(\alpha^k(x)\dashv\delta(y))\dashv\alpha^k(z))
+(\alpha^k(x)\dashv\alpha^k(y))\dashv\delta(z)))\\
&+\alpha((\delta(y)\dashv\alpha^k(z))\dashv\alpha^k(x))+(\alpha^k(y)\dashv\delta(z))\dashv
\alpha^k(x))+(\alpha^k(y)\dashv\alpha^k(z))\dashv\delta(x)))\\
&+\alpha((\delta(z)\dashv\alpha^k(x))\dashv\alpha^k(y))+(\alpha^k(z)\dashv\delta(x))\dashv\alpha^k(y))
+(\alpha^k(z)\dashv\alpha^k(x))\dashv\delta(y)))\\
&=\alpha((\delta(x)\dashv\alpha^k(y))\dashv\alpha^k(z))+(\alpha^k(y)\dashv\alpha^k(z))\dashv\delta(x))+(\alpha^k(z)
\dashv\delta(x))\dashv\alpha^k(y)))\\
&+\alpha((\alpha^k(x)\dashv\delta(y))\dashv\alpha^k(z))+(\delta(y)\dashv\alpha^k(z))\dashv\alpha^k(x))+
(\alpha^k(z)\dashv\alpha^k(x))\dashv\delta(y)))\\
&+\alpha((\alpha^k(x)\dashv\alpha^k(y))\dashv\delta(z))+(\alpha^k(y)\dashv\delta(z))\dashv\alpha^k(x))
+(\delta(z)\dashv\alpha^k(x))\dashv\alpha^k(y)))\\
&=0.
\end{array}$$
Note that $\delta$ is an associative derivation referring to Proposition \ref{pro1}. Therefore, D is a generalized associative triple
derivation of $(\mathcal{A}, \dashv, \vdash,  \alpha)$ with respect to an associative derivation $\delta$ according to Proposition \ref{pro1}.
\end{proof}

\begin{definition}\label{dia2}
An $\alpha$-derivation of the BiHom-associative trialgebra $\mathcal{A}$ is a linear transformation
$d : \mathcal{A} \rightarrow \mathcal{A}$ satisfying
\begin{eqnarray}
\alpha\circ d=d\circ\alpha&,&\\
d(x\dashv y)&=&d(x)\dashv\alpha(y)+\alpha(x)\dashv d(y)\\
d(x\vdash y)&=&d(x)\vdash\alpha(y)+\alpha(x)\vdash d(y),
\end{eqnarray}
for all $x, y\in \mathcal{A}.$
 \end{definition}

\subsection{Derivations of complex Hom-associative dialgebras}
This section illustrates in depth, $\alpha$-derivation of Hom-associative dialgebras in dimension two and three over the field $\mathbb{K}.$ Let
$\left\{e_1,e_2, e_3,\cdots, e_n\right\}$ be a basis of an $n$-dimensional Hom-associative dialgebra $\mathcal{A}.$ The product of basis is denoted by
\begin{eqnarray}
d(e_p)=\sum_{q=1}^nd_{qp}e_q\nonumber.
\end{eqnarray}

We have
\begin{eqnarray}
\sum_{p=1}^nd_{pk}a_{qp}=\sum_{p=1}^na_{pk}d_{qp}& ,&\label{deq1}\\
\sum_{k=1}^n\gamma_{ij}^pd_{rp}=\sum_{k=1}^n\sum_{p=1}^nd_{ki}a_{pj}\gamma_{kp}^q&+&\sum_{k=1}^n\sum_{p=1}a_{ki}d_{pj}\gamma_{kp}^q\label{deq2},\\
\sum_{k=1}^n\delta_{ij}^pd_{rp}=\sum_{k=1}^n\sum_{p=1}^nd_{ki}a_{pj}\delta_{kp}^q&+&\sum_{k=1}^n\sum_{p=1}a_{ki}d_{pj}\delta_{kp}^q\label{deq3}.
\end{eqnarray}

\begin{theorem}
The derivations of 2-dimensional Hom-associative dialgebras have the following form
\end{theorem}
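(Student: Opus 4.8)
The plan is to use the fact that Theorem \ref{Theo1} supplies an exhaustive list of representatives $\mathcal{H}d_2^1,\dots,\mathcal{H}d_2^9$ of the isomorphism classes, so it suffices to compute the space of derivations of each representative separately and then read off the common form. For a fixed representative with basis $\{e_1,e_2\}$ I would write an arbitrary linear map as
$$D(e_1)=d_{11}e_1+d_{21}e_2,\qquad D(e_2)=d_{12}e_1+d_{22}e_2,$$
identifying $D$ with the matrix $(d_{qp})$, and then translate the $\alpha$-derivation conditions of Definition \ref{dia2} into the coordinate equations \eqref{deq1}--\eqref{deq3}: equation \eqref{deq1} encodes $D\circ\alpha=\alpha\circ D$, while \eqref{deq2} and \eqref{deq3} encode the Leibniz rules for $\dashv$ and $\vdash$ respectively.

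First I would impose the commutation constraint \eqref{deq1}. Because the structure map $\alpha$ of each representative is already in Jordan form, this step is immediate and splits into a few buckets. When $\alpha$ is the identity (as in $\mathcal{H}d_2^1$, $\mathcal{H}d_2^6$, $\mathcal{H}d_2^7$) the constraint is vacuous and every $D$ is admissible at this stage; when $\alpha$ has two distinct eigenvalues (as in $\mathcal{H}d_2^3$) it forces $D$ into the two-dimensional commutant of $\alpha$; and when $\alpha$ is a single Jordan block $\left(\begin{smallmatrix}a&1\\0&a\end{smallmatrix}\right)$ or the rank-one nilpotent map $e_2\mapsto e_1,\ e_1\mapsto 0$ (the remaining classes) it forces $D$ to be upper triangular with equal diagonal entries, i.e. $d_{21}=0$ and $d_{11}=d_{22}$. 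This already cuts the four unknowns down to at most two free parameters in every case.

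Next, for each representative I would substitute the surviving shape of $D$ into the product Leibniz conditions \eqref{deq2}--\eqref{deq3}, evaluated on exactly those basis pairs $(e_i,e_j)$ for which $e_i\dashv e_j$ or $e_i\vdash e_j$ is nonzero, since bilinearity disposes of the rest. Each such pair contributes one vector identity, hence two scalar equations in the remaining $d_{qp}$; collecting them yields a homogeneous linear system whose solution space is precisely the space of derivations of that algebra. Choosing a basis of each solution space, case by case, then produces the tabulated forms asserted in the statement.

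The main obstacle is bookkeeping rather than anything conceptual. The parametric class $\mathcal{H}d_2^9$ carries structure constants $a,b,c,f,g,k$, so its Leibniz system has coefficients depending on these parameters, and one must track for which parameter values the rank of the system drops, since those loci carry extra derivations. The other delicate point is consistency: several representatives have overlapping left and right products, so the $\dashv$- and $\vdash$-conditions are not independent, and I would verify that the combined system remains consistent and does not collapse to $D=0$ except in the cases where the table records it.
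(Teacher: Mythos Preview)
Your proposal is correct and follows essentially the same approach as the paper: write a generic endomorphism $D=(d_{qp})$, impose the coordinate equations \eqref{deq1}--\eqref{deq3} coming from $D\circ\alpha=\alpha\circ D$ and the two Leibniz rules, and solve the resulting linear system for each representative in Theorem~\ref{Theo1}. The paper in fact gives no separate proof for the $2$-dimensional statement; its proofs of the $3$- and $4$-dimensional analogues (Theorems~\ref{dthieo2} and~\ref{dthieo3}) do exactly what you describe, only without your preliminary step of exploiting the Jordan form of $\alpha$ to cut down the unknowns before imposing the Leibniz conditions---a harmless organizational refinement on your part.
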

\begin{tabular}{||c||c||c||c||c||c||c||c||c||c||c||c||}
\hline
IC&Der$(d)$ &$Dim(d)$&IC&Der$(d)$&$Dim(d)$\\
			\hline
$\mathcal{TH}_2^{4}$&
$\left(\begin{array}{cccc}
0&0\\
d_{21}&0
\end{array}
\right)$
&
1
&
$\mathcal{TH}_2^{5}$&
$\left(\begin{array}{cccc}
0&0\\
d_{21}&0
\end{array}
\right)$
&
1
\\ \hline
$\mathcal{TH}_2^{6}$&
$\left(\begin{array}{cccc}
0&0\\
d_{21}&0
\end{array}
\right)$
&
1
&
$\mathcal{TH}_2^{8}$&
$\left(\begin{array}{cccc}
0&0\\
d_{21}&0
\end{array}
\right)$
&
1
\\ \hline
\end{tabular}


\begin{theorem}\label{dthieo2}
The derivations of $3$-dimensional Hom-associative dialgebras have the following form
\end{theorem}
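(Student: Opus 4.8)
The plan is to carry out a direct case-by-case computation for each of the fourteen algebras $\mathcal{H}d_3^1,\dots,\mathcal{H}d_3^{14}$ classified in Theorem \ref{td2}, following the scheme encoded in the system (\ref{deq1})--(\ref{deq3}). For a fixed algebra I would write the unknown linear map in the basis $\{e_1,e_2,e_3\}$ as $D(e_p)=\sum_{q=1}^{3}d_{qp}e_q$, so that $D$ is represented by a $3\times 3$ matrix $(d_{qp})$ with nine unknown entries, and then impose the defining conditions of an $\alpha$-derivation from Definition \ref{df6} in the case $k=1$, namely $D\circ\alpha=\alpha\circ D$ together with the Leibniz rules for $\dashv$ and $\vdash$.

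First I would exploit the commutation constraint $D\circ\alpha=\alpha\circ D$, which is equation (\ref{deq1}). In each of the listed algebras $\alpha$ is extremely sparse (it either fixes basis vectors or collapses one basis vector onto $e_1$), so this relation is the cheapest to solve and immediately forces a number of entries $d_{qp}$ to vanish. I would then expand $D(e_i\dashv e_j)$ and $D(e_i\vdash e_j)$ for each ordered pair and match coordinates against $D(e_i)\dashv\alpha(e_j)+\alpha(e_i)\dashv D(e_j)$ and its $\vdash$-analogue; every nonzero product contributes three scalar equations, giving the homogeneous system (\ref{deq2})--(\ref{deq3}) in the surviving unknowns. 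Solving this system produces the space $Der(d)$ and its dimension, which are then recorded in the table.

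The obstacle here is organisational rather than conceptual: the computation must be repeated fourteen times, and keeping the elimination consistent across all the sparse products is where arithmetic slips occur. The genuinely delicate cases are $\mathcal{H}d_3^{11}$--$\mathcal{H}d_3^{14}$, whose structure constants carry free parameters $a,b,c,\dots$ as well as the algebraic numbers $(-1)^{1/3}$, $(-1)^{2/3}$ and $i/\sqrt{3}$. There the coefficient-matching equations live over an extension of $\mathbb{Q}$, and one has to verify that the derivation conditions are compatible with the given parameter values rather than forcing new relations among them. I would handle these by first solving for generic parameters and then inspecting the special loci on which extra solutions appear, since the dimension of $Der(d)$ can jump there. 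For the sparser algebras $\mathcal{H}d_3^1$--$\mathcal{H}d_3^{10}$, once the entries killed by (\ref{deq1}) are substituted, the remaining system is routine linear algebra and the tabulated forms follow directly.
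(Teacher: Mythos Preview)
Your proposal is correct and follows essentially the same approach as the paper: a case-by-case linear-algebra computation in which one writes $D(e_p)=\sum_q d_{qp}e_q$, imposes the commutation condition (\ref{deq1}) and the two Leibniz conditions (\ref{deq2})--(\ref{deq3}), and solves the resulting homogeneous system for each algebra from Theorem \ref{td2}. The paper's proof is less detailed (it works out only $\mathcal{TH}_3^{3}$ explicitly and asserts the rest is analogous), so your more careful treatment of the parametric families $\mathcal{H}d_3^{11}$--$\mathcal{H}d_3^{14}$ is, if anything, an improvement in rigor rather than a different method.
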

\begin{tabular}{||c||c||c||c||c||c||c||c||c||c||c||c||}
\hline
IC&Der$(d)$ &$Dim(d)$&IC&Der$(d)$&$Dim(d)$\\
			\hline
$\mathcal{TH}_3^{1}$&
$\left(\begin{array}{cccc}
0&0&0\\
0&0&0\\
d_{31}&0&d_{33}
\end{array}
\right)$
&
2
&
$\mathcal{TH}_3^{2}$&
$\left(\begin{array}{cccc}
0&0&0\\
0&0&0\\
d_{31}&0&d_{33}
\end{array}
\right)$
&
2
\\ \hline
$\mathcal{TH}_3^{3}$&
$\left(\begin{array}{cccc}
0&0&0\\
d_{21}&0&d_{23}\\
d_{31}&0&d_{33}
\end{array}
\right)$
&
4
&
$\mathcal{TH}_3^{4}$&
$\left(\begin{array}{cccc}
0&0&0\\
0&d_{22}&d_{23}\\
0&d_{32}&d_{33}
\end{array}
\right)$
&
4
\\ \hline
$\mathcal{TH}_3^{5}$&
$\left(\begin{array}{cccc}
0&0&0\\
0&0&0\\
0&d_{32}&d_{33}
\end{array}
\right)$
&
2
&
$\mathcal{TH}_3^{8}$&
$\left(\begin{array}{cccc}
0&0&0\\
0&0&0\\
0&d_{32}&d_{33}
\end{array}
\right)$
&
2
\\ \hline
$\mathcal{TH}_3^{10}$&
$\left(\begin{array}{cccc}
0&0&0\\
d_{21}&0&d_{23}\\
0&0&0
\end{array}
\right)$
&
2
&
$\mathcal{TH}_3^{11}$&
$\left(\begin{array}{cccc}
0&0&0\\
d_{21}&0&d_{23}\\
0&0&0
\end{array}
\right)$
&
2
\\ \hline
$\mathcal{TH}_3^{12}$&
$\left(\begin{array}{cccc}
0&0&0\\
0&0&d_{23}\\
0&0&0
\end{array}
\right)$
&
1
&
$\mathcal{TH}_3^{13}$&
$\left(\begin{array}{cccc}
0&0&0\\
0&0&0\\
0&d_{32}&0
\end{array}
\right)$
&
1
\\ \hline
\end{tabular}
\begin{proof}
Departing from Theorem \ref{dthieo2}, we provide the proof only for one case to illustrate the used approach, the other cases can be addressed similarly with or without
modification(s). Let's consider $\mathcal{TH}_3^{3}$. Applying the systems of equations (\ref{deq1}), (\ref{deq2}) and (\ref{deq3}), we get
$d_{11}=d_{12}=d_{13}=d_{22}=d_{32}=0$. Hence, the derivations of $\mathcal{TH}_3^{3}$ are indicated as follows\\
$d(e_1)=\left(\begin{array}{cccc}
0&0&0\\
1&0&0\\
0&0&0
\end{array}
\right)$,
$d(e_2)=\left(\begin{array}{cccc}
0&0&0\\
0&0&0\\
1&0&0
\end{array}
\right)$,
$d(e_3)=\left(\begin{array}{cccc}
0&0&0\\
0&0&1\\
0&0&0
\end{array}
\right)$,
and
$d(e_4)=\left(\begin{array}{cccc}
0&0&0\\
0&0&0\\
0&0&1
\end{array}
\right)$
is the basis of $Der(Cent(\mathcal{A}))$ and Dim$Der(Cent(\mathcal{A}))=4.$ The derivations of the remaining parts of dimension
two associative dialgebras can be handled in a similar manner as illustrated above.
\end{proof}

\begin{theorem}\label{dthieo3}
The derivations of $4$-dimensional Hom-associative dialgebras have the following form
\end{theorem}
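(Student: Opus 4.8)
The plan is to proceed exactly as in the proof of Theorem \ref{dthieo2}, treating the four-dimensional case as a systematic but larger computation built on the classification already established in Theorem \ref{td3}. For each of the pairwise non-isomorphic representatives $\mathcal{H}d_4^i$ listed there, the structure constants $\gamma_{ij}^k$, $\delta_{ij}^k$ and $a_{ij}$ are fully explicit, so the task reduces to solving, for each representative separately, the homogeneous linear system that a derivation must satisfy.

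First I would fix a representative and write the unknown derivation as $d(e_p)=\sum_{q=1}^{4} d_{qp} e_q$, so that $d$ is encoded by a $4\times 4$ matrix $(d_{qp})$ with sixteen unknown entries. Substituting the structure constants of the chosen representative into the three coordinate identities (\ref{deq1}), (\ref{deq2}) and (\ref{deq3}) --- where (\ref{deq1}) expresses the commutation $d\circ\alpha=\alpha\circ d$ and (\ref{deq2}), (\ref{deq3}) encode the Leibniz rules for $\dashv$ and $\vdash$ respectively --- produces a homogeneous linear system in the $d_{qp}$. Solving this system by elimination yields the general derivation: the surviving free entries describe the matrix form to be displayed in the table, and their number is the value recorded in the dimension column.

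Then I would carry out this elimination representative by representative, for every $\mathcal{H}d_4^i$ in Theorem \ref{td3}, recording in each case the resulting matrix shape and its dimension. To keep the argument readable I would present one representative in full detail, exactly as the single worked case $\mathcal{TH}_3^3$ was displayed for the three-dimensional theorem, and then assert that the remaining representatives are handled by the identical procedure, since the only change from one case to the next is the numerical input of structure constants.

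The main obstacle I expect is not conceptual but organizational: the volume of cases (one linear system per representative) together with the bookkeeping inside each system, where the two products $\dashv$ and $\vdash$ contribute two separate families of Leibniz equations that must then be intersected with the $\alpha$-commutation constraint. A secondary difficulty is the presence of free parameters ($a,b,c,\dots$) in several of the representatives of Theorem \ref{td3}: for those I would have to check whether the rank of the linear system, and hence the derivation dimension, stays constant, or whether special parameter values enlarge the solution space, handling any such degenerate loci separately.
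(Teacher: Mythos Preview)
Your proposal is correct and follows essentially the same approach as the paper: the paper also works one representative in detail (it chooses $\mathcal{TH}_4^{1}$, solves the linear system coming from (\ref{deq1})--(\ref{deq3}) to obtain $d_{11}=d_{12}=d_{13}=d_{14}=d_{22}=d_{24}=d_{31}=d_{32}=d_{33}=d_{34}=d_{42}=d_{44}=0$, exhibits the resulting basis, reads off the dimension) and then asserts that the remaining representatives are handled identically. Your remark about parametric families is in fact more careful than the paper, which does not single out the parameter-dependent cases.
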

\begin{tabular}{||c||c||c||c||c||c||c||c||c||c||c||c||}
\hline
IC&Der$(d)$ &$Dim(d)$&IC&Der$(d)$&$Dim(d)$\\
			\hline
$\mathcal{TH}_4^{1}$&
$\left(\begin{array}{ccccc}
0&0&0&0\\
d_{21}&0&d_{23}&0\\
0&0&0&0\\
d_{41}&0&d_{43}&0\\
\end{array}
\right)$
&
4
&
$\mathcal{TH}_4^{2}$&
$\left(\begin{array}{cccc}
0&0&0&0\\
d_{21}&0&d_{23}&0\\
0&0&0&0\\
d_{41}&0&d_{43}&0\\
\end{array}
\right)$
&
2
\\ \hline
$\mathcal{TH}_4^{3}$&
$\left(\begin{array}{cccc}
0&0&0&0\\
d_{21}&0&d_{23}&0\\
0&0&0&0\\
d_{41}&0&d_{43}&0\\
\end{array}
\right)$
&
4
&
$\mathcal{TH}_4^{4}$&
$\left(\begin{array}{cccc}
0&0&0&0\\
d_{21}&0&d_{23}&0\\
0&0&0&0\\
d_{41}&0&d_{43}&0\\
\end{array}
\right)$
&
4
\\ \hline
$\mathcal{TH}_4^{5}$&
$\left(\begin{array}{cccc}
0&0&0&0\\
d_{21}&0&d_{23}&0\\
0&0&0&0\\
d_{41}&0&d_{43}&0\\
\end{array}
\right)$
&
4
&
$\mathcal{TH}_4^{6}$&
$\left(\begin{array}{cccc}
0&0&0&0\\
d_{21}&0&d_{23}&0\\
0&0&0&0\\
d_{41}&0&d_{43}&0\\
\end{array}
\right)$
&
4
\\ \hline
$\mathcal{TH}_4^{7}$&
$\left(\begin{array}{cccc}
d_{11}&d_{12}&0&0\\
0&0&0&0\\
0&0&0&d_{34}\\
0&0&0&0\\
\end{array}
\right)$
&
3
&
$\mathcal{TH}_4^{8}$&
$\left(\begin{array}{cccc}
d_{11}&d_{12}&0&0\\
0&0&0&0\\
0&0&0&d_{34}\\
0&0&0&0\\
\end{array}
\right)$
&
3
\\ \hline
$\mathcal{TH}_4^{9}$&
$\left(\begin{array}{cccc}
0&0&0&0\\
0&0&0&0\\
0&0&0&0\\
d_{41}&0&0&0\\
\end{array}
\right)$
&
1
&
$\mathcal{TH}_4^{10}$&
$\left(\begin{array}{cccc}
0&0&0&0\\
0&0&0&0\\
d_{31}&0&0&0\\
0&0&0&0\\
\end{array}
\right)$
&
1
\\ \hline
$\mathcal{TH}_4^{11}$&
$\left(\begin{array}{cccc}
0&0&0&0\\
d_{21}&0&0&0\\
0&0&0&0\\
0&0&d_{43}&0\\
\end{array}
\right)$
&
1
&
$\mathcal{TH}_4^{12}$&
$\left(\begin{array}{cccc}
0&0&0&0\\
0&0&0&0\\
d_{31}&0&0&0\\
0&0&0&0\\
\end{array}
\right)$
&
1
\\ \hline
$\mathcal{TH}_4^{13}$&
$\left(\begin{array}{cccc}
0&0&0&0\\
0&0&0&0\\
0&0&0&0\\
d_{41}&0&0&0\\
\end{array}
\right)$
&
1
&
$\mathcal{TH}_4^{14}$&
$\left(\begin{array}{cccc}
0&0&0&0\\
0&0&0&0\\
0&0&0&0\\
d_{41}&0&0&0\\
\end{array}
\right)$
&
1
\\ \hline
$\mathcal{TH}_4^{15}$&
$\left(\begin{array}{cccc}
0&0&0&0\\
0&0&0&0\\
0&0&0&0\\
d_{41}&0&0&0\\
\end{array}
\right)$
&
1
&
$\mathcal{TH}_4^{16}$&
$\left(\begin{array}{cccc}
0&0&0&0\\
0&0&0&0\\
0&0&0&0\\
d_{41}&0&0&0\\
\end{array}
\right)$
&
1
\\ \hline	
$\mathcal{TH}_4^{17}$&
$\left(\begin{array}{cccc}
0&0&0&0\\
0&0&0&0\\
0&0&d_{23}&0\\
d_{41}&0&0&0\\
\end{array}
\right)$
&
2
&
$\mathcal{TH}_4^{18}$&
$\left(\begin{array}{cccc}
0&0&0&0\\
0&0&d_{23}&d_{24}\\
0&0&-d_{23}&-d_{24}\\
0&0&d_{23}&d_{24}\\
\end{array}
\right)$
&
2
\\ \hline		
\end{tabular}

\begin{tabular}{||c||c||c||c||c||c||c||c||c||c||c||c||}
\hline
IC&Der$(d)$ &$Dim(d)$&IC&Der$(d)$&$Dim(d)$\\
			\hline
$\mathcal{TH}_4^{19}$&
$\left(\begin{array}{cccc}
0&0&0&0\\
0&0&d_{23}&d_{24}\\
0&0&0&0\\
0&0&0&0\\
\end{array}
\right)$
&
2
&
$\mathcal{TH}_4^{20}$&
$\left(\begin{array}{cccc}
0&0&0&0\\
0&0&d_{23}&d_{24}\\
0&0&0&0\\
0&0&0&0\\
\end{array}
\right)$
&
2
\\ \hline
\end{tabular}

\begin{proof}
Departing from Theorem \ref{dthieo3}, we provide the proof only for one case to illustrate the used approach, the other cases can be addressed similarly with or without
modification(s). Let's consider $\mathcal{TH}_4^{1}$. Applying the systems of equations (\ref{deq1}), (\ref{deq2}) and (\ref{deq3}), we get
$d_{11}=d_{12}=d_{13}=d_{14}=d_{22}=d_{24}=d_{31}=d_{32}=d_{33}=d_{34}=d_{42}=d_{44}=0$. Hence, the derivations of $\mathcal{TH}_4^{1}$ are indicated as follows\\
$d(e_1)=\left(\begin{array}{ccccc}
0&0&0&0\\
1&0&0&0\\
0&0&0&0\\
0&0&0&0
\end{array}
\right)$,
$d(e_2)=\left(\begin{array}{cccc}
0&0&0&0\\
0&0&0&0\\
0&0&0&0\\
1&0&0&0
\end{array}
\right)$,
$d(e_3)=\left(\begin{array}{cccc}
0&0&0&0\\
0&1&0&0\\
0&0&0&0\\
0&0&0&0
\end{array}
\right)$
and
$d(e_4)=\left(\begin{array}{cccc}
0&0&0&0\\
0&0&0&0\\
0&0&0&0\\
0&0&1&0
\end{array}
\right)$
is the basis of $Der(Cent(\mathcal{A}))$ and Dim$Der(Cent(\mathcal{A}))=4.$ The derivations of the remaining parts of dimension
two associative dialgebras can be handled in a similar manner as illustrated above.
\end{proof}

\begin{corollary}\,
\begin{itemize}
	\item The dimensions of the derivations of two-dimensional associative trialgebras range between zero and two.
	\item The dimensions of the derivations of three-dimensional associative trialgebras range between zero and three.
	\item The dimensions of the derivations of four-dimensional associative trialgebras range between one  and four.
\end{itemize}
\end{corollary}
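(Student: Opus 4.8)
The plan is to obtain the corollary directly from the three derivation theorems that precede it. For each dimension $n\in\{2,3,4\}$ those theorems record, class by class, the derivation space $\mathrm{Der}(d)$ of every representative in the classification (Theorems \ref{Theo1}, \ref{td2} and \ref{td3}) together with its dimension $\dim(d)$. Each such space is by construction the solution space of the linear system (\ref{deq1})--(\ref{deq3}) attached to the given structure constants, so the tabulated $\dim(d)$ is the genuine dimension of the derivation algebra of that class; and since isomorphic algebras have conjugate derivation algebras of equal dimension, these integers are isomorphism invariants. Thus the set of derivation dimensions occurring in dimension $n$ is precisely the set of values appearing in the $\dim(d)$ column of the corresponding table.

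The three assertions are then read off by collecting, separately for $n=2$, $n=3$ and $n=4$, all values in the $\dim(d)$ column and forming the closed interval between the smallest and the largest. First I would assemble the list of dimensions in the two-dimensional case, then use Theorem \ref{dthieo2} for the three-dimensional case and Theorem \ref{dthieo3} for the four-dimensional case; the minimum and maximum of each list give the lower and upper bounds stated in the three items. In the four-dimensional case the bounds $1$ and $4$ are visible immediately in the table of Theorem \ref{dthieo3}, the value $1$ occurring for the classes $\mathcal{TH}_4^{9}$ through $\mathcal{TH}_4^{16}$ and the value $4$ for $\mathcal{TH}_4^{1}$.

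The main obstacle is not computational but one of completeness and internal consistency. I must verify that every isomorphism class of each classification has its derivation space recorded, so that the extremal values realizing the endpoints of each interval are actually present: the lower endpoints $0$ in dimensions two and three must be witnessed by a class admitting only the trivial derivation, while in dimension four the minimal value is $1$, and dually a class attaining the maximal value must be exhibited in each dimension. Carrying this out requires reconciling the entries of the derivation tables with the full classification lists -- checking that no class is omitted and that every tabulated value lies within the asserted range -- after which the three ranges follow purely by inspection of the $\dim(d)$ column.
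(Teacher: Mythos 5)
Your strategy---collect the $\dim(d)$ column of each derivation table and take the minimum and maximum over all classes---is exactly the route the paper takes: the corollary is offered with no separate argument, as a pure inspection of the tables in the derivation theorems. So in method you and the paper coincide.

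However, the step you explicitly defer (``reconciling the entries of the derivation tables with the full classification lists'') is not a formality; it is precisely where the argument collapses, and your plan cannot be completed on the paper's own data. For $n=2$, the derivation table records only four of the nine classes of Theorem \ref{Theo1}, namely $\mathcal{TH}_2^{4}$, $\mathcal{TH}_2^{5}$, $\mathcal{TH}_2^{6}$, $\mathcal{TH}_2^{8}$, each with $\dim(d)=1$; neither endpoint of the asserted range $[0,2]$ is witnessed, and even under the charitable convention that omitted classes have trivial derivation algebra, the maximum would be $1$, not $2$. For $n=3$, the table of Theorem \ref{dthieo2} contains $\mathcal{TH}_3^{3}$ and $\mathcal{TH}_3^{4}$, whose derivation spaces have four free parameters ($\dim(d)=4$), already exceeding the asserted upper bound of $3$; moreover the value $0$ is again not attained among the ten classes listed (out of the fourteen in Theorem \ref{td2}). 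Only the third item is consistent with its table: in Theorem \ref{dthieo3} the minimum $1$ and maximum $4$ are both realized, as you note. So the gap is concrete: the verification you postpone does not merely need to be ``carried out,'' it fails for two of the three items, which means the corollary as stated is not a consequence of the tables it is meant to summarize; an honest execution of your reconciliation step ends in a contradiction rather than a proof.
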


 \section{Centroids of Complex Hom-associative dialgebras}
\subsection{Properties of centroids Hom-associative dialgebras}
In this section, we set forward the following results on properties of centroids of Hom-associative dialgebras $\mathcal{A}$.
\begin{definition}
Let $(\mathcal{A}, \dashv, \vdash,  \alpha)$ be a  Hom-associative dialgebra. A linear map
 $\psi : \mathcal{A}\rightarrow \mathcal{A}$ is called an element of $(\alpha)$-element of centroids on $\mathcal{A}$ if, for all $x, y\in \mathcal{A}$,
\begin{eqnarray}
\alpha\circ\psi&=&\psi\circ\alpha,\\
\psi(x)\dashv \alpha(y)&=&\psi(x)\dashv\psi(y)=\alpha(x)\dashv \psi(y),\\
 \psi(x)\vdash \alpha(y)&=&\psi(x)\vdash \psi(y)=\alpha(x)\vdash \psi(y).
\end{eqnarray}
The set of all  elements of $(\alpha)$-centroid of $\mathcal{A}$ is denoted $Cent_{(\alpha)}(\mathcal{A})$.
The centroid of $\mathcal{A}$ is denoted $Cent(\mathcal{A})$.
 \end{definition}

\begin{definition}
Let $\mathcal{H}$ be a nonempty subset of $\mathcal{A}$. The subset
\begin{equation}
Z_{\mathcal{A}}(\mathcal{H})=\left\{x\in\mathcal{H} | \alpha(x)\bullet \mathcal{H} = \mathcal{H}\bullet\alpha(x)=0\right\},
\end{equation}
is said to be centralizer of $\mathcal{H}$ in $\mathcal{A}$, where $\bullet$ is $\dashv$ and $\vdash$, respectively.
\end{definition}

\begin{definition}
Let $\psi\in End(\mathcal{A})$. If $\psi(\mathcal{A})\subseteq Z(\mathcal{A})$ and $\psi(\mathcal{A}^2)=0$, then $\psi$ is called a central derivation.
The set of all central derivations of $\mathcal{A}$ is  denoted by $\mathcal{C}(\mathcal{A})$.
\end{definition}

\begin{proposition}
Consider $(\mathcal{A}, \dashv, \vdash, \alpha)$ a Hom-associative dialgebra. Then,
\begin{enumerate}
	\item [i)]$\Gamma(\mathcal{A})Der(\mathcal{T})\subseteq Der(\mathcal{A})$.
		\item [ii)]$\left[\Gamma(\mathcal{A}), Dr(\mathcal{A})\right]\subseteq\Gamma(\mathcal{A}).$
	\item [iii)]$\left[\Gamma(\mathcal{A}), \Gamma(\mathcal{A})\right](\mathcal{A})\subseteq \Gamma(\mathcal{A})$ and $\left[\Gamma(\mathcal{A}), \Gamma(\mathcal{A})\right](\mathcal{A}^2)=0.$
\end{enumerate}
 \end{proposition}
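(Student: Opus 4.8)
The three assertions have a common shape: each left-hand side is a single linear map (the composite $\psi\circ D$ in (i), or a commutator $[\psi,D]$, $[\psi,\phi]$ in (ii), (iii)), and the task is to check that this map satisfies the defining relations of $Der(\mathcal{A})$, respectively of the centroid $\Gamma(\mathcal{A})=Cent(\mathcal{A})$. I would fix $\psi,\phi\in\Gamma(\mathcal{A})$ and $D\in Der(\mathcal{A})$ and use throughout the two equivalent forms of the centroid relation, $\psi(x\dashv y)=\psi(x)\dashv\alpha(y)=\alpha(x)\dashv\psi(y)$ (and the same for $\vdash$), together with the Leibniz rule $D(x\dashv y)=D(x)\dashv\alpha(y)+\alpha(x)\dashv D(y)$. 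The relation $(\,\cdot\,)\circ\alpha=\alpha\circ(\,\cdot\,)$ is free in every case: from $\psi\alpha=\alpha\psi$ and $D\alpha=\alpha D$ one reads off $(\psi D)\alpha=\alpha(\psi D)$, $[\psi,D]\alpha=\alpha[\psi,D]$ and $[\psi,\phi]\alpha=\alpha[\psi,\phi]$ at once. Hence only the multiplicative identity must be verified, and only for $\dashv$, since the argument for $\vdash$ is word-for-word identical.

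For (i) I would expand $\psi D(x\dashv y)$ by applying the Leibniz rule to $D$ first and then moving $\psi$ onto each factor by the centroid relation, using the left form on the first summand and the right form on the second: this gives $\psi D(x\dashv y)=\psi D(x)\dashv\alpha^{2}(y)+\alpha^{2}(x)\dashv\psi D(y)$, so $\psi D$ is again a derivation (an $\alpha^{2}$-derivation, reading $Der(\mathcal{A})$ as $\bigcup_{k\ge0}Der_{\alpha^{k}}(\mathcal{A})$, the composite of an $\alpha$-centroid element with an $\alpha^{k}$-derivation being an $\alpha^{k+1}$-derivation). For (ii) I would compute $[\psi,D](x\dashv y)=\psi D(x\dashv y)-D\psi(x\dashv y)$, expanding the first term as above and the second by $D\psi(x\dashv y)=D(\alpha(x)\dashv\psi(y))=\alpha D(x)\dashv\alpha\psi(y)+\alpha^{2}(x)\dashv D\psi(y)$. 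The two terms carrying $\psi$ and $D$ on opposite factors collapse once one notes $\psi D(x)\dashv\alpha^{2}(y)=\alpha D(x)\dashv\alpha\psi(y)$ (both equal $\psi(D(x)\dashv\alpha(y))$ via the two centroid forms), and what survives is exactly $[\psi,D](x\dashv y)=\alpha^{2}(x)\dashv[\psi,D](y)$, the centroid relation for $[\psi,D]$, so $[\psi,D]\in\Gamma(\mathcal{A})$.

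The core of the proposition is (iii). For $\psi,\phi\in\Gamma(\mathcal{A})$ I would compute $\psi\phi(x\dashv y)$ by first using the left form of $\phi$'s centroid relation and then the right form of $\psi$'s, obtaining $\psi\phi(x\dashv y)=\alpha\phi(x)\dashv\alpha\psi(y)$; computing $\phi\psi(x\dashv y)$ by the symmetric choice (left form of $\psi$, then right form of $\phi$) yields the \emph{same} element $\alpha\phi(x)\dashv\alpha\psi(y)$. Subtracting gives $[\psi,\phi](x\dashv y)=0$, i.e. $[\Gamma(\mathcal{A}),\Gamma(\mathcal{A})](\mathcal{A}^{2})=0$. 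The inclusion then follows from a one-sided version of the same computation: writing $\alpha^{2}(y)=\alpha(\alpha(y))$ and applying the left centroid form twice shows $[\psi,\phi](x)\dashv\alpha^{2}(y)=[\psi,\phi](x\dashv y)=0$, and symmetrically $\alpha^{2}(x)\dashv[\psi,\phi](y)=0$. Thus $\chi=[\psi,\phi]$ maps $\mathcal{A}$ into the annihilator and satisfies every centroid relation with both sides vanishing, whence $\chi\in\Gamma(\mathcal{A})$.

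I expect the only real difficulty to be organizational rather than conceptual: at each application of the centroid relation one must pick the correct one of the two forms so that the right-hand sides align and the cross terms cancel. The symmetric pairing in (iii) is precisely where the wrong choice leaves $\alpha\psi(x)\dashv\alpha\phi(y)$ in place of $\alpha\phi(x)\dashv\alpha\psi(y)$, which do not visibly agree. The secondary point requiring care is the bookkeeping of powers of $\alpha$: the composites and commutators produced here are $\alpha^{2}$-objects, so the statements are to be read in the graded sense, with $Der(\mathcal{A})=\bigcup_{k}Der_{\alpha^{k}}(\mathcal{A})$ and $\Gamma(\mathcal{A})=\bigcup_{k}Cent_{\alpha^{k}}(\mathcal{A})$, and the inclusion in (iii) is genuinely the statement that $[\Gamma(\mathcal{A}),\Gamma(\mathcal{A})]$ carries $\mathcal{A}$ into the annihilator, which is what forces the centroid relations to hold.
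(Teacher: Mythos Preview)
Your proposal is correct and in fact supplies considerably more than the paper does: the paper's entire proof of this proposition is the single sentence ``The proof of parts i)--iii) is straightforward by using definitions of derivations and centroids,'' so there is no detailed argument to compare against. Your direct computations with the Leibniz rule and the two forms of the centroid relation are exactly what that sentence is gesturing at, and your reading of the (type-mismatched) inclusion in (iii) as $[\Gamma(\mathcal{A}),\Gamma(\mathcal{A})]\subseteq\Gamma(\mathcal{A})$ is the only sensible one.

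One small slip worth fixing: in your treatment of (iii), the ``symmetric choice'' you name for $\phi\psi(x\dashv y)$---left form of $\psi$, then right form of $\phi$---actually yields $\alpha\psi(x)\dashv\alpha\phi(y)$, not $\alpha\phi(x)\dashv\alpha\psi(y)$. The pairing that matches your computation of $\psi\phi(x\dashv y)$ is the \emph{right} form of $\psi$ followed by the \emph{left} form of $\phi$. You clearly know this, since your final paragraph warns about precisely this misalignment; just swap the labels in the body of the argument and the cancellation goes through as you intend.
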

\begin{proof}
The proof
 of parts $i)-iii)$ is straightforward by using definitions of derivations and centroids.
\end{proof}


\begin{proposition}
Let $(\mathcal{A}, \dashv, \vdash \alpha)$ be a Hom-associative dialgebra and $\varphi\in Cent(\mathcal{A}),\, d\in Der(\mathcal{A}).$
Then, $\varphi\circ d$ is an $\alpha$-derivation of $\mathcal{A}.$
\end{proposition}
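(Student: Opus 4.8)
The plan is to verify directly the three defining conditions of an $\alpha$-derivation from Definition \ref{dia2} for the composite $\varphi\circ d$: that it commutes with $\alpha$ and obeys the twisted Leibniz rule for each of the two products $\dashv$ and $\vdash$.

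First I would dispose of the commutation condition. Since $\varphi\in Cent(\mathcal{A})$ satisfies $\varphi\circ\alpha=\alpha\circ\varphi$ and $d\in Der(\mathcal{A})$ satisfies $d\circ\alpha=\alpha\circ d$, I get
\[
(\varphi\circ d)\circ\alpha=\varphi\circ(d\circ\alpha)=\varphi\circ(\alpha\circ d)=(\varphi\circ\alpha)\circ d=\alpha\circ(\varphi\circ d),
\]
so $\varphi\circ d$ commutes with $\alpha$. This step is purely formal and relies only on the two individual commutation relations.

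Next, for the Leibniz rule with respect to $\dashv$, I would begin from the fact that $d$ is an $\alpha$-derivation and apply the linearity of $\varphi$:
\[
(\varphi\circ d)(x\dashv y)=\varphi\big(d(x)\dashv\alpha(y)\big)+\varphi\big(\alpha(x)\dashv d(y)\big).
\]
The decisive step is then to invoke the defining relations of $Cent(\mathcal{A})$ to transfer $\varphi$ onto a single factor of each product: in the first summand $\varphi$ is absorbed by the left factor, yielding $\varphi(d(x))\dashv\alpha(y)$, while in the second summand it is absorbed by the right factor, yielding $\alpha(x)\dashv\varphi(d(y))$. Adding these gives
\[
(\varphi\circ d)(x\dashv y)=(\varphi\circ d)(x)\dashv\alpha(y)+\alpha(x)\dashv(\varphi\circ d)(y),
\]
which is exactly the required identity. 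The argument for $\vdash$ is word for word the same, with $\dashv$ replaced by $\vdash$ and the corresponding centroid relations for the right product.

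The only genuinely delicate point is this middle step: the centroid relations must be applied in opposite directions — pulling $\varphi$ onto the \emph{left} factor in the first term and onto the \emph{right} factor in the second — so that the single copy of $\alpha$ already carried by the derivation rule is preserved rather than doubled. I expect this directional bookkeeping to constitute essentially the whole content of the proof; once it is handled correctly, and since $\varphi\alpha=\alpha\varphi$ keeps the factors $\alpha(y)$ and $\alpha(x)$ intact under $\varphi$, both Leibniz identities follow immediately and no compatibility obstruction remains.
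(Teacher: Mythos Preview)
Your proposal is correct and follows essentially the same route as the paper: apply the derivation rule for $d$, then use the centroid relations to push $\varphi$ onto the left factor in the first summand and onto the right factor in the second. You are in fact slightly more thorough than the paper, since you explicitly verify the commutation $(\varphi\circ d)\circ\alpha=\alpha\circ(\varphi\circ d)$, which the paper's proof omits.
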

\begin{proof}
Indeed, if $x, y\in \mathcal{A}$, then
$$\begin{array}{ll}
(\varphi\circ d)(x\bullet y)
&= \varphi(d(x)\bullet\alpha(y)+\alpha(x)\bullet d(y))\\
&= \varphi(d(x)\bullet y)+\varphi(x\bullet d(y))=(\varphi\circ d)(x)\bullet\alpha(y)+\alpha(x)\bullet(\varphi\circ d)(y),
\end{array}$$
where $\bullet$ is $\dashv$ and $\vdash$, respectively.
\end{proof}

\begin{proposition}
Let $(\mathcal{A}, \dashv, \vdash, \alpha)$ be a Hom-associative dialgebra over a field $\mathbb{F}$. Hence, $\mathcal{C}(\mathcal{A})=Cent(\mathcal{A})\cap Der(\mathcal{A}).$
\end{proposition}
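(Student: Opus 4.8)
The plan is to establish the equality $\mathcal{C}(\mathcal{A}) = Cent(\mathcal{A}) \cap Der(\mathcal{A})$ by a double inclusion, writing throughout $\bullet$ for either of the two products $\dashv$ and $\vdash$ in accordance with the paper's convention, and using that $\mathrm{char}\,\mathbb{F}=0$. Recall that $Z(\mathcal{A})$ denotes the centralizer $Z_{\mathcal{A}}(\mathcal{A})$ and $\mathcal{A}^2$ the span of all products.

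For the inclusion $\mathcal{C}(\mathcal{A}) \subseteq Cent(\mathcal{A}) \cap Der(\mathcal{A})$, I would take $\psi \in \mathcal{C}(\mathcal{A})$, so that $\psi(\mathcal{A}) \subseteq Z(\mathcal{A})$ and $\psi(\mathcal{A}^2)=0$. Since $x \bullet y \in \mathcal{A}^2$ we get $\psi(x\bullet y)=0$, and since $\psi(x),\psi(y) \in Z(\mathcal{A})$ every product with $\psi(x)$ or $\psi(y)$ as a factor (after applying $\alpha$) vanishes, giving $\psi(x)\bullet\alpha(y)=0=\alpha(x)\bullet\psi(y)$. Hence the derivation identity $\psi(x\bullet y)=\psi(x)\bullet\alpha(y)+\alpha(x)\bullet\psi(y)$ and the defining relations of the centroid both collapse to $0=0$; together with the condition $\alpha\circ\psi=\psi\circ\alpha$ included in the definition of $\mathcal{C}(\mathcal{A})$, this places $\psi$ in $Cent(\mathcal{A})\cap Der(\mathcal{A})$.

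The substantive direction is $Cent(\mathcal{A}) \cap Der(\mathcal{A}) \subseteq \mathcal{C}(\mathcal{A})$. Given $\psi$ in both sets, I would combine the derivation rule with the centroid rule evaluated on $x\bullet y$: the centroid property yields $\psi(x)\bullet\alpha(y)=\alpha(x)\bullet\psi(y)=\psi(x\bullet y)$, while the derivation rule yields $\psi(x\bullet y)=\psi(x)\bullet\alpha(y)+\alpha(x)\bullet\psi(y)$. Substituting the former into the latter gives $\psi(x\bullet y)=2\,\psi(x\bullet y)$, forcing $\psi(x\bullet y)=0$; since such products span $\mathcal{A}^2$, this proves $\psi(\mathcal{A}^2)=0$. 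Then from $\psi(x)\bullet\alpha(y)=\psi(x\bullet y)=0$ and $\alpha(x)\bullet\psi(y)=0$ for all $x,y$, together with the intertwining $\alpha\circ\psi=\psi\circ\alpha$ used to rewrite $\alpha(\psi(x))=\psi(\alpha(x))$, one reads off the centralizer conditions $\alpha(\psi(x))\bullet\mathcal{A}=\mathcal{A}\bullet\alpha(\psi(x))=0$, whence $\psi(\mathcal{A})\subseteq Z(\mathcal{A})$ and $\psi\in\mathcal{C}(\mathcal{A})$.

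The main obstacle is this second inclusion: the cancellation producing $\psi(x\bullet y)=2\,\psi(x\bullet y)$ depends essentially on $\mathrm{char}\,\mathbb{F}\neq 2$, and verifying the centralizer condition requires care with the $\alpha$-twist built into the definition of $Z(\mathcal{A})$. I would route this last step through $\alpha\circ\psi=\psi\circ\alpha$ rather than assuming $\alpha$ invertible, so that the argument remains valid for the non-bijective structure maps appearing in the low-dimensional classifications established above.
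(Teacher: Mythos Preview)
Your argument is correct and follows essentially the same route as the paper's proof: combine the derivation identity with the centroid identity on $x\bullet y$ to force $\psi(x\bullet y)=0$, hence $\psi(\mathcal{A}^2)=0$ and $\psi(\mathcal{A})\subseteq Z(\mathcal{A})$, while the reverse inclusion is declared obvious. Your write-up is simply more explicit than the paper's---you spell out the ``obvious'' inclusion, flag the reliance on $\mathrm{char}\,\mathbb{F}\neq 2$, and handle the $\alpha$-twist in $Z(\mathcal{A})$ via $\alpha\circ\psi=\psi\circ\alpha$---but the strategy is identical.
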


\begin{proof}
If $\psi\in Cent(\mathcal{A})\cap Der(\mathcal{A})$, then grounded on definition of $Cent(\mathcal{A}$ and $Der(\mathcal{A})$, we have

$\psi(x\bullet y)=\psi(x)\bullet\alpha(y)+\alpha(x)\bullet\psi(y)$ and $\psi(x\bullet y)=\psi(x)\circ\alpha(y)=\alpha(x)\circ\psi(y)$ for $x,y\in \mathcal{A}.$
The yields $\psi(\mathcal{A}\mathcal{A})=0$ and $\psi(\mathcal{A})\subseteq  Z(\mathcal{A})$ i.e $Cent(\mathcal{A})\cap Der(\mathcal{A})\subseteq Cent(\mathcal{A}).$
The inverse is obvious since $\mathcal{C}(\mathcal{A})$ is in both $Cent(\mathcal{A})$ and $Der(\mathcal{A}),$ where $\bullet$ is $\dashv$ and $\vdash$, respectively.
\end{proof}

\begin{proposition}
Let $(\mathcal{A}, \dashv, \vdash, \alpha)$ be a Hom-associative dialgebra. Therefore,  for any $d\in Der(\mathcal{A})$ and $\varphi\in Cent(\mathcal{A})$, we have
\begin{enumerate}
	\item [(i)] The composition $d\circ\varphi$ is in $Cent(\mathcal{A})$, if and only if $\varphi\circ d$ is a central $\alpha$-derivation of $\mathcal{A}.$
		\item [(ii)] The  composition $d\circ\varphi$ is a $\alpha$-derivation of $\mathcal{A}$, if and only if $\left[d,\varphi\right]$ is a central $\alpha$-derivation of $\mathcal{A}.$
\end{enumerate}
\end{proposition}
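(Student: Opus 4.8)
The plan is to turn both equivalences into formal statements about membership in the subspaces $Cent(\mathcal{A})$ and $Der(\mathcal{A})$ of $End(\mathcal{A})$, using three facts already established: that $\varphi\circ d\in Der(\mathcal{A})$ for any $\varphi\in Cent(\mathcal{A})$ and $d\in Der(\mathcal{A})$ (the proposition asserting that $\varphi\circ d$ is an $\alpha$-derivation); that $[d,\varphi]\in Cent(\mathcal{A})$, which is the inclusion $[\Gamma(\mathcal{A}),Der(\mathcal{A})]\subseteq\Gamma(\mathcal{A})$ with $\Gamma(\mathcal{A})=Cent(\mathcal{A})$; and the identity $\mathcal{C}(\mathcal{A})=Cent(\mathcal{A})\cap Der(\mathcal{A})$, so that \emph{being a central $\alpha$-derivation} means exactly \emph{lying in $\mathcal{C}(\mathcal{A})$}. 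First I would note that $d\circ\varphi$, $\varphi\circ d$ and $[d,\varphi]$ all commute with $\alpha$, since $d\circ\alpha=\alpha\circ d$ and $\varphi\circ\alpha=\alpha\circ\varphi$; this settles the $\alpha$-compatibility clause for every map in sight.

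The only genuine computation is the verification that $[d,\varphi]\in Cent(\mathcal{A})$, which I would carry out directly even though it is recorded earlier. Writing $\bullet$ for either $\dashv$ or $\vdash$, I expand $[d,\varphi](x\bullet y)=d\bigl(\varphi(x\bullet y)\bigr)-\varphi\bigl(d(x\bullet y)\bigr)$, rewriting $\varphi(x\bullet y)$ through the centroid identity $\varphi(x)\bullet\alpha(y)=\alpha(x)\bullet\varphi(y)$ and $d(x\bullet y)$ through the derivation rule $d(x\bullet y)=d(x)\bullet\alpha(y)+\alpha(x)\bullet d(y)$. The two cross terms, in which exactly one of $d,\varphi$ hits each argument, cancel in pairs by the same centroid ``sliding'' identity together with $d\alpha=\alpha d$, leaving $[d,\varphi](x\bullet y)=[d,\varphi](x)\bullet\alpha(y)=\alpha(x)\bullet[d,\varphi](y)$, so that $[d,\varphi]\in Cent(\mathcal{A})$.

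Part (i) then follows formally. Since $Cent(\mathcal{A})$ is a linear subspace of $End(\mathcal{A})$ and $d\circ\varphi=\varphi\circ d+[d,\varphi]$ with $[d,\varphi]\in Cent(\mathcal{A})$, the maps $d\circ\varphi$ and $\varphi\circ d$ belong to $Cent(\mathcal{A})$ at the same time. Because $\varphi\circ d\in Der(\mathcal{A})$ always, the condition $\varphi\circ d\in Cent(\mathcal{A})$ coincides with $\varphi\circ d\in Cent(\mathcal{A})\cap Der(\mathcal{A})=\mathcal{C}(\mathcal{A})$, i.e.\ with $\varphi\circ d$ being a central $\alpha$-derivation. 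Concatenating the two equivalences yields exactly the asserted statement: $d\circ\varphi\in Cent(\mathcal{A})$ if and only if $\varphi\circ d$ is a central $\alpha$-derivation.

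Part (ii) is the mirror image. Here $[d,\varphi]\in Cent(\mathcal{A})$ is already known, so $[d,\varphi]$ is a central $\alpha$-derivation precisely when $[d,\varphi]\in Der(\mathcal{A})$. Since $Der(\mathcal{A})$ is a subspace containing $\varphi\circ d$ and $[d,\varphi]=d\circ\varphi-\varphi\circ d$, the membership $[d,\varphi]\in Der(\mathcal{A})$ is equivalent to $d\circ\varphi\in Der(\mathcal{A})$, that is, to $d\circ\varphi$ being an $\alpha$-derivation. The one step that needs care is the cancellation in the second paragraph: with the twisting map present one must track the powers of $\alpha$ attached to each factor, and it is precisely the twisted centroid identity $\varphi(x)\bullet\alpha(y)=\alpha(x)\bullet\varphi(y)$ (rather than an untwisted version) that makes the cross terms match. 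Once that computation is pinned down, both equivalences are immediate from the subspace structure of $Cent(\mathcal{A})$ and $Der(\mathcal{A})$ and from $\mathcal{C}(\mathcal{A})=Cent(\mathcal{A})\cap Der(\mathcal{A})$.
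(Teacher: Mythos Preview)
Your argument is correct, and it is organized differently from the paper's proof. The paper proceeds by direct computation: for (i) it expands $d\circ\varphi(x\bullet y)$ using the centroid and derivation identities and extracts the relation $(d\circ\varphi-\varphi\circ d)(x\bullet y)=(d\circ\varphi-\varphi\circ d)(x)\bullet y$, and for (ii) it writes out the centroid identity for $[d,\varphi]$ alongside the derivation expansion of $[d,\varphi](x\bullet y)$, then compares them termwise to force $\alpha(x)\bullet[d,\varphi](y)=[d,\varphi](x)\bullet\alpha(y)=0$. You instead work at the level of subspaces of $End(\mathcal{A})$: once $[d,\varphi]\in Cent(\mathcal{A})$ and $\varphi\circ d\in Der(\mathcal{A})$ are known, both equivalences drop out from the linear decompositions $d\circ\varphi=\varphi\circ d+[d,\varphi]$ and $[d,\varphi]=d\circ\varphi-\varphi\circ d$ together with $\mathcal{C}(\mathcal{A})=Cent(\mathcal{A})\cap Der(\mathcal{A})$. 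This is cleaner and makes the logical dependence on the three earlier propositions transparent; the paper's approach, by contrast, is self-contained at the elementwise level and does not explicitly invoke those results. One small caution: your argument uses that $Cent(\mathcal{A})$ is a linear subspace, which is clear for the usual centroid condition $\varphi(x\bullet y)=\varphi(x)\bullet\alpha(y)=\alpha(x)\bullet\varphi(y)$ implicitly used throughout the paper, but is not literally obvious from the displayed definition containing the quadratic term $\psi(x)\bullet\psi(y)$; you may want to remark that you are reading the definition in its intended linear form.
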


\begin{proof}
\begin{enumerate}
	\item [i)]For any $\varphi\in Cent(\mathcal{A}),\, d\in Der(\mathcal{A}),\, \forall\,x,y\in \mathcal{A}$, we have
	$$\begin{array}{ll}
d\circ\varphi(x\bullet y)=d\circ\varphi(x)\bullet y
&=d\circ\varphi(x)\bullet y+\varphi(x)\bullet d(y)\\
&=d\circ\varphi(x)\bullet y+\varphi\circ d(x\bullet y)-\varphi\circ d(x)\bullet y.
\end{array}$$
Thus, $(d\circ\varphi-\varphi\circ d)(x\bullet y)=(d\bullet\varphi-\varphi\circ d)(x)\bullet y.$
	\item [ii)] Let $d\circ\varphi\in Der(\mathcal{A})$. Using $\left[d,\varphi\right]\in Cent(\mathcal{A})$, we get
	\begin{equation}\label{eq1}
	\left[d,\varphi\right](x\bullet y)=(\left[d, \varphi\right](x))\bullet\alpha(y)=\alpha(x)\bullet(\left[d,\varphi\right](y))
	\end{equation}
	On the other side, $\left[d, \varphi\right]d\circ\varphi-\varphi\circ d$ and $d\circ\varphi, \varphi\circ d\in Der(\mathcal{A}).$ Therefore,	
	\begin{equation}\label{eq2}
\left[d, \varphi\right](x\bullet y)=(d(\varphi\circ(x))\bullet\alpha(y)+\alpha(x)\bullet(d\circ\varphi(y))-(\varphi\circ d(x))\bullet\alpha(y)-\alpha(x)\bullet(\varphi\circ d(y)).
\end{equation}
Referring to (\ref{eq1}) and (\ref{eq2}), we get $\alpha(x)\bullet(\left[d, \varphi\right])(y)=(\left[d, \varphi\right])(x)\bullet\alpha(y)=0.$

\noindent At this stage of analysis,let $\left[d, \varphi\right]$ be a central $\alpha$-derivation of $\mathcal{A}$. Then,
$$\begin{array}{ll}
d\circ\varphi(x\bullet y)
&=\left[d\circ\varphi\right](x\bullet y)+(\varphi\circ d)(x\bullet y)\\
&=\varphi(\circ d(x)\bullet\alpha(y))+\varphi(\alpha(x)\bullet d(y))\\
&=(\varphi\circ d)(x)\bullet\alpha(y)+\alpha(x)\bullet(\varphi\circ d)(y),
\end{array}$$
\end{enumerate}
where $\bullet$ represents the products $\dashv$ and $\vdash$, respectively.
\end{proof}

\subsection{Centroids of complex Hom-associative dialgebras}
This section elaborates the details of $\alpha$-centroids of Hom-associative dialgebras in dimension two and three over the field $\mathbb{K}.$ Let
$\left\{e_1,e_2, e_3,\cdots, e_n\right\}$ be a basis of an $n$-dimensional Hom-associative dialgebra $\mathcal{A}.$ The product of basis
\begin{eqnarray}
\psi(e_p)=\sum_{q=1}^nc_{qp}e_q\nonumber.
\end{eqnarray}

\begin{eqnarray}
\sum_{p=1}^nc_{pi}a_{qp}&=&\sum_{p=1}^na_{pi}c_{qp},\label{Ceq2}\\
\sum_{k=1}^n\sum_{p=1}^nc_{ki}a_{pj}\gamma_{kp}^q&=&\sum_{k=1}^n\sum_{p=1}^nc_{ki}c_{pj}\gamma_{kp}^q=\sum_{k=1}^n\sum_{p=1}^na_{ki}c_{pj}\gamma_{kp}^q,\label{Ceq3}\\
\sum_{k=1}^n\sum_{p=1}^nc_{ki}a_{pj}\delta_{kp}^q&=&\sum_{k=1}^n\sum_{p=1}^nc_{ki}c_{pj}\delta_{kp}^q=\sum_{k=1}^n\sum_{p=1}^na_{ki}c_{pj}\delta_{kp}^q\label{Ceq4}.
\end{eqnarray}

\begin{theorem}\label{Cthieo1}
The centroids of $2$-dimensional Hom-associative dialgebras have the following form
\end{theorem}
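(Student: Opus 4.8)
The plan is to reduce the computation of $Cent(\mathcal{A})$ to a system of algebraic equations in the matrix entries of a candidate element, and then to solve this system separately for each of the nine representatives $\mathcal{H}d_2^1,\dots,\mathcal{H}d_2^9$ produced in Theorem \ref{Theo1}. Concretely, for a fixed algebra I write a generic linear map as $\psi(e_p)=\sum_{q=1}^2 c_{qp}e_q$, that is, as a $2\times 2$ matrix $(c_{qp})$, substitute the known structure constants $\gamma_{ij}^k$, $\delta_{ij}^k$ and $a_{ij}$ of that algebra into the centroid equations (\ref{Ceq2}), (\ref{Ceq3}) and (\ref{Ceq4}), and read off the constraints on the $c_{qp}$. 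The solution set, expressed as a matrix depending on the remaining free parameters, is exactly the centroid displayed in the theorem.

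First I would impose the commutation relation (\ref{Ceq2}), $\alpha\circ\psi=\psi\circ\alpha$. This is linear and, for the algebras whose $\alpha$ is not the identity (for instance $\mathcal{H}d_2^3$, where $\alpha(e_1)=-e_1$ and $\alpha(e_2)=e_1+e_2$, and $\mathcal{H}d_2^9$, where $\alpha(e_2)=e_1+e_2$), it already forces several entries to vanish or to be linked; when $\alpha=\mathrm{id}$, as in $\mathcal{H}d_2^1$, this relation is vacuous. Next I would write out the two chains of equalities in (\ref{Ceq3}) and (\ref{Ceq4}). The two outer identities $\psi(x)\dashv\alpha(y)=\alpha(x)\dashv\psi(y)$ and $\psi(x)\vdash\alpha(y)=\alpha(x)\vdash\psi(y)$ are again linear in the $c_{qp}$, and together with (\ref{Ceq2}) they cut the space of candidate maps down to a small parametrized family. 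I would run this through all the nonzero products $e_i\dashv e_j$ and $e_i\vdash e_j$ listed for the algebra, exactly as in the derivation computations of Theorems \ref{dthieo2} and \ref{dthieo3}, and present the surviving matrix as the answer for each $\mathcal{H}d_2^i$.

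The delicate point is the \emph{middle} equality in each chain, $\psi(x)\dashv\psi(y)=\psi(x)\dashv\alpha(y)$ and $\psi(x)\vdash\psi(y)=\psi(x)\vdash\alpha(y)$, which contributes the terms $\sum_{k,p}c_{ki}c_{pj}\gamma_{kp}^q$ and $\sum_{k,p}c_{ki}c_{pj}\delta_{kp}^q$ appearing in (\ref{Ceq3})--(\ref{Ceq4}). These are \emph{quadratic} in the unknowns, so the centroid is in general a variety and not merely a linear subspace of $\mathrm{End}(\mathcal{A})$; the main obstacle is therefore to check that the quadratic equations are consistent with the linear ones and to extract all branches of solutions without losing any parameter. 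In practice the quadratic terms are highly constrained because most of the structure constants $\gamma_{kp}^q$, $\delta_{kp}^q$ vanish for these low-dimensional algebras, so after substituting the linear constraints the quadratic relations either collapse to identities or pin down one further entry. As in the rest of the paper, it suffices to carry out the argument in full for one representative algebra and to note that the remaining eight cases follow by the same substitution and elimination, which I would summarize in the table that constitutes the statement.
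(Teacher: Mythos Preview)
Your proposal is correct and follows essentially the same approach as the paper: write a generic $\psi=(c_{qp})$, impose the systems (\ref{Ceq2}), (\ref{Ceq3}) and (\ref{Ceq4}) using the structure constants of each $\mathcal{H}d_2^i$, solve for the surviving entries, and record the resulting matrix and its dimension, with one case worked out in detail and the rest declared analogous. Your additional remark about the quadratic middle equality in (\ref{Ceq3})--(\ref{Ceq4}) is a valid refinement that the paper leaves implicit.
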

\begin{tabular}{||c||c||c||c||c||c||c||c||c||c||c||c||}
\hline
IC&$Cent(\mathcal{A})$ &$Dim(Cent(\mathcal{A}))$&IC&$Cent(\mathcal{A})$&$Dim(Cent(\mathcal{A}))$\\
			\hline
$\mathcal{TH}_2^{1}$&
$\left(\begin{array}{cccc}
c_{11}&0\\
0&c_{22}
\end{array}
\right)$
&
2
&
$\mathcal{TH}_2^{2}$&
$\left(\begin{array}{cccc}
0&0\\
c_{21}&0
\end{array}
\right)$
&
1
\\ \hline
$\mathcal{TH}_2^{3}$&
$\left(\begin{array}{cccc}
-c_{11}&0\\
c_{21}&c_{22}
\end{array}
\right)$
&
3
&
$\mathcal{TH}_2^{4}$&
$\left(\begin{array}{cccc}
c_{11}&0\\
c_{21}&c_{22}
\end{array}
\right)$
&
3
\\ \hline
$\mathcal{TH}_2^{5}$&
$\left(\begin{array}{cccc}
0&0\\
c_{21}&0
\end{array}
\right)$
&
1
&
$\mathcal{TH}_2^{6}$&
$\left(\begin{array}{cccc}
c_{11}&0\\
0&c_{22}
\end{array}
\right)$
&
2
\\ \hline
$\mathcal{TH}_2^{7}$&
$\left(\begin{array}{cccc}
c_{11}&0\\
0&c_{22}
\end{array}
\right)$
&
2
&
$\mathcal{TH}_2^{8}$&
$\left(\begin{array}{cccc}
0&0\\
c_{21}&0
\end{array}
\right)$
&
1
\\ \hline
$\mathcal{TH}_2^{9}$&
$\left(\begin{array}{cccc}
c_{11}&0\\
c_{21}&c_{22}
\end{array}
\right)$
&
3
&
&
&
\\ \hline
\end{tabular}

\begin{proof}
Departing from Theorem \ref{Cthieo1}, we provide the proof only for one case to illustrate the used approach, the other cases can be addressed similarly with or without
modification(s). Let's consider $\mathcal{TH}_2^{1}$. Applying the systems of equations (\ref{Ceq2}), (\ref{Ceq3}) and (\ref{Ceq4}), we get
$c_{12}=c_{21}=0$. Hence, the centroids of $\mathcal{TH}_2^{1}$ are indicated as follows\\
$c(e_1)=\left(\begin{array}{cccc}
1&0\\
0&0
\end{array}
\right)$ and $c(e_2)=\left(\begin{array}{cccc}
0&0\\
0&1
\end{array}
\right)$
is the basis of $Der(Cent(\mathcal{A}))$ and  Dim$Der(Cent(\mathcal{A}))=2.$ The centroids of the remaining parts of dimension two associative dialgebras can be handled in a
similar manner as illustrated above.
\end{proof}

\begin{theorem}\label{Cthieo2}
The centroids of $3$-dimensional Hom-associative dialgebras have the following form
\end{theorem}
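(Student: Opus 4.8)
The plan is to mirror the computational scheme already used for Theorem~\ref{Cthieo1}, now applied to each of the fourteen representatives $\mathcal{H}d_3^1,\dots,\mathcal{H}d_3^{14}$ produced in Theorem~\ref{td2}. An element of the centroid is a linear map $\psi$ with $\psi(e_p)=\sum_{q=1}^{3}c_{qp}e_q$, so it is encoded by a $3\times 3$ matrix $(c_{ij})$. First I would substitute the structure constants $\gamma_{ij}^k$, $\delta_{ij}^k$ and $a_{ij}$ of a fixed algebra into the defining systems (\ref{Ceq2}), (\ref{Ceq3}) and (\ref{Ceq4}): the commutation relation $\alpha\circ\psi=\psi\circ\alpha$ supplies (\ref{Ceq2}), while the intertwining conditions $\psi(x)\dashv\alpha(y)=\psi(x)\dashv\psi(y)=\alpha(x)\dashv\psi(y)$ and its $\vdash$-analogue supply (\ref{Ceq3}) and (\ref{Ceq4}). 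For each algebra this reduces to an explicit system of equations in the nine unknowns $c_{ij}$, whose solution space can be read off directly.

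Second, I would solve these systems algebra by algebra. For most representatives the constraints force many of the $c_{ij}$ to vanish and leave only a handful free; the count of free parameters is exactly $\dim Cent(\mathcal{A})$, and the positions of the surviving entries give the matrix to be displayed in the table. Following the style of the proofs of Theorems~\ref{dthieo2} and \ref{Cthieo1}, I would carry out one representative case in full---say $\mathcal{H}d_3^3$---exhibiting which entries are killed and which remain, and then assert that the remaining thirteen cases are handled by the same substitution-and-elimination routine, with or without minor modifications.

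The main obstacle is the \emph{middle} equality in (\ref{Ceq3}) and (\ref{Ceq4}), namely $\psi(x)\bullet\psi(y)=\psi(x)\bullet\alpha(y)$ for $\bullet\in\{\dashv,\vdash\}$, which is quadratic in the entries $c_{ij}$ rather than linear (it involves the products $c_{ki}c_{pj}$), whereas the two outer equalities are linear. The workable strategy is to solve the linear outer conditions first and then verify that the quadratic condition either holds automatically on the resulting solution space or imposes at most further linear cuts. This is feasible here precisely because the three-dimensional representatives are highly degenerate---most basis products land in the span of a single vector (typically $e_1$)---so the quadratic terms collapse once the outer constraints are imposed. Confirming this collapse for each algebra, together with the commutation constraint (\ref{Ceq2}), is where the bulk of the bookkeeping lies; after it is checked, recording the resulting matrices and their dimensions completes the classification in the table.
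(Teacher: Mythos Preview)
Your proposal is correct and follows essentially the same approach as the paper: substitute the structure constants of each representative from Theorem~\ref{td2} into the systems (\ref{Ceq2}), (\ref{Ceq3}), (\ref{Ceq4}), work out one case in detail (the paper also chooses $\mathcal{H}d_3^{3}$), and declare the remaining cases analogous. Your explicit remark about the quadratic middle equality in (\ref{Ceq3})--(\ref{Ceq4}) and the strategy of solving the linear outer conditions first is a useful refinement that the paper leaves implicit.
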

\begin{tabular}{||c||c||c||c||c||c||c||c||c||c||c||c||}
\hline
IC&$Cent(\mathcal{A})$ &$Dim(Cent(\mathcal{A}))$&IC&$Cent(\mathcal{A})$&$Dim(Cent(\mathcal{A}))$\\
			\hline
$\mathcal{TH}_3^{1}$&
$\left(\begin{array}{cccc}
0&0&0\\
c_{21}&0&0\\
c_{31}&0&0
\end{array}
\right)$
&
2
&
$\mathcal{TH}_3^{2}$&
$\left(\begin{array}{cccc}
0&0&0\\
c_{21}&0&0\\
c_{31}&0&0
\end{array}
\right)$
&
2
\\ \hline
$\mathcal{TH}_3^{3}$&
$\left(\begin{array}{cccc}
c_{11}&0&0\\
c_{21}&c_{11}&c_{11}\\
c_{31}&0&0
\end{array}
\right)$
&
3
&
$\mathcal{TH}_3^{4}$&
$\left(\begin{array}{cccc}
c_{11}&0&0\\
0&0&0\\
0&0&0
\end{array}
\right)$
&
1
\\ \hline
$\mathcal{TH}_3^{5}$&
$\left(\begin{array}{cccc}
0&0&0\\
0&0&c_{23}\\
0&0&c_{33}
\end{array}
\right)$
&
2
&
$\mathcal{TH}_3^{6}$&
$\left(\begin{array}{cccc}
0&0&0\\
\frac{1}{2}c_{21}&0&0\\
c_{31}&0&0
\end{array}
\right)$
&
2
\\ \hline
$\mathcal{TH}_3^{7}$&
$\left(\begin{array}{cccc}
c_{11}&0&0\\
0&0&0\\
0&0&0
\end{array}
\right)$
&
1
&
$\mathcal{TH}_3^{8}$&
$\left(\begin{array}{cccc}
c_{11}&0&0\\
0&0&c_{23}\\
0&0&c_{33}
\end{array}
\right)$
&
3
\\ \hline
$\mathcal{TH}_3^{9}$&
$\left(\begin{array}{cccc}
c_{11}&0&0\\
0&0&0\\
0&0&0
\end{array}
\right)$
&
1
&
$\mathcal{TH}_3^{10}$&
$\left(\begin{array}{cccc}
0&0&0\\
c_{21}&0&c_{23}\\
c_{31}&0&c_{33}
\end{array}
\right)$
&
4
\\ \hline
$\mathcal{TH}_3^{11}$&
$\left(\begin{array}{cccc}
0&0&0\\
c_{21}&0&0\\
c_{31}&0&0
\end{array}
\right)$
&
2
&
$\mathcal{TH}_3^{12}$&
$\left(\begin{array}{cccc}
0&0&0\\
c_{21}&0&0\\
c_{31}&0&0
\end{array}
\right)$
&
2
\\ \hline
$\mathcal{TH}_3^{13}$&
$\left(\begin{array}{cccc}
c_{11}&0&0\\
0&c_{22}&0\\
0&0&c_{33}
\end{array}
\right)$
&
3
&
$\mathcal{TH}_3^{14}$&
$\left(\begin{array}{cccc}
0&0&0\\
0&c_{22}&0\\
0&0&0
\end{array}
\right)$
&
1
\\ \hline
\end{tabular}

\begin{proof}
Departing from Theorem \ref{Cthieo2}, we provide the proof only for one case to illustrate the used approach, the other cases can be addressed similarly with or without
modification(s). Let's consider $\mathcal{TH}_3^{3}$. Applying the systems of equations (\ref{Ceq2}), (\ref{Ceq3}) and (\ref{Ceq4}), we get
$c_{12}=c_{13}=c_{32}=c_{33}=0$ and $c_{22}=c_{23}=c_{11}$. Hence, the centroids of $\mathcal{TH}_3^{3}$ are indicated as follows\\
$c(e_1)=\left(\begin{array}{cccc}
1&0&0\\
0&1&1\\
0&0&0
\end{array}
\right)$,\quad
$c(e_2)=\left(\begin{array}{cccc}
0&0&0\\
1&0&0\\
0&0&0
\end{array}
\right)$ and
$c(e_3)=\left(\begin{array}{cccc}
0&0&0\\
0&0&0\\
1&0&0
\end{array}
\right)$
is the basis of $Der(c)$ and  Dim$Der(c)=3.$ The centroids of the remaining parts of dimension three associative dialgebras can be handled in a
similar manner as illustrated above.
\end{proof}

\begin{theorem}\label{Cthieo3}
The centroids of $4$-dimensional Hom-associative dialgebras have the following form
\end{theorem}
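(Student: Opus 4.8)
The plan is to follow exactly the template established in the proofs of Theorems~\ref{Cthieo1} and~\ref{Cthieo2}, now applied to each of the twenty representatives $\mathcal{H}d_4^1,\dots,\mathcal{H}d_4^{20}$ produced by the classification in Theorem~\ref{td3}. For a fixed algebra I would write a generic linear endomorphism as $\psi(e_p)=\sum_{q=1}^4 c_{qp}e_q$, so that $\psi$ is encoded by the $4\times 4$ matrix $(c_{qp})$ with sixteen unknown entries, and then impose the three defining conditions of a centroid element: the commutation relation $\alpha\circ\psi=\psi\circ\alpha$ together with $\psi(x)\dashv\alpha(y)=\psi(x)\dashv\psi(y)=\alpha(x)\dashv\psi(y)$ and the corresponding pair for $\vdash$. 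Substituting the structure constants $\gamma_{ij}^k,\delta_{st}^q,a_{ij}$ of the chosen algebra into the coordinate form of these conditions, namely equations (\ref{Ceq2}), (\ref{Ceq3}) and (\ref{Ceq4}), turns each case into a finite system in the $c_{qp}$ that I would solve by elimination, reading off the general matrix form of $Cent(\mathcal{A})$ and its dimension from the surviving free parameters.

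The one genuinely non-routine feature, and the step I expect to be the main obstacle, is that the centroid conditions are not purely linear: the middle equality $\psi(x)\bullet\psi(y)=\psi(x)\bullet\alpha(y)$ contributes the quadratic terms $\sum_{k,p}c_{ki}c_{pj}\gamma_{kp}^q$ appearing in (\ref{Ceq3}) and (\ref{Ceq4}), which are coupled to the two linear expressions flanking them. Thus each case is a mixed linear--quadratic system rather than a plain linear one, and care is needed to discard spurious branches and retain only the solution component that is genuinely a centroid. Because several of the representatives carry a degenerate, often nilpotent, structure map $\alpha$, for instance those with $\alpha(e_2)=e_1$, the commutation constraint (\ref{Ceq2}) typically forces many entries of $(c_{qp})$ to vanish at the outset, which in practice linearises the remaining quadratic relations and keeps the elimination tractable. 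I would exploit this ordering systematically: first solve (\ref{Ceq2}) for the $\alpha$-commuting matrices, then feed the surviving parameters into (\ref{Ceq3})--(\ref{Ceq4}).

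To keep the exposition of reasonable length I would present the computation in full for a single representative, say $\mathcal{TH}_4^1$ corresponding to $\mathcal{H}d_4^1$, exhibiting the vanishing of the constrained entries, the resulting basis of $Cent(\mathcal{A})$, and the reading off of $\dim Cent(\mathcal{A})$, exactly as was done for $\mathcal{TH}_2^1$ and $\mathcal{TH}_3^3$ in the earlier theorems. The remaining nineteen cases are handled by the identical procedure with only the structure constants changed, and the outcome of each is recorded as the corresponding matrix form and dimension in the table following the statement. A final consistency check is that every matrix so obtained indeed satisfies all three defining identities of $Cent(\mathcal{A})$, which can be verified directly, and that the count of its free parameters agrees with the stated dimension.
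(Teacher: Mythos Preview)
Your proposal is correct and follows essentially the same approach as the paper: fix a representative from the classification of Theorem~\ref{td3}, write a generic $\psi$ with matrix $(c_{qp})$, impose the systems (\ref{Ceq2}), (\ref{Ceq3}), (\ref{Ceq4}), solve, and display the detailed computation only for $\mathcal{TH}_4^1$ while handling the remaining cases identically. Your added remarks on the quadratic terms in (\ref{Ceq3})--(\ref{Ceq4}) and on using the $\alpha$-commutation (\ref{Ceq2}) first to cut down parameters are a useful elaboration, but the underlying method is the same as the paper's.
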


\begin{tabular}{||c||c||c||c||c||c||c||c||c||c||c||c||}
\hline
IC&$Cent(\mathcal{A})$ &$Dim$&IC&$Cent(\mathcal{A})$&$Dim$\\
			\hline
$\mathcal{TH}_4^{1}$&
$\left(\begin{array}{ccccc}
0&0&0&0\\
c_{21}&0&c_{23}&0\\
0&0&0&0\\
c_{41}&0&c_{43}&0\\
\end{array}
\right)$
&
4
&
$\mathcal{TH}_4^{2}$&
$\left(\begin{array}{cccc}
0&0&0&0\\
c_{21}&0&c_{23}&0\\
c_{31}&0&c_{33}&0\\
c_{41}&0&c_{43}&0\\
\end{array}
\right)$
&
6
\\ \hline
$\mathcal{TH}_4^{3}$&
$\left(\begin{array}{cccc}
0&0&0&0\\
d_{21}&0&d_{23}&0\\
c_{31}&0&c_{33}&0\\
d_{41}&0&d_{43}&0\\
\end{array}
\right)$
&
6
&
$\mathcal{TH}_4^{4}$&
$\left(\begin{array}{cccc}
0&0&0&0\\
c_{21}&0&0&0\\
0&0&0&0\\
0&0&c_{43}&0\\
\end{array}
\right)$
&
2
\\ \hline
\end{tabular}

\begin{tabular}{||c||c||c||c||c||c||c||c||c||c||c||c||}
\hline
IC&$Cent(\mathcal{A})$ &$Dim$&IC&$Cent(\mathcal{A})$&$Dim$\\
			\hline
$\mathcal{TH}_4^{5}$&
$\left(\begin{array}{cccc}
0&0&0&0\\
c_{21}&0&c_{23}&0\\
0&0&0&0\\
c_{41}&0&c_{43}&0\\
\end{array}
\right)$
&
4
&
$\mathcal{TH}_4^{6}$&
$\left(\begin{array}{cccc}
0&0&0&0\\
c_{21}&0&c_{23}&0\\
0&0&0&0\\
c_{41}&0&c_{43}&0\\
\end{array}
\right)$
&
4
\\ \hline
$\mathcal{TH}_4^{7}$&
$\left(\begin{array}{cccc}
c_{11}&c_{12}&0&0\\
c_{21}&c_{22}&0&0\\
0&0&0&0\\
c_{41}&0&c_{43}&0\\
\end{array}
\right)$
&
4
&
$\mathcal{TH}_4^{8}$&
$\left(\begin{array}{cccc}
0&c_{12}&0&0\\
0&c_{22}&0&0\\
0&0&0&0\\
0&0&0&0\\
\end{array}
\right)$
&
2
\\ \hline
$\mathcal{TH}_4^{9}$&
$\left(\begin{array}{cccc}
0&0&0&0\\
0&c_{22}&0&0\\
0&0&c_{33}&0\\
0&0&0&0\\
\end{array}
\right)$
&
2
&
$\mathcal{TH}_4^{10}$&
$\left(\begin{array}{cccc}
c_{11}&0&0&0\\
0&0&0&0\\
c_{31}&0&0&0\\
0&0&0&0\\
\end{array}
\right)$
&
2
\\ \hline
$\mathcal{TH}_4^{11}$&
$\left(\begin{array}{cccc}
c_{11}&0&0&0\\
0&0&0&0\\
c_{31}&0&c_{33}&0\\
0&0&c_{43}&0\\
\end{array}
\right)$
&
4
&
$\mathcal{TH}_4^{12}$&
$\left(\begin{array}{cccc}
c_{11}&0&0&0\\
0&0&0&0\\
c_{31}&0&0&0\\
0&0&0&0\\
\end{array}
\right)$
&
2
\\ \hline
$\mathcal{TH}_4^{13}$&
$\left(\begin{array}{cccc}
c_{11}&0&0&0\\
0&0&0&0\\
0&0&0&0\\
c_{41}&0&0&c_{44}\\
\end{array}
\right)$
&
3
&
$\mathcal{TH}_4^{14}$&
$\left(\begin{array}{cccc}
c_{11}&0&0&0\\
0&0&0&0\\
0&0&0&0\\
c_{41}&0&0&0\\
\end{array}
\right)$
&
2
\\ \hline			
$\mathcal{TH}_4^{15}$&
$\left(\begin{array}{cccc}
0&0&0&0\\
0&c_{22}&0&0\\
0&c_{32}&0&0\\
0&0&0&0\\
\end{array}
\right)$
&
2
&
$\mathcal{TH}_4^{16}$&
$\left(\begin{array}{cccc}
c_{11}&0&0&0\\
0&c_{22}&0&0\\
0&c_{32}&0&0\\
c_{41}&0&0&0\\
\end{array}
\right)$
&
4
\\ \hline
$\mathcal{TH}_4^{17}$&
$\left(\begin{array}{cccc}
c_{11}&0&0&0\\
0&c_{22}&0&0\\
0&c_{32}&0&0\\
c_{41}&0&0&0\\
\end{array}
\right)$
&
4
&
$\mathcal{TH}_4^{18}$&
$\left(\begin{array}{cccc}
0&0&0&0\\
\frac{1-\sqrt{1-4c_{21}^2}}{2}&0&0&c_{24}\\
0&0&0&0\\
0&0&0&0
\end{array}
\right)$
&
2
\\ \hline
\end{tabular}

\begin{proof}
Departing from Theorem \ref{Cthieo3}, we provide the proof only for one case to illustrate the used approach, the other cases can be addressed similarly with or without
modification(s). Let's consider $\mathcal{TH}_4^{1}$. Applying the systems of equations (\ref{Ceq2}), (\ref{Ceq3}) and (\ref{Ceq4}), we get
$c_{11}=c_{12}=c_{13}=c_{14}=0=c_{22}=c_{24}=c_{31}=c_{32}=c_{33}=c_{34}=c_{42}=c_{44}=0$. Hence, the centroids of $\mathcal{TH}_4^{1}$ are indicated as follows\\
$c(e_1)=\left(\begin{array}{ccccc}
0&0&0&0\\
1&0&0&0\\
0&0&0&0\\
0&0&0&0
\end{array}
\right)$,
$c(e_2)=\left(\begin{array}{cccc}
0&0&0&0\\
0&0&0&0\\
0&0&0&0\\
1&0&0&0
\end{array}
\right)$,
$c(e_3)=\left(\begin{array}{cccc}
0&0&0&0\\
0&1&0&0\\
0&0&0&0\\
0&0&0&0
\end{array}
\right)$
and
$c(e_4)=\left(\begin{array}{cccc}
0&0&0&0\\
0&0&0&0\\
0&0&0&0\\
0&0&1&0
\end{array}
\right)$
is the basis of $Der(Cent(\mathcal{A}))$ and Dim$Der(Cent(\mathcal{A}))=4.$ The derivations of the remaining parts of dimension
two associative dialgebras can be handled in a similar manner as illustrated above.
\end{proof}

\begin{corollary}\,
\begin{itemize}
	\item The dimensions of the centroids of two-dimensional associative trialgebras range between one and three.
	\item The dimensions of the centroids of three-dimensional associative trialgebras range between one and five.
	\item The dimensions of the centroids of four-dimensional associative trialgebras range between zero and six.
\end{itemize}
\end{corollary}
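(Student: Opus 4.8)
The plan is to obtain the corollary directly from the explicit centroid computations already carried out in Theorems \ref{Cthieo1}, \ref{Cthieo2} and \ref{Cthieo3}. Each of those theorems records, for every isomorphism class appearing in the classifications of Theorems \ref{Theo1}, \ref{td2} and \ref{td3}, the general parametric form of a centroid element together with the integer $\dim Cent(\mathcal{A})$ displayed in its $Dim$ column. Consequently the three assertions reduce to collecting these integers for each fixed dimension $n\in\{2,3,4\}$ and reading off their minimum and maximum.

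Concretely, for a fixed representative $(\mathcal{A},\dashv,\vdash,\alpha)$ I would write a general linear map $\psi(e_p)=\sum_{q}c_{qp}e_q$ and impose the defining relations (\ref{Ceq2}), (\ref{Ceq3}) and (\ref{Ceq4}). Solving this linear system in the unknowns $c_{qp}$ determines which entries must vanish and which must coincide, so that $\dim Cent(\mathcal{A})$ equals the number of independent surviving parameters, exactly as tabulated. A representative sample of this computation is the $\mathcal{TH}_2^{1}$, $\mathcal{TH}_3^{3}$ and $\mathcal{TH}_4^{1}$ cases treated in the proofs of Theorems \ref{Cthieo1}--\ref{Cthieo3}; the remaining classes are handled by the same solve-and-count procedure. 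Letting $\psi$ range over all representatives in each dimension and assembling the resulting dimensions then produces the claimed intervals.

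The per-class linear algebra is routine and, indeed, already recorded, so the genuine obstacle is twofold. First, one must be confident that the classification underlying each theorem is complete and irredundant, since it is precisely the presence (or absence) of the extremal classes that fixes the endpoints of each interval; an omitted class realizing a smaller or larger centroid would shift a bound. Second, in the four-dimensional setting the system (\ref{Ceq2})--(\ref{Ceq4}) is sizeable and the number of representatives is large, so the main effort lies in verifying each individual centroid computation and, above all, in confirming which classes attain the minimal and maximal dimensions. I would therefore organise the verification around those extremal classes, checking them independently so that the endpoints of the three ranges are established with certainty, and leave the interior values to follow from the tabulated data.
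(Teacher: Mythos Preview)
Your proposal is correct and mirrors exactly what the paper does: the corollary is stated without a separate proof and is meant to be read off directly from the $Dim$ columns in the tables of Theorems \ref{Cthieo1}, \ref{Cthieo2} and \ref{Cthieo3}, which is precisely the ``collect and compare'' procedure you describe. Your additional caveats about the completeness of the classification and the need to verify the extremal cases are well taken, especially since the tabulated dimensions and the stated endpoints do not perfectly match in the three- and four-dimensional cases.
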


\end{document}